\documentclass[hidelinks,onefignum,onetabnum]{siamart220329}

\usepackage{lipsum}
\usepackage{amsfonts}
\usepackage{graphicx}
\usepackage{epstopdf}
\usepackage{algorithmic}
\usepackage{xcolor}
\usepackage{amsmath}
\usepackage{amssymb}
\usepackage{mathtools}
\usepackage{subcaption}
\usepackage{hyperref}
\hypersetup{colorlinks=true, linkcolor=black}
\usepackage{mathrsfs}

\usepackage{booktabs}   %
\usepackage{multirow} 

\usepackage{tikz,pgfplots}
\usetikzlibrary{fit}
\usepackage{color,graphicx,fancybox,fancyvrb}

\definecolor{darkgreen}{RGB}{0, 100, 0}
\definecolor{forestgreen}{RGB}{34, 139, 34}
\definecolor{ccolor}{RGB}{203,96,21}

\definecolor{matlab1}{RGB}{0  113.9850  188.9550}
\definecolor{matlab2}{RGB}{ 216.7500   82.8750   24.9900}
\definecolor{matlab3}{RGB}{  236.8950  176.9700   31.8750}
\definecolor{matlab4}{RGB}{  125.9700   46.9200  141.7800}

\definecolor{matlab5}{RGB}{    118.8300  171.8700   47.9400}

\newcommand{\R}{\mathbb{R}} 
 
\newcommand{\C}{\mathbb{C}} 
\newcommand{\N}{\mathbb{N}}

\newcommand{\0}{\mathbf{0}}
\newcommand{\1}{\mathbf{1}}
\newcommand{\psd}{\mathbb{S}}

\newcommand{\es}{\mathcal{E}}

\newcommand{\tlA}{\tilde{A}}
\newcommand{\tlB}{\tilde{B}}
\newcommand{\tlC}{\tilde{C}}
\newcommand{\tlD}{\tilde{D}}

\DeclarePairedDelimiter{\abs}{\lvert}{\rvert}
\DeclarePairedDelimiter{\norm}{\lVert}{\rVert}

\DeclarePairedDelimiter{\rank}{\textrm{rank}(}{)}

\DeclarePairedDelimiterX{\inp}[2]{\langle}{\rangle}{#1, #2}

\DeclareMathOperator*{\argmin}{arg\!\,min}

\usepackage{mathdots}

\definecolor{cn}{RGB}{93,147,191}           %
\definecolor{cz}{RGB}{233,  72, 73}      %
\definecolor{cp}{RGB}{113, 191, 110}    %
\definecolor{cf}{RGB}{157,0,255}    %
\newcommand{\cn}[1]{\textcolor{cn}{#1}}
\newcommand{\cz}[1]{\textcolor{cz}{#1}}

\newcommand{\cf}[1]{\textcolor{cf}{#1}}

\newtheorem{prob}{Problem}
\newtheorem{assum}{Assumption}
\newtheorem{example}{Example}[section]

\newcommand{\bq}{\mathbf{q}}

\newcommand{\ds}{d^*}

\newcommand{\bz}{\mathbf{z}} 

\newcommand{\bm}{\mathbf{m}} 
 
\newcommand{\bsig}{\boldsymbol{\sigma}} 
\newcommand{\blam}{\cz{\boldsymbol{\lambda}}} 
\newcommand{\Blam}{\boldsymbol{\Lambda}}

\newcommand{\nulls}{\textrm{null}}

\newcommand{\Aco}{\cn{A_{\text{core}}}}
\newcommand{\Bco}{\cn{B_{\text{core}}}}
\newcommand{\Cco}{\cn{C_{\text{core}}}}
\newcommand{\Dco}{\cn{D_{\text{core}}}}

\newcommand{\Ac}{\green{A_c}}
\newcommand{\Bc}{\green{B_c}}
\newcommand{\Cc}{\green{C_c}}
\newcommand{\Dc}{\green{D_c}}
\newcommand{\Kc}{\green{K}}

\newcommand{\Sco}{\cn{\Sigma_{\text{core}}}}
\newcommand{\linfo}{\mathcal{L}_{\mathrm{info}}}

\newcommand{\Acf}{\cf{A_{f}}}
\newcommand{\Bcf}{\cf{B_{f}}}
\newcommand{\Ccf}{\cf{C_{f}}}
\newcommand{\Dcf}{\cf{D_{f}}}

\newcommand{\tto}{\rightrightarrows}
\newcommand{\pcc}{p.c.c.}

\usepackage{tikz,pgfplots,datatool}
\usepackage[tikz]{bclogo}
\newcommand{\skipthis}[1]{}

\usepackage{arydshln}

\newcommand{\hdl}{\\ \hdashline}
\newcommand{\enu}[1]{ \begin{enumerate} #1 \end{enumerate} }

\newcommand{\mas}[2]{\left[\begin{array}{#1}#2\end{array}\right]}
\newcommand{\mat}[2]{\left(\begin{array}{#1}#2\end{array}\right)}

\newcommand{\green}[1]{{\color[rgb]{0,0.7,0}#1}}

\definecolor{yel}{rgb}{1,.9,.6}
\definecolor{lmag}{rgb}{0,0.82,1}

\definecolor{ggreen}{rgb}{0,0.7,0}
\definecolor{mag}{rgb}{0,0.7,.9}
\definecolor{magenta}{rgb}{1,0,1}
\definecolor{lgrey}{rgb}{0.85,0.85,0.85}
\definecolor{lgr}{rgb}{.9,0.9,0.9}
\definecolor{lgreen}{rgb}{0,0.92,0.7}
\colorlet{lred}{red!20!white}
\definecolor{lblue}{rgb}{1,0.95,0.7}

\definecolor{sblue}{rgb}{.1,.2,.5}
\colorlet{lsblue}{sblue!20!white}
\definecolor{sred}{rgb}{.85,.25,0}
\colorlet{lsred}{sred!20!white}
\definecolor{lye}{rgb}{1,0.95,0.7}

\usepgflibrary{fpu}
\usetikzlibrary{positioning,calc,arrows,shapes,shadows,matrix,backgrounds,external}
\tikzsetexternalprefix{figures/}

\tikzset{
auto,
poi/.style={
minimum size=0,
inner sep=0
},
sys/.style 2 args={
rectangle,
draw,
rounded corners,
drop shadow,
draw=black,
top color=black!20,bottom color=black!0,
minimum height=#2,
minimum width=#1,
inner sep=\dn},
syse/.style 2 args={
rectangle,
draw=none,
rounded corners,
minimum height=#2,
minimum width=#1,
inner sep=\dn},
nod/.style={
circle,
draw,
fill=white,
minimum size=5ex
},
sum/.style={circle,draw,draw=black,inner sep=0mm,minimum size=2mm,drop shadow,fill=white,
draw=black!100,top color=black!20,bottom color=black!0},
sume/.style={circle,draw=none,inner sep=0mm,minimum size=2mm,drop shadow,fill=white,
draw=black!100,top color=black!20,bottom color=black!0},
jun/.style={circle,draw,draw=black,inner sep=0mm,minimum size=0mm},
>={latex},
every path/.style={rounded corners},
lin/.style={color=black,draw,->},
cbox/.style n args={3}{
rectangle,
draw,
minimum width=#1,
minimum height=#2,
rounded corners=#3,
},
cobox/.style 2 args={
rectangle,
rounded corners,
draw=#1,
fill=#2,
inner sep=\dn},
}

\newcommand{\tio}[4]{\coordinate (#1) at ($(#2.south #3)!#4!(#2.north #3)$)}

\def\dn{1ex}
\def\dl{3*\dn}

\tikzstyle{sy0}=[sys={0*\dn}{0*\dn}]
\tikzstyle{sy1}=[sys={12*\dn}{8*\dn}]
\tikzstyle{sy2}=[sys={8*\dn}{6*\dn}]
\tikzstyle{sy3}=[sys={5*\dn}{5*\dn}]
\tikzstyle{sy0}=[sys={0*\dn}{0*\dn}]
\tikzstyle{sye1}=[syse={12*\dn}{8*\dn}]
\tikzstyle{sye2}=[syse={8*\dn}{6*\dn}]
\tikzstyle{sye3}=[syse={5*\dn}{5*\dn}]

\tikzstyle{sy4}=[sys={12*\dn}{6*\dn}]
\tikzstyle{sye4}=[syse={12*\dn}{6*\dn}]

\newcommand{\cl}{\prec}

\newcommand{\col}{\text{col}}

\renewcommand{\c}[1]{ {\cal #1} }

\newcommand{\diag}{ \operatornamewithlimits{diag} }

\renewcommand{\t}[1]{ \tilde{#1} }

\newcommand{\la}{\lambda}
\newcommand{\La}{\Lambda}

\newcommand{\te}[1]{\text{\ \ #1\ \ }}

\newcommand{\al}{\alpha}
\newcommand{\si}{\sigma}

\newcommand{\Acl}{{\cal A}}
\newcommand{\Bcl}{{\cal B}}
\newcommand{\Ccl}{{\cal C}}
\newcommand{\Dcl}{{\cal D}}

\newcommand{\hl}{\\\hline}

\newcommand{\eql}[2]{\begin{equation}\label{#1}#2\end{equation}}

\renewcommand{\r}[1]{(\ref{#1})}

\ifpdf
  \DeclareGraphicsExtensions{.eps,.pdf,.png,.jpg}
\else
  \DeclareGraphicsExtensions{.eps}
\fi

\newsiamremark{remark}{Remark}
\newsiamremark{hypothesis}{Hypothesis}
\crefname{hypothesis}{Hypothesis}{Hypotheses}
\newsiamthm{claim}{Claim}

\headers{Structure, Analysis, and Synthesis of Algorithms}{J. Miller \textit{et. al.}}

\title{
Structure, Analysis, and Synthesis of First-Order Algorithms
\thanks{Submitted to the editors DATE.
\funding{ J. Miller and C. Scherer are funded by Deutsche Forschungsgemeinschaft (DFG, German Research Foundation) under Germany's Excellence Strategy - EXC 2075 – 390740016. We acknowledge the support by the Stuttgart Center for Simulation Science (SimTech).
F. Jakob acknowledges the support of the International Max Planck Research School for Intelligent Systems (IMPRS-IS).
}}}

\author{Jared Miller, Carsten Scherer, Fabian Jakob, Andrea Iannelli
\thanks{ J. Miller and C. Scherer are with the Chair of Mathematical Systems Theory, Department of Mathematics,  University of Stuttgart, Stuttgart, Germany 
  (\email{jared.miller@imng.uni-stuttgart.de}, \email{carsten.scherer@imng.uni-stuttgart.de}), F. Jakob and A. Iannelli are with the Institute for Systems Theory and Automatic Control, University of Stuttgart, Germany
(\email{fabian.jakob@ist.uni-stuttgart.de}, \email{ andrea.iannelli@ist.uni-stuttgart.de}).}}

\usepackage{amsopn}

\ifpdf
\hypersetup{
  pdftitle={Structure, Analysis, and Synthesis of Algorithms},
  pdfauthor={J. Miller,  C. Scherer, F. Jakob, A. Iannelli}
}
\fi

\begin{document}

\makeatletter
\AtBeginDocument{
\def\refstepcounter@optarg[#1]#2{%
  \cref@old@refstepcounter{#2}%
  \cref@constructprefix{#2}{\cref@result}%
  \@ifundefined{cref@#1@alias}%
    {\def\@tempa{#1}}%
    {\def\@tempa{\csname cref@#1@alias\endcsname}}%
  \protected@edef\cref@currentlabel{%
    [\@tempa][\arabic{#2}][\cref@result]%
    \csname p@#2\endcsname\csname the#2\endcsname}}}      
\makeatother

\maketitle

\begin{abstract}
\label{sec:abstract}
First-order optimization algorithms can be interpreted through the lens of dynamical systems as the interconnection of linear systems and a set of subgradient nonlinearities. This dynamical systems formulation allows for the analysis and synthesis of optimization algorithms by solving robust control problems. In this work, we use the celebrated internal model principle in control theory to structurally factorize convergent composite optimization algorithms into suitable  internal models and core subcontrollers. As the key benefit, we reveal that this permits us to synthesize optimization algorithms even if information is transmitted over networks featuring dynamical phenomena such as time delays, channel memory, or crosstalk. Design of certified linearly-convergent algorithms  is achieved under bisection either through a nonconvex local search or by alternation of convex semidefinite programs. We demonstrate factorization of existing optimization algorithms and the automated synthesis of new optimization algorithms in the networked setting.

\end{abstract}
\begin{keywords}
  Optimization Algorithms, Analysis, Design, Regulation, Robust Control, Internal Model 
\end{keywords}

\begin{AMS}
90C25, %
90C90, %
93D20, %
49J40, %
  93-08, %
  93C10, %
  90C22, %
    90C46 %
\end{AMS}
\maketitle

\section{Introduction}
\label{sec:introduction}

Composite optimization problems minimize the sum of a set of functions $(f_i)_{i=1}^s$, as in $\beta^* \in \text{argmin}_\beta \sum_{i=1}^s f_i(\beta)$. These composite problems arise in  applications such as regression \cite{tibshirani1996regression}, image denoising \cite{chambolle2011first}, and  matrix completion \cite{candes2010matrix}.
First-order optimization algorithm are procedures that find an optimal point $\beta^*$ of the composite optimization problem by using gradient/subdifferential evaluations of $\partial f_i$, without needing higher order information  such as Hessians $\nabla^2 f_i$ \cite{wright1999numerical}.  First-order optimization algorithms can be understood through a system-theoretic lens \cite{wang2011control} as the interconnection between memoryless nonlinearities $\partial f_i$  and a linear system  \cite{lur1944theory, meyer1965liapunov}.

The problem of optimization algorithm design can be posed as choosing a linear time-invariant  system  to connect to the subgradient nonlinearity. The performance of an optimization algorithm can be judged by criteria such as its worst-case convergence rate to the optima $\beta^*$ (e.g. exponential convergence). 

Composite optimization problems may also be solved over networks featuring dynamics such as time delays, channels with memory, or crosstalk \cite{nedic2018network,
doostmohammadian2025survey}. Such network dynamics could cause algorithms that are nominally convergent to
diverge, or to converge to a non-optimal point. The difficulty of  optimization algorithm design  is is increased in the presence of these network dynamics.

When the given composite optimization problem is to minimize the sum of quadratics, the interconnection of $\partial f_i$ and the linear system forms an overall closed-loop linear system. The optimal iterate $\beta^*$ and its subgradients can be treated as a constant disturbance disturbing the closed-loop linear system. Algorithm convergence in the quadratic setting may then be interpreted as a disturbance rejection property of linear systems. This disturbance rejection problem for linear systems has been completely solved using regulation theory  \cite{francis1976internal, francis1977linear}: linear systems that reject disturbances must be stable, and must satisfy an affine system of equations among their state space matrices (Regulator Equation).

We will show that regulation theory also gives insights into structural properties of optimization algorithms. The search over linear systems when designing algorithms can therefore be restricted to linear systems that satisfy the relation, thus reducing the degrees of freedom in the design procedure.
Using an internal model principle \cite{francis1976internal, wonham2007towards}, we can factorize the linear system part of convergent optimization algorithms over networks into  the cascade of a core subcontroller  containing the algorithm parameters, and an internal model that depends only on the network. We show that the Regulator Equation requirement condition is independent of the specific quadratic composite optimization problem. We provide a theory of Convergence and Structure for well-posed optimization algorithms to solve composite optimization problems with unique optimal solutions.

The internal model factorization allows us to synthesize optimization algorithms over networks by solving structured robust control problems. These robust control problems arise by abstracting the subgradient nonlinearity into an uncertainty, and describing the uncertainty using the frameworks of dissipation inequalities \cite{willems1972dissipativeI, willems1972dissipativeII}.
This principled search over controllers also certifies well-posedness and exponential convergence rates of the resultant algorithm.

\textbf{Literature Review. }
The application of regulation theory and the  internal model principle towards understanding optimization algorithms is a continuation of existing unifications between control theoretic tools and optimization \cite{wang2011control, choppella2021algodynamics}.

The IQC-based the analysis of optimization algorithms originated in  \cite{lessard2016analysis} to bound the asymptotic exponential convergence rate of optimization algorithms. Input-output sequences of an oracle satisfying properties such as such as (strong) monotonicity, cocoercivity, and Lipschitzness satisfy  a family of dissipation relations/Integral Quadratic Constraints (IQCs) \cite{megretski2002system}. When the oracle is the subdifferential of a convex function, the subgradient inequality induces an infinite set of dynamic O`Shea-Zames-Falb multipliers \cite{zames1968stability}, each of which expresses a dissipation relation that is valid for all compatible oracles. 
IQC theory was used to derive the Triple Momentum scheme \cite{van2017fastest}, which was proven to be optimal among all first-order and fixed-stepsize algorithms for strongly convex optimization in \cite{taylor2023optimal}.
The IQC-analysis method in \cite{lessard2016analysis, lessard2022analysis} designed optimization algorithms based on a hyperparameter search: choose algorithm parameters such that the IQC-derived programs yield a minimal convergence rate.

In the case of a single oracle (e.g. unconstrained optimization), the single-input-single-output structure of the controller allows for convex synthesis of the algorithm and the filter coefficients \cite{holicki2021algorithm, scherer2021convex, scherer2023optimization}.
Extensions of IQC analysis and design beyond static optimization algorithms include smooth games \cite{zhang2021unified}, parameter-varying optimization \cite{jakob2025linear}, switched optimization algorithms \cite{miller2025analysis}, saddle-point algorighms \cite{gramlich2022synthesis} and online optimization \cite{jakob2025online}.

The work in \cite{wu2024tannenbaum, zhang2024frequency, wu2025frequency} use frequency-domain methods to synthesize optimization algorithms in the absence of network dynamics. The Consensus and Optimality properties are interpreted as tangential  interpolation constraints of transfer function matrices \cite{ball2013interpolation}. 
Nevalinna-Pick interpolation theory \cite{pick1915beschrankungen} is then used to generate algorithms that satisfy the Consensus and Optimality constraints. This frequency-domain  framework is extended to equality-constrained algorithms in \cite{ozaslan2025automated} and distributed optimization algorithms in \cite{zhang2024frequency}.

The Performance Estimation Problem (PEP) approach is an alternative method for the analysis and synthesis of optimization algorithms \cite{drori2014performance}. The PEP approach is based on finding conditions for which a convex function interpolates a set of given iterates, function values, and subdifferentials.
Interpolation constraints can be used in asymptotic \cite{taylor2018lyapunov, van2023automated, upadhyaya2025automated} and finite-horizon \cite{taylor2017smooth} analysis of optimization algorithms. Increasing the number of points considered in the interpolation problem reduces conservatism of the developed analysis.
Optimal variable-step and fixed-step rules for unconstrained gradient descent of smooth convex functions were synthesized using these interpolation constraints in \cite{taylor2023optimal}.

\textbf{Contributions and Paper Structure. }
The contributions of this work are:
\begin{itemize}
    \item Section \ref{sec:convergence}: A two-part condition for convergence of optimization algorithms (Robust Stability, Regulator Equation). 
    \item Section \ref{sec:structure}: A structural state-space factorization of all convergent well-posed first-order optimization algorithms based on the internal model. 
    \item Section \ref{sec:synthesis}: An analysis and  synthesis framework for certifying and generating optimization algorithms over networks based on linear matrix inequalities. 
    \item Section \ref{sec:examples}: Demonstration of algorithm synthesis in the dynamical   setting.
\end{itemize}

Table \ref{tab:main_results} lists the main theoretical contributions of the paper.

\vspace{-3mm}
\begin{table}[h]
    \centering
    \caption{Main results of the paper}
    \vspace{-2mm}
    \begin{tabular}{ll}
        \toprule
         Lemma \ref{lem:well_posed} &  Sufficient condition  for well-posedness of  algorithms \\
         Theorem \ref{thm:convergence} & Two-part condition for convergence of  algorithms \\
         Theorem \ref{thm:main_structure} & Internal-Model-Principle-based factorization of algorithms \\
         Theorem \ref{thm:analysis} & LMI-based convergence rate analysis of well-posed algorithms \\
         Prop. \ref{prop:converge_full} & LMI-based synthesis of certified  convergent algorithms\\
         \bottomrule
    \end{tabular}
    \label{tab:main_results}
\end{table}

Section \ref{sec:preliminaries} reviews preliminaries of notation, linear systems, convex analysis, and optimization algorithms. Sections \ref{sec:convergence}, \ref{sec:structure}, \ref{sec:synthesis}, and \ref{sec:examples} give the main results as described in the above table. Section \ref{sec:conclusion} concludes the paper.

\section{Preliminaries}
\label{sec:preliminaries}

We first review preliminaries of notation, linear systems, and optimization algorithms.

\subsection{Notation and Linear Algebra}

The set of natural numbers including zero is $\N$. The set of integers between $a$ and $b$ inclusive is $\{a, \ldots, b\}.$
The extended real line is $\bar{\R} = \R \cup \{-\infty, \infty\}$. The $n$-dimensional real Euclidean space  is $\R^n$. 
Its positive orthant is $\R^n_{>}:=\{(x_1,\ldots,x_n)\in\R^n\mid x_i>0\ \forall i\in\{1,\ldots,n\}\}$.

The set of $n$-dimensional symmetric matrices is $\psd^n$. An empty matrix will be denoted as $[\cdot]$, and an empty set as $\varnothing$.
If $M$ is square, the
symmetric part of $M \in \R^{n \times n}$ is $\text{Sym}(M) = \frac{1}{2} (M + M^\top)$.
The spectral radius of a matrix $M$ is $\rho(M)$.
A square matrix $M$ is Schur if $\rho(M)<1$.
If $M$ is symmetric, its  minimal and maximal eigenvalues are
$\lambda_{\min}(M)$ and\ $\lambda_{\max}(M)$, respectively.

The all zeros matrix is $0_{m \times n}$, the all ones vector is $\1_s$, and the identity matrix is $I_s$. 
The diagonal matrix defined by a vector $v$ is $\mathrm{diag}(v)$.
The block diagonal concatenation of a set of matrices is  $\text{blkdiag}(\cdot, \ldots, \cdot)$.

\subsection{Linear Systems}\label{Ssys}
A discrete time signal $x: \N \rightarrow \R^n$ is a time-indexed sequence $(x_k)_{k \in \N}.$
The unit shift operator $\bz$ acts on a sequence $x$ as $\bz x  = \bz (x_{k})_{k \in \N} = (x_{k+1})_{k \in \N}$ and $(\bz x)_{0} = 0$.
A linear system $G$ is a  mapping from an input signal $u: \N \rightarrow \R^{n_u}$ and an initial state $x_0 \in \R^n$ to an output signal $y: \N \rightarrow \R^{n_y}$ described with matrices $(\Acl, \Bcl, \Ccl, \Dcl)$ in the state-space as
\begin{align}
\label{eq:linear_system}
   & & G:  \mat{c}{x_{k+1} \hl y_k} =
    \mat{c|c}{\Acl & \Bcl \hl \Ccl &\Dcl} \mat{c}{x_k \hl u_k}, \; \text{ abbreviated as } \; y & = \mas{c|c}{        \Acl & \Bcl \hl \Ccl &\Dcl
     } u.
\end{align}
{The  transfer matrix of the system $G$ is 
\mbox{$T(\bz) = \Ccl (\bz I - \Acl)^{-1} \Bcl + \Dcl$} for $\bz\in\C$. If a given rational function $T(\bz)$
is represented as $T(\bz) = \Ccl' (sI - \Acl')^{-1} \Bcl' + \Dcl'$, the tuple $(\Acl', \Bcl', \Ccl', \Dcl')$
is a realization of $T(\bz)$. Realizations are generally nonunique.}

The cascade of  linear systems $G_1$ and $G_2$  cascade is $G_2 G_1$, and their block diagonal is $\text{blkdiag}(G_1,  G_2)$. 
The well-posed feedback interconnection between systems $G_1: (x_0^1, (w, u)) \rightarrow (z, y)$ and $G_2: (x_0^2, (y, a)) \rightarrow (u, b)$ along their shared signals $(u, y)$  is the \textit{star product} $G_1 \star G_2: ((x_0^1, x_0^2), (w, a)) \rightarrow (z, b)$. 
Well-posedness ensures that the shared signals $(u, y)$ are unique given $(x^1, x^2, w, a)$, and this is {true iff} $I - \Dcl_2 \Dcl_1$ is invertible \cite{zhou1998essentials}.

{
The representation $(\Acl, \Bcl, \Ccl, \Dcl)$ of $G$ is stable if $\Acl$ is Schur.
This holds iff all trajectories of \eqref{eq:linear_system} for $u=0$ and any $x_0\in\R^n$
satisfy $\lim_{k \rightarrow \infty} x_k = 0$.
We will refer to controllability (stabilizability) and observability (detectability) of the realization \eqref{eq:linear_system} if $(\Acl,\Bcl)$ is controllable (stabilizable) or $(\Acl,\Ccl)$ is observable (detectable) according to the standard definition \cite{zhou1998essentials}.
A {realization} $(\Acl, \Bcl, \Ccl, \Dcl)$ is minimal
{if it is} controllable and observable.

{
Let us finally recall that a controlled interconnection is described by
\begin{align}
\label{GP}
    P:& & \qquad  \mat{c}{
    x_{k+1}  \hl z_k \\ y_k
    } & = \mat{c|cc}{ A & B_1 & B_2 \hl
    C_1 & D_1 & D_{12} \\
    C_2 & D_{21} & D_2 }\mat{c}{
        x_k \hl w_k \\ u_k
    }, \\
        \Kc: & & \qquad  \mat{c}{
        \xi_{k+1}  \hl u_k} & =\mat{c|c}{\Ac & \Bc \hl
    \Cc & \Dc}\mat{c}{
        \xi_k \hl y_k
    }.
\end{align}
Here $P$ is the plant and $\Kc$ is a controller. They share the control input $u$ and the measurement output $y$.
This special star product $P \star K$ is well-posed if $E:=I - D_2 \Dc$ is invertible. Well-posedness permits elimination of the
common signals $(u, y),$ to arrive at the description
\vspace{-2mm}
\begin{equation}\arraycolsep.5ex
\mat{c}{x_{k+1}\\\xi_{k+1}\hl z_k}=
\mat{c|c}{\Acl&\Bcl\hl\Ccl&\Dcl}
\mat{c}{x_k\\\xi_k\hl w_k}, \label{cll}
\end{equation}
in which the above calligraphic matrices are defined according to 
\begin{equation}\label{eq:LFT}\arraycolsep.5ex
    \mat{c|c}{\Acl&\Bcl\hl\Ccl&\Dcl} \! := \! \mat{cc|cc}{
A  & 0 &   B_1  \\
0  & 0 &   0    \hl
C_1& 0 &  D_1   }
+
\mat{ccc}{
 0 & B_2    \\
 I & 0      \hl
 0 & D_{12} }
 \!
 \mat{ccc}{
\Ac   +\Bc E^{-1}D_2\Cc   &\Bc    E^{-1} \\
(I +\Dc E^{-1}D_2)\Cc     &\Dc    E^{-1} }
\!
\mat{cc|cc}{
0  & I &   0       \\
C_2& 0 &   D_{21}  }.
\end{equation}
We refer to the interconnection in \eqref{eq:LFT} as $P\star\Kc$, with the tacit understanding that it is well-posed and
described  with the matrices in \r{cll}. 
Moreover, $\Kc$ internally stabilizes $P$ if $P\star \Kc$ is well-posed and if $\Acl$ is Schur stable.
There exists a controller $\Kc$ which internally stabilizes $P$ iff $(A,B_2)$ is stabilizable and $(A,C_2)$ is detectable.
}

\subsection{Function Class}\label{Sconana}

A function that is proper, convex, and closed will be referred to as
{being} \pcc \
For $m \geq 0$ and $L > m$, we consider the class $\mathcal{S}_{m, L}$,
consisting of all $f: \R^c \rightarrow \bar{\R}$ {such that} $f - m \mathbf{q}$ and $L\bq -f$ {is} \pcc, where $\bq := \tfrac{1}{2} \| \cdot \|^2$.
If $L<\infty$, the class $\mathcal{S}_{m,L}$ contains all $m$-convex and $L$-smooth functions.
For $L=\infty$, this is} an extension of existing definitions in \cite{scherer2023robustozf, rotaru2024exact} to the case where $f$ is extended real-valued, and to  \cite{freeman2018noncausal, gramlich2022synthesis} when $f$ is nonsmooth.

The indicator function {of} a set $\mathcal{Z} \in \R^c$ {is defined as $I_{\mathcal{Z}}(x) = 0$ if $x \in \mathcal{Z}$ and $I_{\mathcal{Z}}(x) = \infty$ if $x \not\in \mathcal{Z}$}.
As an example, i
If $\mathcal{Z}\neq\varnothing$ is closed and convex, $I_{\mathcal{Z}} \in \mathcal{S}_{0, \infty}.$

\textbf{Subdifferentials.}
{The subdifferential of a \pcc\ function $f$ 
is defined as
\begin{equation}
    \partial f(x) = \{g \in \R^c \mid f(x') \geq f(x) + g^\top (x' - x)\ \forall x'\in\R^c\}. \label{eq:subdifferential}
\end{equation}

Given a function $f \in \c{S}_{m, L}$, the \pcc function $f - m \bq$ has a subdiffential $\partial (f - m \mathbf{q})$.

The subdifferential is extended to a function $f\in\c{S}_{m,L}$ with the possibility of $m<0$ by  
$\partial f := \partial (f - m \mathbf{q}) + m I$ (Frech\'et  subdifferential\cite[Definition 6.2, (iii)]{upadhyaya2025autolyap} using  \cite[Proposition 1.107 (i)]{mordukhovich2006variational}).
$\mathcal{S}_{m, L}^0$ is the subset of functions $f \in \mathcal{S}_{m, L}$ which are zero-centered as $0=f(0)$ and $0 \in \partial f(0)$.
As examples, $\partial I_{\mathcal{Z}}$ is the normal cone operator {for} $\mathcal{Z}$,
and $\partial f(x)=\{\nabla f(x)\}$ holds for all $x\in\R^c$ if $f\in\c{S}_{m,L}$ and $L<\infty$.
The identity function
$I:\R^c \rightarrow \R^c$, $I(x)=x$, satisfies $I= \partial \bq$.}

\textbf{Operators. }
{
An operator is a set-valued map $F: \R^c \rightrightarrows \R^c$ \cite{rockafellar2009variational, bauschke2020hilbert}. Its inverse is defined as $F^{-1}(x):=\{y\in\R^c\mid x\in F(y)\}$ for $x\in\R^c$.
A matrix $D\in\R^{c\times c}$ is identified with the linear map $x\mapsto Dx$ for any $x \in \R^c$.
In this paper, $(I + D F)^{-1}$ is referred to as a generalized resolvent, and $(F^{-1}-D)^{-1}$ is referred to as the generalized Yosida operator.
$F$ is said to {be} globally single-valued if, for every $x\in\R^c$, $F(x)$ consists of exactly one element. $F$ is said to be globally continuous if it is globally single-valued
and if the map $F:\R^c\to\R^c$, $x\mapsto F(x)$ is continuous.

If  $f\in\c{S}_{m,L}$ then $\partial f:\R^c\rightrightarrows\R^c$ is a set-valued map.
If $m>0$, then $f$ is strongly convex, which implies that $(\partial f)^{-1}$ is globally continuous.
The set of subdifferentials to $\mathcal{S}_{m, L}$ is $\partial \mathcal{S}_{m, L} = \{\partial f \mid {f \in \mathcal{S}_{m, L}}\}.$ We refer to Appendix \ref{app:extended_subdiff} for further details.}

\subsection{Composite Optimization Problems}

{Given $s\in\N$ functions $f_i\in \c{S}_{m_i, L_i}$, $i\in\{1,\ldots,s\}$,}
a composite optimization problem {amounts to finding} 
\vspace{-2mm}
\begin{align}
    \beta^* \in \argmin_{{\beta\in\R^c}} \sum_{i=1}^s f_i(\beta).  \label{prob:composite0}
\end{align}

The following zero-inclusion problem is a candidate necessary optimality condition.
\begin{prob}[Composite Optimization Principle]
\label{prob:composite}
    {Given $s\in\N$ functions $f_i\in \c{S}_{m_i, L_i}$ for $i\in\{1,\ldots,s\}$,} find a point {$\beta^*\in\R^c, w^* \in \R^{sc}$} which satisfies
    \vspace{-2mm}
\begin{align}
\textstyle \sum_{i=1}^s w^*_i &= 0, & \forall i \in \{1, \ldots, s\}: & \ w^*_i \in \partial f_i(\beta^*).
\label{eq:composite}
\end{align}
\end{prob}

In this work, we consider composite optimization problems that obey  the following restrictions:
\begin{enumerate}
    \item The overall problem is strongly convex ($\sum_{i=1}^s m_i > 0$).
    \item There is a unique pair $(\beta^*, w^*)$ such that \eqref{eq:composite} is satisfied.    
\end{enumerate}

Satisfaction of the principle in \eqref{eq:composite} is necessary for a point $\beta^*$ to be optimal for the optimization problem in \eqref{prob:composite0}. This necessity is also sufficient due to the imposition of strong convexity. 

An example of composite optimization subject to these restrictions arises {from} constrained minimization
 of a function $f \in \mathcal{S}_{m_f, L_f}$ with $0 < m_f < L_f<\infty$ on a closed convex set {$\mathcal{Z}\subset\R^c$}.
 {If we choose} $f_1=f$ and $f_2=I_\mathcal{Z}\in\c{S}_{0,\infty}$, {we infer}  that $f_1(x)+f_2(x)=f(x)<\infty$ holds iff $x\in\mathcal{Z}$, and hence \r{prob:composite0} is equivalent to $\beta^*\in\argmin_{x\in\c{Z}} f(x)$. The set $\partial f_1(\beta)$  is single-valued for all $\beta \in \R^c$ because $L_f < \infty$, and therefore the pair $(\beta^*, w^*)$ is unique with $w^* = (\partial f(\beta^*), -\partial f(\beta^*)). $.

{
{

With the function $f(z):=f_1(z^1)+\cdots +f_s(z^s)$ for $z=(z^1,\ldots,z^s)\in\R^{sc}$, the optimality condition
\eqref{eq:composite} reads as $0\in(\1_s\otimes I_c)^\top F(\1_s\otimes\beta^*)$ involving the operator $F:=\partial f$.

In this paper, we work with the following classes of operators.
\begin{definition}
    Given vectors  $m=(m_1,\ldots,m_s)$ and $L=(L_1,\ldots,L_s)$ with $\sum_{i=1}^s m_i > 0$, {let $\c{O}_{m,L}$ be} the class of all operators $F$   
    for which \eqref{eq:composite} has a unique solution $(\beta^*, w^*)$.
    
Moreover, $\mathcal{O}^0_{m, L}$ is the subset of all operators $F \in \mathcal{O}_{m, L}$ satisfying
     $f_i(0)=0$ for $i \in \{1, \ldots, s\}$ and $0\in F(0)$.
    
\end{definition}
}

\subsection{Optimization Algorithms}

Time-independent iterative schemes  to solve Problem \ref{prob:composite} can be expressed as a linear time-invariant system {$G$ represented by $(\Acl, \Bcl, \Ccl, \Dcl)$} interconnected with the operator
{$F\in\mathcal{O}_{m, L}$ %
as}
\begin{equation}
\label{eq:algorithm}
   \mat{c}{
        x_{k+1}  \hl z_k
    } = \mat{c|c}{ \mathcal{A} & \mathcal{B} \hl
    \mathcal{C} &  \mathcal{D}}\mat{c}{
        x_k \hl w_k
    },  \qquad   w_k \in F(z_k).
\end{equation}
{This feedback interconnection is depicted as a block-diagram shown in Figure \ref{fig:algorithm}.}

\begin{figure}[h]
    \centering
    \begin{tikzpicture}[xscale=1,yscale=1,baseline=(ko1)]
\def\dl{2*\dn}
\def\ds{3*\dn}
\node[sy3] (g) at (0,0)  {
$
F
$};
\tio{i2}{g}{east}{1/2};
\tio{o2}{g}{west}{1/2};

\node[sy3,below=\dl of g] (k) {
$
\mas{c|c}{\Acl & \Bcl \hl \Ccl & \Dcl } 
$
};

\tio{ki1}{k}{east}{1/2};
\tio{ko1}{k}{west}{1/2};
\draw[<-] (ko1)--   ($(ko1) + (-2*\ds, 0)$)  |- node[pos=.25]{$w$} (o2) ;
\draw[->] (ki1)--  ($(ki1) + (2*\ds, 0)$) |- node[pos=.25,swap]{$z$} (i2);
\end{tikzpicture}
    \caption{Standard algorithmic {inter}connection in \eqref{eq:algorithm}}
    \label{fig:algorithm}
\end{figure}
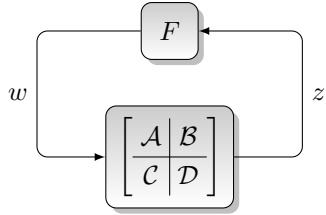
\vspace{-2mm}

As an example, the  Forward-Backward algorithm $x_{k+1} = (I + \gamma \partial f_2)^{-1}(x_k - \gamma \nabla f_1(x_k))$ may be {represented} as
\begin{align}
\label{eq:algorithm_gd}
   \mat{c}{
        x_{k+1}  \hl z_k^1 \\ z_k^2
    } = \left[ \mat{c|cc}{1 & -\gamma & -\gamma \hl 1 & 0 & 0 \\ 1 & -\gamma & -\gamma} \otimes I_c \right] \mat{c}{
        x_k \hl w_k^1 \\ w_k^2
    }, \qquad  \mat{c}{w_k^1 \\ w_k^2} = \mat{c}{\nabla f_1 (z_k^1) \\ \partial f_2(z_k^2)}.
\end{align}

{The Forward-Backward  algorithm exhibits a Kronecker Structure \cite{michalowsky2021robust}}:
\begin{definition}
    A system in \eqref{eq:algorithm} has \textbf{Kronecker Structure} if there exists matrices $(\Acl^a, \Bcl^a, \Ccl^a, \Dcl^a)$ with
    \label{def:kronecker}
    \begin{align}
         \mat{c|c}{ \mathcal{A} & \mathcal{B} \hl
    \mathcal{C} &  \mathcal{D}} =  \mat{c|c}{ \mathcal{A}^a & \mathcal{B}^a \hl
    \mathcal{C}^a &  \mathcal{D}^a} \otimes I_c. \label{eq:kronecker}
    \end{align}
\end{definition}
{We will use the superscript $a$ to denote matrices that {have a Kronecker structure
(e.g. $\Acl^a$ with  $\Acl^a \otimes I_c = \Acl$ in \eqref{eq:kronecker})}. Furthermore, we will use the Kronecker product to denote Kronecker-structured linear systems, using the notation
\begin{align}
\label{eq:linear_system_kron}
   & & G = G^a \otimes I:  \mat{c}{x_{k+1} \hl y_k} & =
    \left(\mat{c|c}{\Acl^a & \Bcl^a \hl \Ccl^a &\Dcl^a} \otimes I_c \right) \mat{c}{x_k \hl u_k}, \\
    &\text{abbreviated as} &  y & = \left(\mas{c|c}{        \Acl & \Bcl \hl \Ccl &\Dcl
     } \otimes I_c \right) u.
     \end{align}

{The following definition of a fixed point of an algorithm is standard.}

\begin{definition}
A \textbf{fixed point} of \eqref{eq:algorithm} %
is a  tuple $(x^*, z^*, w^*)$ satisfying
\begin{align}
   \mat{c}{
        x^*  \hl z^*
    } = \mat{c|c}{ \mathcal{A} & \mathcal{B} \hl
    \mathcal{C} &  \mathcal{D}}\mat{c}{
        x^* \hl w^*
    }, \qquad   w^* \in F(z^*). \label{eq:fixed_point}
\end{align}
\label{defn:fixed_point}
\end{definition}
\vspace{-2mm}
{This leads us to the following notion of algorithm convergence used in this paper.}

\begin{definition}
 The algorithmic interconnection  in \eqref{eq:algorithm} with $F \in \mathcal{O}_{m, L}$ is a \textbf{convergent optimization algorithm} for Problem \ref{prob:composite} with solution $\beta^*$ if there exists a fixed point $(x^*, w^*, z^*)$  of \eqref{eq:algorithm} {which satisfies the
 following three properties:}
\begin{subequations}
\begin{align}
\text{Consensus:}  \qquad &z^* =  \1_s \otimes \beta^*, \label{eq:reg_property} \\
\text{Optimality:}  \qquad & (\1_s \otimes I_c)^\top w^* = 0, \label{eq:reg_zero}\\
    \text{Attractivity:}  \qquad &\forall x_0 \in \R^n: \lim_{k \rightarrow \infty} (x_k, w_k, z_k) = (x^*, w^*, z^*).  \label{eq:least_convergence}
\end{align}
\label{eq:optim_requirements}
\end{subequations}
\label{defn:optimization_algorithm}
\end{definition}

\vspace{-6mm}
\begin{remark}

A sufficient condition for a pair $(\beta^*, w^*)$ solving Problem \ref{prob:composite} to be unique is if at most one index $i \in \{1, \ldots, s\}$ satisfies $L_i = \infty.$
If  $(\Acl, \Ccl)$ is detectable, then uniqueness of $x^*$ follows, because there is at most one $x^*$ such that
\begin{align}
\mat{c}{\Acl - I \\ \Ccl} x^* = \mat{c}{-\Bcl w^* \\ \1_s \otimes \beta^* - \Dcl w^*
}.
\end{align}

\end{remark}

\subsection{Algorithms over Networks}\label{Snet}

The  algorithm \eqref{eq:algorithm} involves a linear system $G$ directly interconnected to the operator oracle $F$. Design of optimization algorithms may also occur in settings where the oracle $F$ is not directly accessible. Example instances include time delays, packet drops, and network cross-talk \cite{doostmohammadian2025survey}.
{We model this setting by assuming that $G$ is given by the interconnection of a network 
\begin{align}
\label{eq:network_dynamics}
    P: & & \mat{c}{
    x^N_{k+1}  \hl z_k \\ y_k
    } &= \mat{c|cc}{ A & B_1 & B_2 \hl
    C_1 & D_1 & D_{12} \\
    C_2 & D_{21} & D_2 }\mat{c}{
        x^N_k \hl w_k \\ u_k
    },
\intertext{with a controller $\Kc$ with $n_y$ inputs and $n_u$ outputs as described by}
        \Kc: & & \mat{c}{
        \xi_{k+1}  \hl u_k}&=\mat{c|c}{\Ac & \Bc \hl
    \Cc & \Dc}\mat{c}{
        \xi_k \hl y_k
    }. \label{eq:controller}
\end{align}
If $E:=I - D_2 \Dc$ is invertible, we recall that this interconnection is well-posed
and
$P\star\Kc$
admits a state-space description with $(\Acl,\Bcl,\Ccl,\Dcl)$ as in \eqref{cll}.

For $G=P\star\Kc$, the algorithmic {inter}connection $F\star G=F\star (P\star\Kc)$
is visualized by block-diagrams in Figure \ref{fig:alPetwork}. These reveal how the (given and fixed)
network $P$ bridges the oracle $F$ and the (to be designed controller) $\Kc$ to generate the
algorithm $F\star (P\star\Kc)$. The specific choice
\begin{align}
            P^0: & & \mat{c}{
        z_k \\ y_k
    } &= \mat{cc}{0 & I_{cs} \\ I_{cs} & 0} \mat{c}{
        w_k \\ u_k
    } \label{eq:direct_interconnection}
\end{align}
recovers a direct interconnection, since then $P^0 \star \Kc=\Kc$ and $G$ is just equal to $\Kc$.}

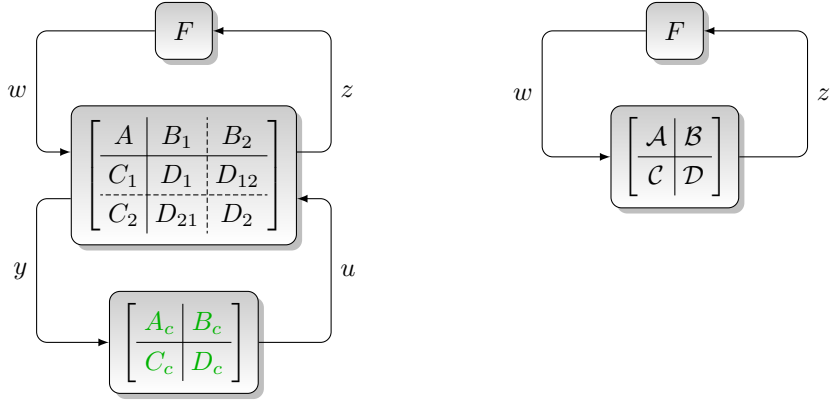
\begin{figure}[h]
    \centering
    \hfill
    \begin{subfigure}[T]{0.4\textwidth}
    \begin{tikzpicture}[xscale=1,yscale=1,baseline=(ko1)]
\def\dl{2*\dn}
\def\ds{3*\dn}
\node[sy3] (F) at (0,0)  {
$F$};
\tio{iF}{F}{east}{1/2};
\tio{oF}{F}{west}{1/2};

\node[sy3,below=\dl of F] (N) {
$
\mas{c|c:c}{ A & B_1 & B_2 \hl C_1 & D_1 & D_{12} \hdl C_2 & D_{21} & D_{2} } 
$
};

\node[sy3,below=\dl of N] (k) {
$
\mas{c|c}{\Ac & \Bc \hl \Cc & \Dc } 
$
};

\tio{ki1}{k}{east}{1/2};
\tio{ko1}{k}{west}{1/2};

\tio{Ni1}{N}{west}{1/3};
\tio{No1}{N}{east}{1/3};

\tio{Ni2}{N}{west}{2/3};
\tio{No2}{N}{east}{2/3};
\draw[<-] (Ni2)--   ($(Ni2) + (-1*\ds, 0)$)  |- node[pos=.25]{$w$} (oF) ;
\draw[->] (No2)--  ($(No2) + (1*\ds, 0)$) |- node[pos=.25,swap]{$z$} (iF);

\draw[->] (Ni1)--   ($(Ni1) + (-1*\ds, 0)$)  |- node[pos=.25, swap]{$y$} (ko1) ;
\draw[<-] (No1)--  ($(No1) + (1*\ds, 0)$) |- node[pos=.25]{$u$} (ki1);
\end{tikzpicture}
    \end{subfigure}
    \begin{subfigure}[T]{0.4\textwidth}
    \begin{tikzpicture}[xscale=1,yscale=1,baseline=(ko1)]
\def\dl{2*\dn}
\def\ds{3*\dn}
\node[sy3] (g) at (0,0)  {
$
F
$};
\tio{i2}{g}{east}{1/2};
\tio{o2}{g}{west}{1/2};

\node[sy3,below=\dl of g] (k) {
$
\mas{c|c}{\Acl & \Bcl \hl \Ccl & \Dcl } 
$
};

\tio{ki1}{k}{east}{1/2};
\tio{ko1}{k}{west}{1/2};
\draw[<-] (ko1)--   ($(ko1) + (-2*\ds, 0)$)  |- node[pos=.25]{$w$} (o2) ;
\draw[->] (ki1)--  ($(ki1) + (2*\ds, 0)$) |- node[pos=.25,swap]{$z$} (i2);
\end{tikzpicture}
    \end{subfigure}
     \hfill
    \caption{Block diagrams of an algorithm over a network {given by} $F \star (P \star \Kc)$ (left) and its closed-loop representation  $F \star G$ (right) with $G = P \star K$ {represented by $(\Acl, \Bcl, \Ccl, \Dcl)$.}}
    \label{fig:alPetwork}
\end{figure}

{Algorithm synthesis involves the design of a controller $\Kc$ such that the interconnection of $F$ and $P \star \Kc$ is a convergent optimization algorithm whenever $F\in \mathcal{O}_{m, L}$.}
The goal of this paper {is to} derive structural conditions on any controller $\Kc$ {which solves this synthesis problem. Deriving these conditions  restricts a } search over all controllers $\Kc$ to only  those the structure-obedient controllers.

\section{Well-Posedness and Algorithm Execution}

\label{sec:execution}

{

The algorithm \eqref{eq:algorithm} involves the implicit constraint $z_k=\c{C}x_k+\c{D}w_k$, $w_k\in F(z_k)$. This translates into the equivalent inclusions $\c{C}x_k+\c{D}w_k\in F^{-1}(w_k)$ and $w_k\in (F^{-1}-\c{D})^{-1}(\c{C}x_k)${, which motivates the following definition of well-posedness.}
\begin{definition}
    The algorithm   \eqref{eq:algorithm} is \textbf{well-posed} if {the generalized Yosida operator $H:= (F^{-1} - \mathcal{D})^{-1}$ is globally continuous.} \label{def:well_posed}
\end{definition}

Well-posed interconnections \eqref{eq:algorithm} can be rewritten as an explicit recursion
     \begin{align}
      x_{k+1} = \mathcal{A} x_k + \Bcl H (\Ccl x_k), \ \ w_k=H (\Ccl x_k),\ \
      z_k=\c{C}x_k+\c{D}H(\c{C}x_k) \ \ \forall k\in\N.   \label{eq:algorithm_causal}
     \end{align}
{Well-posedness implies that trajectories $(x_k, w_k, z_k)_{k \in \N}$ exist and are unique for every $x_0 \in \R^n$, and that convergence in $x$ implies convergence in $(w, z)$ (Proposition \ref{prop:well_posed_converge}).

Our developed  convergence and structural results require well-posedness of algorithms. In the sequel, we refer to the algorithm as $F \star G$ (represented by \eqref{eq:algorithm}), with the
tacit understanding that it is well-posed (and can also be expressed as \eqref{eq:algorithm_causal}).}

Unfortunately, {the evaluation of $H$} may be as challenging as solving the original optimization problem \eqref{eq:composite}. 
{If $\Dcl$ admits a lower-triangular block structure (possibly after
a permutation of $f_1,\ldots,f_s$), the evaluation of $H$ can be achieved with the generalized resolvents of
$\partial f_i$ for $i\in\{1,\ldots,s\}$. To perform this algorithm evaluation, we partition $\c{B}=(\c{B}_i)$, $\c{C}=(\c{C}_i)$ and $\c{D}=(\c{D}_{ij})$  from \eqref{eq:algorithm} into $c\times c$ blocks to form the representation }
\begin{align}
\mat{c|c}{\Acl & \Bcl \hl \Ccl & \Dcl} &= \mat{c|cccc}{
   \Acl & \Bcl_1 & \Bcl_2  & \hdots & \Bcl_s \hl
   \Ccl_1 & \Dcl_{11} & 0 &   \hdots & 0\\
   \Ccl_2 & \Dcl_{21 } & \Dcl_{22} &   \hdots & 0\\
   \vdots & \vdots & \vdots  & \ddots & \vdots\\
   \Ccl_s & \Dcl_{s1} & \Dcl_{s2} & \hdots & \Dcl_{ss}}. \label{eq:tri_partition}
\end{align}

The block-sparsity pattern of $\mathcal{D}$ defines an \textit{information structure} of the algorithm, specifying the coordinate values $w^{j}_k$ of the subdifferentials that may be used to compute the other subdifferential coordinates $w^i_k$ within the same iteration $k \in \N$. We encode this block-sparsity pattern into a 

Descriptions and labels for the eight possible sparsity patterns of the  block-lower-triangular  matrices $  \mathcal{D} $ for $s=2$ are listed in Table \ref{tab:info_structure}, in which $\bullet$ indicates a possibly nonzero $\Dcl_{ij}$ submatrix in \eqref{eq:tri_partition}.

\begin{table}[!h]
    \centering
     \caption{Block-lower-triangular sparsity patterns of $\mathcal{D}$ for  $s=2$.}
    \begin{tabular}{r|c | c}
          & Sequential & Parallel  \\
         Pure Yosida & $\mat{cc}{\bullet & 0 \\ \bullet & \bullet}$ & $\mat{cc}{\bullet & 0 \\ 0 & \bullet}$ \\
         Mixed Yosida and Gradient& $\mat{cc}{0 & 0 \\ \bullet & \bullet}, \quad  \mat{cc}{\bullet & 0 \\ \bullet & 0}$ &  $ \mat{cc}{0& 0 \\ 0 & \bullet}, \quad   \mat{cc}{\bullet & 0 \\ 0 & 0}$ \\
         Pure Gradient &  $\mat{cc}{0 & 0 \\ \bullet & 0}$  & $\mat{cc}{0 & 0 \\ 0 & 0}$
    \end{tabular}
    \label{tab:info_structure}
\end{table}

In Table \ref{tab:info_structure}, an algorithm is `parallel' if all $w^i$ coordinates for the subgradient can be evaluated independently (block-diagonal $\mathcal{D}$), or `sequential' if there are dependencies in  $w^i$ computation.

\begin{lemma}\label{Lwp} \label{lem:well_posed}
An algorithm \eqref{eq:algorithm} with $\Dcl$ having a block-lower-triangular structure as partitioned in \eqref{eq:tri_partition} is well-posed if the  diagonal blocks $\c{D}_{ii}$ satisfy
\eql{wpc}{%
\mathrm{Sym}(- \si_i + \mathcal{D}_{ii} (I - m_i \mathcal{D}_{ii})^{-1}) \prec 0
\te{with}\si_i:=(L_i-m_i)^{-1}\te{for}i\in\{1,\ldots,s\}.
}
We define the generalized Yosida operators $H_i := (\partial f_i^{-1} - \Dcl_{ii})^{-1}$ for each $i \in \{1, \ldots, s\}$.
The recursion in \eqref{eq:algorithm_causal} admits the explicit description starting from an initial condition $x_0 \in \R^n$ as {follows: Determine
}
\begin{equation}
\label{eq:algorithm_causal_eval}
w_{k}^i = H_i\left(\Ccl_{i} x_{k} + \textstyle\sum_{j=1}^{i-1} \Dcl_{ij} w_{k}^j\right)
\te{for $i=1,2,\ldots,s$ in ascending order}
\end{equation}
and set $x_{k+1} = \Acl x_k + \textstyle \sum_{i=1}^s \Bcl_i w_k^i$.
\end{lemma}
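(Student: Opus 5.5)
The plan is to work block by block. Set $F=\mathrm{blkdiag}(\partial f_1,\ldots,\partial f_s)$. We characterize $w\in H(v)$, with $H=(F^{-1}-\Dcl)^{-1}$, as a solution of the inclusion
\[
z=v+\Dcl w,\qquad w\in F(z).
\]
Because $\Dcl$ is block-lower-triangular as in \eqref{eq:tri_partition}, the $i$-th block row reads
\[
z^i=\Big(v_i+\textstyle\sum_{j<i}\Dcl_{ij}w^j\Big)+\Dcl_{ii}w^i,\qquad w^i\in\partial f_i(z^i).
\]
This says exactly that $w^i\in H_i\big(v_i+\sum_{j<i}\Dcl_{ij}w^j\big)$. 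Hence it suffices to prove that each $H_i$ is globally continuous; in fact we aim to show it is globally Lipschitz. Induction on $i=1,\ldots,s$ then shows that the triangular system has exactly one solution $w=(w^1,\ldots,w^s)$, namely the one given by \eqref{eq:algorithm_causal_eval}. Each $w^i$ is a composition of continuous maps of $v$, so $H$ is globally single-valued and continuous. This is Definition \ref{def:well_posed}, and the explicit recursion follows from \eqref{eq:algorithm_causal}.

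For a single block, drop the index $i$ and write $f\in\c{S}_{m,L}$, $D=\Dcl_{ii}$, $\si=(L-m)^{-1}$. Let $g:=f-m\bq$, which is \pcc. Since $(L-m)\bq-g$ is also \pcc, the conjugate $g^*$ is $\si$-strongly convex; this is a Baillon–Haddad type duality, and for $L=\infty$ we simply have $\si=0$. Consequently
\[
(\partial g)^{-1}=\partial g^*=\si I+M,\qquad M:=\partial(g^*-\si\bq),
\]
and $M$ is maximal monotone because it is the subdifferential of a \pcc\ function (Rockafellar). We have $\partial f=mI+\partial g$, so $w\in\partial f(z)$ iff $w=mz+u$ with $u\in\partial g(z)$, that is, $z\in\si u+M(u)$. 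Substituting $z=v+Dw=v+D(mz+u)$ gives
\[
z=(I-mD)^{-1}(v+Du).
\]
Here invertibility of $I-mD$ is implicit in \eqref{wpc}. The inclusion then becomes
\[
(I-mD)^{-1}v\in (N+M)(u),\qquad N:=\si I-D(I-mD)^{-1},
\]
where we used that $D$ commutes with $(I-mD)^{-1}$.

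Condition \eqref{wpc} says precisely that $\mathrm{Sym}(N)\succ 0$, so $N$ is a strongly monotone, continuous linear map. The sum $N+M$ of a maximal monotone operator and a continuous monotone linear map is again maximal monotone, and it is strongly monotone. By Minty's theorem it is therefore surjective, and strong monotonicity makes $(N+M)^{-1}$ single-valued and Lipschitz with constant $\lambda_{\min}(\mathrm{Sym}(N))^{-1}$. Thus $u$ depends Lipschitz-continuously on $v$. Then so does $z=(I-mD)^{-1}(v+Du)$, and so does $w=mz+u$, which proves that $H_i$ is globally continuous.

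I expect the main obstacle to be the existence part, i.e. surjectivity of $N+M$, rather than uniqueness. Two ingredients must be set up carefully for it: the maximal monotonicity of $M$ obtained through the conjugate $g^*-\si\bq$, including the degenerate case $L=\infty$, and the Minty argument for the nonsymmetric linear perturbation $N$. If the conjugacy route is awkward, the first thing I would try instead is to check maximality of $M$ directly, via the resolvent of $\partial g$ and the equivalence of cocoercivity with firm nonexpansiveness of $\si\,\partial g$.
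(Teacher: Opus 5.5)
Your proof is correct. The outer argument is the same as the paper's: you reduce $w\in(F^{-1}-\mathcal{D})^{-1}(v)$ to the triangular system $w^i\in H_i(v_i+\sum_{j<i}\mathcal{D}_{ij}w^j)$ and solve it by forward substitution. Where you differ is the single-block step, which the paper delegates to Lemma \ref{lem:well_posed_indiv}.

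The paper proves that lemma by a case split on $L_i$:
\begin{enumerate}
\item For $L_i<\infty$, it imports global continuity of the generalized resolvent $(I-\mathcal{D}_{ii}\partial f_i)^{-1}$ from an earlier tutorial. It then passes to the Yosida operator via Corollary \ref{Cinv}(1).
\item For $L_i=\infty$, it first infers from the strict inequality that $\mathcal{D}_{ii}$ is invertible. It then writes $H_i$ through the resolvent via Corollary \ref{Cinv}(3). Finally, it shows the resolvent is continuous because $-\mathcal{D}_{ii}^{-1}(I-m_i\mathcal{D}_{ii})+\partial(f_i-m_i\mathbf{q})$ is maximal strongly monotone.
\end{enumerate}

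You instead work directly on the Yosida inclusion in the shifted variable $u=w-mz$. Using $(\partial g)^{-1}=\sigma I+M$ with $M$ maximal monotone, you reach the single inclusion $(I-mD)^{-1}v\in(N+M)(u)$. There $\mathrm{Sym}(N)\succ 0$ is literally condition \eqref{wpc}.

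What your route buys:
\begin{itemize}
\item one uniform treatment of the smooth and nonsmooth cases;
\item no need for invertibility of $\mathcal{D}_{ii}$, nor for the symmetry that Lemma \ref{lem:well_posed_indiv} assumes;
\item a self-contained argument resting only on Rockafellar's and Minty's theorems;
\item a stronger conclusion, namely global Lipschitz continuity of $H_i$ rather than mere continuity;
\item a transparent explanation of why \eqref{wpc} is the natural condition.
\end{itemize}
The paper's route instead keeps in the foreground the generalized resolvent $(I-\mathcal{D}_{ii}\partial f_i)^{-1}$, which is the proximal-type map actually evaluated in implementations.

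The one step to write out is the Baillon--Haddad/conjugate step for $L<\infty$. Properness of $(L-m)\mathbf{q}-g$ forces $g<\infty$ everywhere, so $g$ is finite and convex with $(L-m)$-Lipschitz gradient. That is what makes $g^*$ $\sigma$-strongly convex and hence $g^*-\sigma\mathbf{q}$ proper, closed and convex.
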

\begin{proof}
The relation $w\in (F^{-1}-\c{D})^{-1}(\mathcal{C} x)$ is equivalent to $z=\mathcal{C} x+\c{D}w$, $w\in F(z)$. In a partition of the vectors $(w, z)$ compatible to the index  $i \in \{1, \ldots, s\}$ {of the blocks of $\c{D}$} in \eqref{eq:tri_partition}, the vector $z$ can be expressed as
\eql{h20}{
\textstyle z^i=\Ccl_i x^i+\sum_{j=1}^{i-1} \c{D}_{ij}w^j+\c{D}_{ii}w^i,\ w^i\in F_i(z^i)\te{for}i\in\{1,\ldots,s\}.\textstyle
}
Since $F\in \c{O}_{m,L}$ implies $F_i=\partial f_i\in\partial\c{S}_{m_i,L_i}$, {Lemma \ref{lem:well_posed_indiv}
assures that  $H_i= (F_i^{-1}-\Dcl_{ii})^{-1}$} is globally continuous under 
\r{wpc}. Therefore, \r{h20} is
equivalent to
\eql{h1}{
\textstyle w^i=H_i\left(\Ccl_i x +\sum_{j=1}^{i-1} \c{D}_{ij}w^j\right)\te{for}i\in\{1,\ldots,s\}.
}
This representation of $w\in (F^{-1}-\Dcl)^{-1}(z)$ proves that $H = (F^{-1}-\Dcl)^{-1}$ is
globally continuous, and  hence the algorithm is well-posed. Moreover, \r{h1} {leads to \eqref{eq:algorithm_causal_eval}
and hence to an explicit algorithm description.}
\end{proof}

\label{sec:information}

\section{Convergence of Optimization Algorithms}

\label{sec:convergence}

Convergence conditions of optimiziation algorithms $F \star (P \star \Kc)$ in the class $F \in \mathcal{O}_{m, L}$ will be derived by forming a link to regulation theory for linear systems \cite{francis1976internal, francis1977linear, saberi2012control}.

\subsection{Error Formulation}\label{Serr}

To capture the optimality condition \eqref{eq:reg_zero}, we introduce a {so-called} consensus matrix.

\begin{definition}
A matrix $N\in \R^{sc \times (s-1) c}$ is a \textbf{consensus matrix} if
    \label{defn:consensus}
    \begin{align}
    \mathrm{rank}(N) = (s-1)c\quad \mathrm{and} \qquad
    {\mathrm{ran}(N)=\mathrm{null}((\1_s\otimes I_c)^\top).}
    \label{eq:consensus_matrix}
\end{align}
\end{definition}
A particular choice is $N = \begin{pmatrix}
    I_{s-1}  \\ -\1_{s-1}^\top
\end{pmatrix} \otimes I_c$, such as $N = \small{\begin{pmatrix}
    1 & 0 \\ 0 & 1 \\ -1 & -1
\end{pmatrix}} \normalsize \otimes I_c$ {for} $s=3$.
When $s=1$, the consensus matrix is omitted by setting $N = [\cdot]$.
{
The unique $(\beta^*, w^*)$ of Problem~\ref{prob:composite} with $w^* \in F(\1_s \otimes \beta^*)$  can then be characterized by
\eql{defwstar}{0\in F(\1_s\otimes\beta^*)-N\hat w^*\te{for some}\hat w^*\in\R^{(s-1)c}.}

Let us partition $w^*:=N\hat w^*\in\R^{sc}$ as $w^*=\col(w^*_1,\ldots,w^*_s)$, with $w^*_i\in\partial f_i(\beta^*)$ and
associate to $f_i$ the zero-centered function 
$\tilde{f}_i(z):=f_i(z+\beta^*)-f_i(\beta^*)-(w^*_i)^\top z$
with $\t{f}_i\in\c{S}_{m,L}^0$ for $i\in\{1,\ldots,s\}$.
Then the function $\t f(z):=\sum_{i=1}^s \t{f}_i(z^i)$ for $z=(z^1,\ldots,z^s)\in\R^{sc}$ leads to the error operator $\tilde{F}:=\partial\t{f} \in\mathcal{O}_{m, L}^0$, which satisfies
\begin{align}
    \tilde{F}(\tilde{z}) = F(\tilde{z} + \1_s \otimes \beta^*) - N \hat{w}^*\te{for }\t z\in\R^{sc}. \label{tF}
\end{align}

Figure \ref{fig:disturbance_sources} visualizxes the original interconnection (left) and the equivalent $(\beta^*, \hat{w}^*)$-disturbed error system.
}

\begin{figure}[!h]
    \centering
\begin{subfigure}{0.45\linewidth}
    \centering
    \begin{tikzpicture}[xscale=1,yscale=1,baseline=(ko1)]
\def\dl{2*\dn}
\def\ds{3*\dn}
\node[sy3] (F) at (0,0)  {
$F$};
\tio{iF}{F}{east}{1/2};
\tio{oF}{F}{west}{1/2};

\node[sy3,below=\dl of F] (N) {
$
\mas{c|c:c}{ A & B_1 & B_2 \hl C_1 & D_1 & D_{12} \hdl C_2 & D_{21} & D_{2} } 
$
};

\node[sy3,below=\dl of N] (k) {
$
\mas{c|c}{\Ac & \Bc \hl \Cc & \Dc } 
$
};

\tio{ki1}{k}{east}{1/2};
\tio{ko1}{k}{west}{1/2};

\tio{Ni1}{N}{west}{1/3};
\tio{No1}{N}{east}{1/3};

\tio{Ni2}{N}{west}{2/3};
\tio{No2}{N}{east}{2/3};
\draw[<-] (Ni2)--   ($(Ni2) + (-1*\ds, 0)$)  |- node[pos=.25]{$w$} (oF) ;
\draw[->] (No2)--  ($(No2) + (1*\ds, 0)$) |- node[pos=.25,swap]{$z$} (iF);

\draw[->] (Ni1)--   ($(Ni1) + (-1*\ds, 0)$)  |- node[pos=.25, swap]{$y$} (ko1) ;
\draw[<-] (No1)--  ($(No1) + (1*\ds, 0)$) |- node[pos=.25]{$u$} (ki1);
\end{tikzpicture}
    \caption{Interconnection with $F \in \mathcal{O}_{m, L}$ }
    \label{fig:disturbance_sources_orig}
    \end{subfigure}
\hfill
\begin{subfigure}{0.45\linewidth}
\begin{tikzpicture}[xscale=1,yscale=1, baseline = (F)]
\def\dl{2*\dn}
\def\ds{3*\dn}
\node[sy3] (F) at (0,0)  {
$\tilde{F}$};
\tio{iF}{F}{east}{1/2};
\tio{oF}{F}{west}{1/2};

\node[sy3,below=\dl of F] (N) {
$
\mas{c|c:c}{ A & B_1 & B_2 \hl C_1 & D_1 & D_{12} \hdl C_2 & D_{21} & D_{2} } 
$
};

\node[sy3,below=\dl of N] (k) {
$
\mas{c|c}{\Ac & \Bc \hl \Cc & \Dc } 
$
};

\tio{ki1}{k}{east}{1/2};
\tio{ko1}{k}{west}{1/2};

\tio{Ni1}{N}{west}{1/3};
\tio{No1}{N}{east}{1/3};

\tio{Nih}{N}{west}{1/2};
\tio{Noh}{N}{east}{1/2};

\tio{Ni2}{N}{west}{2/3};
\tio{No2}{N}{east}{2/3};

\draw[->] (Ni1)--   ($(Ni1) + (-1*\ds, 0)$)  |- node[pos=.25, swap]{$y$} (ko1) ;
\draw[<-] (No1)--  ($(No1) + (1*\ds, 0)$) |- node[pos=.25]{$u$} (ki1);

\node[sum,right= 4.6*\dl of F] (sbeta) {
$+$
};

\node[sum,left= 4.6*\dl of F] (sw) {
$+$
};

\draw[->] (sw.south)   |-  node[pos=0.25, swap]{$w$} (Ni2) ;
\draw[<-] (sbeta.south) |-  node[pos=0.25]{$z$} (No2);

\draw[<-] (sw)--   node[pos=.5, swap]{$\tilde{w}$} (oF);
\draw[->] (sbeta)--   node[pos=.5]{$\tilde{z}$} (iF);

\draw[->] ($(sw) + (0, 2*\ds)$)--   node[pos=.5,swap]{$N \hat{w}^*$} (sw);
\draw[->] ($(sbeta) + (0, 2*\ds)$)--   node[pos=.5]{$-\beta^*$} (sbeta);

\end{tikzpicture}
\caption{Interconnection with $\tilde{F} \in \mathcal{O}_{m, L}^0$ }
\label{fig:disturbance_sources_tilde}
\end{subfigure}
\hfill
\vspace{-3mm}
\caption{{Original interconnection (left) and error system (right) with exogenous disturbance $(-\beta^*, N\hat{w}^*)$}}
\label{fig:disturbance_sources}
\end{figure}
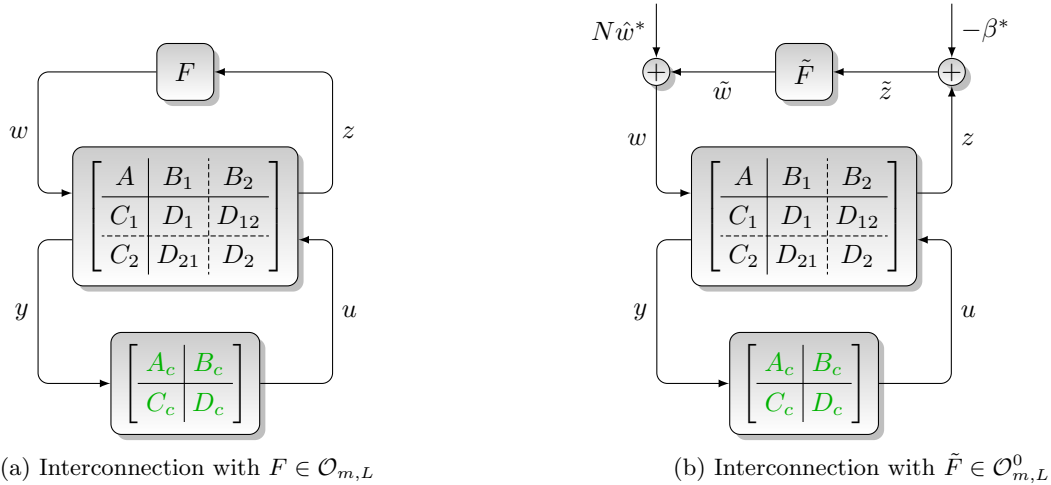

The error signals
\eql{errsig}{\t z_k := z_k - \1_s \otimes \beta^*\te{and}\t w_k:=w_k-N\hat w^*}
may be computed from any trajectory of $F\star (P\star\Kc)$, and 
are related as $\t w_k\in F(z_k)-N\hat w^*=F(\t z_k+\1_s\otimes \beta^*)-N\hat w^*=\t F(\t z_k)$ for all  $k\in\N$.

 Any trajectory of $F\star (P\star\Kc)$ (described by \eqref{eq:network_dynamics}-\eqref{eq:controller} and $w_k\in F(z_k)$)
therefore generates a trajectory  of the following error system
 \begin{subequations}
\label{eq:regnonlinear}
\begin{align}
   \t F: & & \tilde{w}_k &\in \tilde{F}(\tilde{z}_k), \label{eq:regnonlinear_delta}\\
   P_e: & &  \mat{c}{
    x^N_{k+1}  \hl \tilde{z}_k \hdl e_k \hdl y_k
    } &=
    \mat{c|c:cc:c}{ A & B_1 & 0 & B_1 N & B_2 \hl
    C_1 & D_1 & \1_s \otimes I_c  & D_{1}N  &  D_{12} \hdl
    C_1 & D_1 & \1_s \otimes I_c  & D_{1}N & D_{12} \hdl
    C_2 & D_{21} & 0 & D_{21}N  & D_2 }\mat{c}{
        x^N_k \hl \tilde{w}_k \hdl -\beta^*\\\hat w^* \hdl  u_k
    },
\label{eq:regnonlinear_network} \\
    \Kc: & &      \mat{c}{
        \xi_{k+1}  \hl u_k
    } &= \mat{c|cc}{ \Ac & \Bc \hl
    \Cc & \Dc}\mat{c}{
        \xi_k \hl y_k
    }.
    \end{align}
\end{subequations}
Here $e_k$ is a copy of $\t z_k$ and interpreted as an error output,
while $(-\beta^*,\hat w^*)$ is interpreted as a constant disturbance input.
We refer to this error interconnection as $\t{F}\star (P_e\star\Kc)$.
As a consequence,
$P_e\star\Kc$ is well-posed iff $P\star \Kc$ is well-posed.
Similarly, in view of Proposition \ref{cor:well_posed_exp} and \r{tF}, $\t F\star (P_e\star\Kc)$ is well-posed iff
$F\star (P\star\Kc)$ is well-posed.

If the well-posed algorithm $F\star (P\star\Kc)$ converges to the fixed point $(x^*,N\hat w^*,1\otimes\beta^*)$,
we infer all trajectories of the error system satisfy $\lim_{k\to\infty} e_k=0$, despite being affected by the (unknown) constant disturbance input $(-\beta^*,\hat w^*)$.

\subsection{Test Quadratics and Assumptions}\label{Stqu}

{The interconnections $F\star (P\star \Kc)$ and \eqref{eq:regnonlinear} involve the nonlinearities $F \in \mathcal{O}_{m, L}$ and $\tilde{F} \in \mathcal{O}_{m, L}^0$, and convergence or output regulation must occur for all nonlinearities in the respective classes. This leads to a problem of nonlinear robust output regulation. Since we are confronted with structured uncertainties,
neither nonlinear nor linear output regulation theory \cite{isidori2003robust,francis1977linear} can be applied to designing robustly converging algorithms.

To develop conditions for $F \star (P \star \Kc)$ to converge for all $F \in \mathcal{O}_{m, L}$, we first {investigate convergence of} $F^t \star (P \star \Kc)$ for affine maps $F^t$ that arise from considering the composite optimization problem \eqref{eq:composite} with $f^t(z^1,\ldots,z^s) := \sum_{i=1}^s f^t_i(z^i)$ involving the quadratic functions
\begin{align}
    f^t_i(z_i) & := \frac{m_i}{2} \norm{\beta}_2^2 + b_i^\top z_i \te{where}b_i \in \R^c\te{for} i\in \{1, \ldots, s\}. \label{eq:test_quadratics}
\end{align}
With $\bm := \diag(m) \otimes I_c$ and  $b := \col(b_1,\ldots,b_s)$,
we infer $F^t(z) = \nabla f^t(z) = \bm z + b$ and $\tilde{F}^t(\tilde{z}) = \bm \tilde{z}$.
We refer to these choices as test quadratics.

We note that the composite optimality principle in  \eqref{eq:composite} reads as $\beta^*\sum_{i=1}^s m_i+\sum_{i=1}^s b_i=0$ for all test quadratics. Strong convexity implies uniqueness of $(\beta^*, w^*)$ as  $\textstyle  \beta^*  \textstyle  = \left(\sum_{i=1}^s m_i\right)^{-1}\sum_{i=1}^s b_i$, and $w^*_i = m_i \beta^* + b_i$ for all $i$. The test quadratics then always satisfy
$F^t \in  \mathcal{O}_{m, L}$, and hence $\tilde{F}^t \in \mathcal{O}_{m, L}^0$.

If $E(\bm) :=  I -  D_1 \bm$ is invertible, the algorithm $\t F^t\star P_e$ described by \eqref{eq:regnonlinear_delta} and
\eqref{eq:regnonlinear_network} has the state-space representation with exogenous disturbance 
\begin{align}
P^\bm:&&
\mat{c}{x^N_{k+1}  \hl e_k \hdl y_k} &=
    \mat{c|c:c}{A^\bm & B_1^\bm & B_2^\bm \hl
    C_1^\bm & D_1^\bm & D_{12}^\bm \hdl
    C_2^\bm & D_{21}^\bm & D_{2}^\bm}
    \mat{c}{x^N_k \hl d \hdl  u_k}\te{where}d=\mat{c}{-\beta^*\\\hat w^*}\label{qua}
\end{align} 
with the matrices
\begin{align}
\setlength{\arraycolsep}{0.5pt}
    \mat{c|cc}{A^\bm & B_1^\bm & B_2^\bm \hl
    C_1^\bm & D_1^\bm & D_{12}^\bm \\
    C_2^\bm & D_{21}^\bm & D_{2}^\bm} \!\!:=\!\! \mat{c|cc:c}{ A & 0 & B_1 N & B_2 \hl
    C_1 & \1_s \!\otimes\! I_c  & D_{1}N  &  D_{12} \hdl
    C_2 &  0 & D_{21}N  & D_2 } \!\!+\!\! \mat{c}{B_1 \hl D_1 \\ D_{21}} \! \bm E(\bm)^{-1} \! \mat{c|cc:c}{C_1 & (\1_s \!\otimes\! I_c)  & D_{1}N & D_{12}}\!\!. \label{eq:loop_closure_m}
\end{align}

Solving the algorithm design problem for $F^t \star (P \star \Kc)$ then boils down to solving a nominal regulation problem for the linear system $P^\bm\star\Kc$. To guarantee that $P^\bm$ can be
internally stabilized with some $\Kc$, we require that
$\left(A^\bm,   B^\bm_2\right)$ is stabilizable and $\left(A^\bm,   C^\bm_2\right)$ is detectable.

Let us collect all assumptions as follows.}

\begin{assum}
    \label{assum:well_posed}
    The matrix $E(\bm) = I - D_1 \bm$ is invertible.
\end{assum}

\begin{assum}
 $\left(A^\bm,   B^\bm_2\right)$ is stabilizable and $\left(A^\bm,   C^\bm_2\right)$ is detectable.
   \label{assum:stab_dect}
\end{assum}

We recall that the class $\mathcal{O}_{m, L}$ is defined such that $\sum_{i=1}^s m_i > 0$.

\subsection{Convergence of Algorithms}

We are now ready to formulate the first main contribution of this paper.

{
\begin{theorem}%
\label{thm:convergence}
Suppose we are given a network $P$, a consensus matrix $N$, and a class of subgradient operators $\mathcal{O}_{m, L}$ such that Assumptions \ref{assum:well_posed}-\ref{assum:stab_dect} hold.
If the interconnection of $F$ and $(P \star \Kc)$ is well-posed for all $F \in \mathcal{O}_{m, L}$, then the algorithm $F \star (P \star \Kc)$ is convergent if and only if the following conditions are satisfied:
\begin{enumerate}
    \item \textbf{Robust Stability:} $I-D_2\Dc$ is invertible and $\tilde{F}\star (P\star \Kc)$ described by
    \begin{subequations}
    \label{eq:PK}
    \begin{align}
        \label{eq:reg_zero_nonlinear}
    \mat{c}{
    x_{k+1}^N  \hl z_k \hdl y_k
    } &= \mat{c|c:c}{ A & B_1 & B_2 \hl
    C_1 & D_1   & D_{12} \hdl
    C_2 & D_{21}  & D_2 }\mat{c}{
        x_k^N \hl w_k  \hdl  u_k
    },\\
         \mat{c}{
        \xi_{k+1}  \hl u_k
    } &= \mat{c|c}{ \Ac  & \Bc \hl
    \Cc & \Dc}\mat{c}{
        \xi_k \hl y_k
    }
    \label{eq:regnonlinear_F_zero}
    \end{align}
    amd $w_k\in \t F(z_k)$ is well-posed and satisfies $\lim_{k\rightarrow \infty} (x_k, \xi_k) = 0$ for all
     initial states $(x_0,\xi_0)$ and all $\tilde{F}\in \mathcal{O}^0_{m, L}$.
    \end{subequations}

    \item Solvability of \textbf{Regulator Equations:}  A unique tuple $(\Pi, \Theta, \Gamma, \Phi)$ exists with 
    \begin{subequations}
    \label{eq:nominal_regulation_control_sys}
    \begin{align}
        \mat{c|cc:c}{A & 0 & B_1 N & B_2 \hl
        C_1  & \1_s \otimes I_c & D_{1}N &  D_{12} \hdl
        C_2 & 0 & D_{21}N & D_2} \mat{c}{\Pi\hl I\hdl \Gamma}&=\mat{c}{\Pi \hl 0 \hdl \Phi}, \label{eq:nominal_regulation_control_sys_plant}  \\
\mat{cc}{\Ac&\Bc\\ \Cc&\Dc}
\mat{c}{\Theta\\\Phi}  &= \mat{c}{\Theta \\ \Gamma}. \label{eq:nominal_regulation_control_sys_control}
    \end{align}
    \end{subequations}

\end{enumerate}

Given a map $F \in \mathcal{O}_{m, L}$ with optimal solution $(\beta^*, w^*)$ with $w^* = N \hat{w}^*$, convergence of the well-posed $F \star (P \star \Kc)$ by conditions 1 and 2 imply the following asymptotics for all initial conditions $(x^N_0, \ \xi_0)$:

\begin{align}
    \lim_{k \rightarrow \infty}
    \mat{c}{x_k^N \\[-0.4ex] \xi_k \hl  y_k \\[-0.4ex] u_k} & = \mat{c}{\Pi \\[-0.4ex] \Theta \hl \Phi \\[-0.4ex] \Gamma} \mat{c}{-\beta^* \\ \hat{w}^*}, &   \lim_{k \rightarrow \infty}
    \mat{c}{z_k \\ w_k } & =  \mat{c}{z^* \\ w^*} = \mat{c}{\1_s \otimes \beta^* \\ N \hat{w}^*} .
     \label{eq:asymptotic_regulation_closed_sig}
\end{align}
\end{theorem}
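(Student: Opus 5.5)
We prove both directions by working with the error system $\t F\star(P_e\star\Kc)$ of \eqref{eq:regnonlinear} and specializing to the test quadratics $F^t$ of \eqref{eq:test_quadratics}. These give the linear closed loop $P^\bm\star\Kc$ of \eqref{qua} driven by the constant disturbance $d=(-\beta^*,\hat w^*)$.

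\emph{Sufficiency.} Assume Robust Stability and the Regulator Equations. Given $F\in\mathcal{O}_{m,L}$ with solution $(\beta^*,N\hat w^*)$, we define the candidate steady state
\[
x^{N*}=\Pi d,\quad \xi^*=\Theta d,\quad y^*=\Phi d,\quad u^*=\Gamma d,\quad z^*=\1_s\otimes\beta^*,\quad w^*=N\hat w^*.
\]
Multiplying \eqref{eq:nominal_regulation_control_sys_plant} and \eqref{eq:nominal_regulation_control_sys_control} by $d$ shows that this tuple satisfies the plant and controller equations with $\t z^*=0$, $\t w^*=0$. Since $0\in\t F(0)$, it is a fixed point of $F\star(P\star\Kc)$ (Definition \ref{defn:fixed_point}), and it satisfies Consensus and Optimality.

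Next we subtract the fixed point from an arbitrary trajectory. By the linearity of $P$ and $\Kc$, the deviations $(x^N_k-x^{N*},\xi_k-\xi^*)$ together with $(\t z_k,\t w_k)$ form a trajectory of \eqref{eq:PK} with $\t w_k\in\t F(\t z_k)$ and $\t F\in\mathcal{O}^0_{m,L}$ given by \eqref{tF}. Robust Stability then gives state convergence. Well-posedness, via Proposition \ref{prop:well_posed_converge}, upgrades this to convergence of $(w,z)$, and of $(y,u)$ through the linear output maps. This yields Attractivity and the asymptotics \eqref{eq:asymptotic_regulation_closed_sig}.

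\emph{Necessity of Robust Stability.} Suppose the algorithm is convergent for all $F$. Well-posedness of $P\star\Kc$ is part of the standing hypothesis; we take $I-D_2\Dc$ invertible as part of the meaning of $P\star\Kc$. Take any $\t F\in\mathcal{O}^0_{m,L}$ and regard it as an $F$ itself. Its solution is $\beta^*=0$, $w^*=0$, and this is the unique pair because $\t F\in\mathcal{O}_{m,L}$. Convergence then forces every trajectory to approach a fixed point $(x^*,\xi^*)$ with $z^*=0$, $w^*=0$. We show this fixed point is zero from the linear equations
\[
x^*=Ax^*+B_2u^*,\quad 0=C_1x^*+D_{12}u^*,\quad y^*=C_2x^*+D_2u^*,
\]
together with the controller equations, since $\t F(0)\ni 0$ makes the origin itself a fixed point. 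Because attractivity holds for \emph{all} initial states, the limit is independent of $x_0$. Starting at the origin stays at the origin, so every limit is $0$.

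\emph{Necessity of the Regulator Equations.} Specialize to the test quadratics with $\t F^t=\bm$. By Assumption \ref{assum:well_posed}, the loop is the linear system $P^\bm\star\Kc$. Robust Stability, already shown, makes its closed-loop matrix Schur, and convergence gives $e_k\to0$ for all $d$. A standard argument from linear regulation (Francis) then applies. Since the closed-loop matrix $\mathcal{A}^\bm_{cl}$ is Schur, the Sylvester-type equation $\mathcal{A}^\bm_{cl}X+\mathcal{B}^\bm_{cl}=X$ (disturbance dynamics equal to the identity) has the unique solution $X=(I-\mathcal{A}^\bm_{cl})^{-1}\mathcal{B}^\bm_{cl}$. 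The steady-state error is $(\mathcal{C}^\bm_{cl}X+\mathcal{D}^\bm_{cl})d$, which must vanish for every $d$. Partition $X=\col(\Pi,\Theta)$ and define $\Phi,\Gamma$ through the output equations.

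To return to the original data, note that $e=0$ means $\t z=0$, and hence $\t w=\bm\t z=0$. Undoing the loop closure \eqref{eq:loop_closure_m} then shows that $(\Pi,\Theta,\Gamma,\Phi)$ satisfies \eqref{eq:nominal_regulation_control_sys} without $\bm$. This $\bm$-independence is the main technical step. The loop-closure term is multiplied by $\bm E(\bm)^{-1}$ times the $e$-row, so it vanishes exactly when the regulation output is zero. This step must be checked carefully, but it is a direct block computation.

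Uniqueness follows because any solution of \eqref{eq:nominal_regulation_control_sys} also solves the $\bm$-closed Sylvester equation, whose solution is unique since $\mathcal{A}^\bm_{cl}$ is Schur. I expect the main obstacle to be the clean handling of the steady-state argument in the necessity direction. The fixed point must be shown to be the unique limit, and the regulator solution for $P^\bm$ must be transferred back to the $\bm$-free equations \eqref{eq:nominal_regulation_control_sys}.
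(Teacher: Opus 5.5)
Your proposal is correct and follows essentially the same route as the paper: the shift by $(\Pi d,\Theta d,\Phi d,\Gamma d)$ plus Proposition~\ref{prop:well_posed_converge} for sufficiency, the ``a trajectory started at the origin stays there'' argument for necessity of Robust Stability, and test quadratics with Francis-type closed-loop regulation followed by the cancellation $\Omega_{\bm}\,\col(\Pi,I,\Gamma)=0$ for necessity of the Regulator Equations. One step needs tightening: identifying $\bm\star(P\star\Kc)$ with the linear loop $P^{\bm}\star\Kc$ also requires invertibility of $I-\Dcl\bm$, which comes from well-posedness at $F=\bm$ (as the paper shows), not from Assumption~\ref{assum:well_posed} alone; separately, your uniqueness argument via the Schur closed-loop equation is a welcome addition, since the paper leaves that step implicit.
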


\begin{proof}
    See Appendix \ref{app:convergence_proof}. 
\end{proof}

If the Robust Stability condition fails, there exists an operator $\t{F}\in\c{O}_{m,L}^0$ such that
$\t{F} \star (P\star \Kc)$ is not convergent. If the Regulator Equation has no solution, there exists a test quadratic $F^t$ such that that $F^t \star (P\star \Kc)$ is not convergent.

The regulator equation conditions \eqref{eq:nominal_regulation_control_sys} are independent of the specific $F \in \mathcal{O}_{m, L}.$
The regulator equation \eqref{eq:nominal_regulation_control_sys_plant}
only involves the description of the network $P$ and the consensus matrix $N$ (but not  the controller $\Kc$), and can
be verified using linear algebra. There may exist multiple solutions $(\Pi, \Gamma, \Phi)$ to \eqref{eq:nominal_regulation_control_sys_plant}, but the tuple $(\Pi, \Gamma, \Phi, \Theta)$ for a given convergent $F \star (P \star \Kc)$ is unique.

The Robust Stability condition is significantly more involved to verify than the Regulator Equation condition, because \eqref{eq:reg_zero_nonlinear} is a statement about global attractivity {of the fixed point $0$ for all operators in} $\mathcal{O}_{m, L}^0$.
In this work, we numerically certify Robust Stability  through sufficient LMI-based conditions.

\subsection{Convergence under Direct Interconnection}

In the case of direct interconnection dynamics in \eqref{eq:direct_interconnection}, the {error system $P^0_e$
in \eqref{eq:regnonlinear_network} is static with
\begin{align}
       P_e^0: \mat{c}{
    \t z_k \hdl e_k \hdl y_k
    } &= \mat{c: cc:c}{0& \1_s \otimes I_c & 0  &  I_{sc} \hdl
     0 & \1_s \otimes I_c & 0   & I_{sc} \hdl
    I_{sc} & 0 &  N & 0 }\mat{c}{
        \t w_k \hdl -\beta^* \\ \hat{w}^* \hdl  u_k
    }.      \label{eq:reg_aff_network_static}
\end{align}}

The regulator {equations \eqref{eq:nominal_regulation_control_sys_plant} for $P_e^0$ in
\eqref{eq:reg_aff_network_static} are satisfied by
\begin{align}
    \Pi &= [\cdot], & \Gamma &= -\mat{cc}{\1_s \otimes I_c  &0 }, &  \Phi &= \mat{cc}{0 &N }. \label{eq:gam_phi}
\end{align}
}

Eq. \eqref{eq:nominal_regulation_control_sys_control} may be expressed using the parameters in \eqref{eq:gam_phi} as
\begin{align}
\mat{c}{\Ac - I\\ \Cc } \Theta = \mat{cc}{0 &  -\Bc N\\  -(\1_s \otimes I_c) & - \Dc N}.
    \label{eq:direct_regulator}
\end{align}
The equation in \eqref{eq:direct_regulator} was previously derived in \cite[Proposition 1]{upadhyaya2025automated} Kronecker structure. We connected this equation to regulation theory, and generalized this construction to networked optimization algorithms.

{Let us} relate the  regulator equations in \eqref{eq:direct_regulator} to existing results in the optimization literature. The work in \cite{upadhyaya2025automated} considers the Kronecker-structured setting with $N^a \otimes I_c = N$.
An algorithm $F \star \Kc$ satisfies the  Fixed Point Encoding property \cite{ryu2020uniqueness} for some operator $F$ if every fixed point $(x^*, w^*, z^*)$ of the $F \star \Kc$ satisfies the consensus and optimality properties ($\1_s^\top  \otimes w^* = 0, N^\top z^* = 0)$. 
A  system $\Kc$ with Kronecker-structured representation $(\Ac^a, \Bc^a, \Cc^a, \Dc^a)$  satisfies the Fixed Point Encoding property of \cite[Definition 1]{upadhyaya2025automated} (based on prior work in \cite{ryu2020uniqueness}) if the following containments hold \cite[Proposition 1]{upadhyaya2025automated}:
\begin{subequations}
\label{eq:enconding_both}
\begin{align}
        \mathrm{ran}\mat{cc}{
            \Bc^a N^a & \0 \\ \Dc^a N^a & -\1_s
        } &\subseteq \mathrm{ran} \mat{c}{
            I-\Ac^a \\ -\Cc^a},
        \label{eq:encoding_range} \\
        \mathrm{null}\mat{cc}{
            \Ac^a - I & \Bc^a}
        &\subseteq \mathrm{null}\mat{cc}{
            (N^a)^\top \Cc^a & (N^a)^\top \Dc^a \\
            0 & \1^\top_s}.
            \label{eq:encoding_null}             \end{align}
\end{subequations}
Algorithm convergence in
\cite{upadhyaya2025automated} is based on {the verification that $(\Ac^a, \Bc^a, \Cc^a, \Dc^a)$} satisfies \eqref{eq:encoding_range} and \eqref{eq:encoding_null},  and $F \star \Kc$ obeys a   Lyapunov stability property based on interpolation conditions \cite{drori2014performance}. The Fixed Point Encoding is a valid property for composite optimization problems nonunique pairs $(\beta^*, w^*)$ such as convex problems, while our notion of $\mathcal{O}_{m, L}$ requires unique pairs $(\beta^*, w^*)$.

\begin{proposition}
If a Kronecker-structured controller $\Kc$ with satisfies the regulator equation in \eqref{eq:direct_regulator} and {$(\Ac-I\ \Bc)$ has  full row rank (e.g. if $(\Ac,\Bc,\Cc,\Dc)$ is a minimal realization)}, then both Fixed Point Encoding conditions in \eqref{eq:enconding_both} are satisfied.
\label{prop:fixed_point_encoding}
\end{proposition}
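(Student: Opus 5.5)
The plan is to read both Fixed Point Encoding containments in \eqref{eq:enconding_both} directly off the regulator equation \eqref{eq:direct_regulator}. I would work with the full matrices $\Ac=\Ac^a\otimes I_c$, $\Bc=\Bc^a\otimes I_c$, and so on, and with $N=N^a\otimes I_c$. This avoids having to assume that the solution $\Theta$ of \eqref{eq:direct_regulator} is itself Kronecker-structured. At the end I would pass back to the $a$-level using two elementary facts, for matrices $M,K$ with compatible sizes:
\[
\mathrm{ran}(M\otimes I_c)\subseteq\mathrm{ran}(K\otimes I_c)\iff\mathrm{ran}(M)\subseteq\mathrm{ran}(K),
\qquad
\mathrm{null}(K\otimes I_c)=\mathrm{null}(K)\otimes\R^c .
\]
Consequently a null-space containment for the $\otimes I_c$ matrices is equivalent to the same containment for the $a$-matrices.

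\textbf{Range condition \eqref{eq:encoding_range}.} Write $\Theta=(\Theta_1\ \Theta_2)$, split according to the columns of \eqref{eq:direct_regulator}. The regulator equation then states
\[
\mat{c}{I-\Ac\\-\Cc}\Theta_1=\mat{c}{0\\ \1_s\otimes I_c}
\te{and}
\mat{c}{I-\Ac\\-\Cc}\Theta_2=\mat{c}{\Bc N\\ \Dc N}.
\]
Hence every column of $\mat{cc}{\Bc N & 0\\ \Dc N & \1_s\otimes I_c}$ lies in $\mathrm{ran}\mat{c}{I-\Ac\\-\Cc}$. Since $\mathrm{ran}$ is a subspace, the sign of the block $\1_s\otimes I_c$ is irrelevant. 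Translating to the $a$-level with the range equivalence above gives \eqref{eq:encoding_range}.

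\textbf{Null condition \eqref{eq:encoding_null}.} This is the step where the rank hypothesis enters. The regulator equation can be rewritten as $(\Ac-I)\Theta+\Bc\,(0\ \ N)=0$. So the $sc$ columns of $\mat{cc}{\Theta_1&\Theta_2\\0&N}$ lie in $\mathrm{null}(\Ac-I\ \ \Bc)$.

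\emph{Step 1: these columns are linearly independent.} Suppose $\Theta_1a+\Theta_2b=0$ and $Nb=0$. Since $N$ has full column rank, $b=0$. Then $\Theta_1a=0$, so $0=\Cc\Theta_1a=-(\1_s\otimes I_c)a$, which forces $a=0$.

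\emph{Step 2: they span the whole null space.} Because $(\Ac-I\ \ \Bc)$ has full row rank $n$ and $sc+n$ columns, its null space has dimension exactly $sc$. Together with Step 1, the columns above form a basis of $\mathrm{null}(\Ac-I\ \ \Bc)$.

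\emph{Step 3: check the containment on this basis.} It suffices to verify that the right-hand matrix of \eqref{eq:encoding_null} annihilates each basis element. Applying $(\Cc\ \ \Dc)$ to the basis gives $\Cc\Theta+\Dc(0\ \ N)=(-\1_s\otimes I_c\ \ 0)$ by \eqref{eq:direct_regulator}. Multiplying by $N^\top$ yields zero, because $N^\top(\1_s\otimes I_c)=0$ by \eqref{eq:consensus_matrix}. For the second block row, the $w$-components of the basis are $(0\ \ N)$, and $(\1_s\otimes I_c)^\top(0\ \ N)=0$ again by \eqref{eq:consensus_matrix}.

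Linearity then gives the containment on the whole null space. Passing to the $a$-level via $\mathrm{null}(K\otimes I_c)=\mathrm{null}(K)\otimes\R^c$ yields \eqref{eq:encoding_null}.

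\textbf{Main obstacle.} The crux is the dimension count in the null-space step. Full row rank of $(\Ac-I\ \ \Bc)$ is exactly what makes the $sc$ regulator-derived vectors exhaust the null space. Without that hypothesis, the null space could contain directions unrelated to $\Theta$ for which consensus or optimality fails. The minimal-realization remark in the statement should be handled by noting that controllability of $(\Ac,\Bc)$ implies $(\Ac-\lambda I\ \ \Bc)$ has full row rank for every $\lambda$, in particular for $\lambda=1$ (PBH test).
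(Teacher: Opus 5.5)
Your proof is correct and follows the paper's argument. The range condition is read off \eqref{eq:direct_regulator} directly. For the null condition, the $sc$ regulator-derived columns of $\left(\begin{smallmatrix}\Theta_1 & \Theta_2\\ 0 & N\end{smallmatrix}\right)$ are shown to be independent, to fill the $sc$-dimensional null space given by the full-row-rank hypothesis, and to be annihilated by the consensus/optimality matrix. The only difference is that you work at the lifted $\otimes I_c$ level and pass to the $a$-matrices at the end, which avoids having to argue that \eqref{eq:direct_regulator} admits a Kronecker-structured solution $\Theta^a$ (a point the paper leaves implicit).
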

\begin{proof}
  See Appendix \ref{app:fixed_point_encoding}

\end{proof}

\section{Structure of Optimization Algorithms}
\label{sec:structure}

Controllers $\Kc$ forming convergent and well-posed 
optimization algorithms $F \star (P \star \Kc)$ satisfy structural constraints arising from the regulator equation \eqref{eq:nominal_regulation_control_sys_control}. These controllers $\Kc$  admit a factorization into a network-dependent internal model and a \cn{core subcontroller} carrying the algorithm parameters. The internal model decomposition is visualized in  Fig.~\ref{fig:internal_model_factor}.

\begin{figure}[!h]
    \centering
    \begin{tikzpicture}[xscale=1,yscale=1,baseline=(ko1)]
\def\dl{2*\dn}
\def\ds{3*\dn}

\node[sy3] (N) at (0, 0) {
$P$
};

\node[sy3, below right = \dl and 0*\dl of N] (M)  {
$\Sigma_{\min}$
};

\tio{iM1}{M}{east}{1/2};
\tio{oM1}{M}{west}{1/2};

\node[sy3, above =1*\dl of N] (LM)  {
$
F
$
};

\tio{iLM2}{LM}{east}{1/2};
\tio{oLM2}{LM}{west}{1/2};

\tio{ki1}{N}{east}{2/3};
\tio{ko1}{N}{west}{2/3};
\tio{ki2}{N}{east}{1/3};
\tio{ko2}{N}{west}{1/3};

\node[sy3, left=2*\dl of M] (kc) {
$\Sco$
};

\tio{kci1}{kc}{west}{1/2};
\tio{kco1}{kc}{east}{1/2};

\draw[<-] (ko1)--   ($(ko1) + (-1*\ds, 0)$)  |- node[pos=.25]{$w$} (oLM2) ;
\draw[->] (ki1)--  ($(ki1) + (1*\ds, 0)$) |- node[pos=.25,swap]{$z$} (iLM2);

\draw[->] (ko2)--   ($(ko2) + (-4*\ds, 0)$)  |- node[pos=.25, swap]{$y$} (kci1) ;
\draw[<-] (ki2)--  ($(ki2) + (4*\ds, 0)$) |- node[pos=.25]{$u$} (iM1);

\draw[->]    (kco1) -- node[pos=.5]{} (oM1);

\node[
    draw=green!60!black,
    line width=.3ex,
    rounded corners,
    dashed,
    fit=(M)(kc),
    inner xsep=0.8em,
    inner ysep=0.4em
] (kbox) {};

\node[below =0.2*\dl of kbox]{$\Kc$};

\end{tikzpicture}
    \caption{Factorization of 
    $F \star (P \star \Kc)$ into $F \star (P \star (\Sigma_{\min} \star \Sco))$.}
    \label{fig:internal_model_factor}
    \vspace{-3mm}
\end{figure}

\subsection{Internal Model Factorizations}

The internal model factorization is developed under the  additional assumptions.
\begin{assum}
    \label{assum:regulator}
    The matrix $\mat{cc}{A-I & B_2 \\ C_1 & D_{12}}$ has full column rank.
\end{assum}

    \begin{assum}
        \label{assum:disturbance_detec}
        The pair $\left(\mat{cc}{A^\bm & B^\bm_1 \\0 & I_{cs}}, \mat{cc}{ C^\bm_2 & D_{21}^\bm}\right)$ is detectable.
    \end{assum}

Assumption \ref{assum:regulator} ensures that the regulator equation in  \eqref{eq:nominal_regulation_control_sys_plant} has at most one solution $(\Pi, \Gamma, \Phi)$.
Assumption \ref{assum:disturbance_detec} is standard {in} linear regulation theory, and serves to ensure the existence of an internal-model-based controller $\Kc$ such that $F^t \star (P \star \Kc)$ is a convergent optimization algorithm for the test quadratics $F^t$.
 In the case of a direct interconnection \eqref{eq:direct_interconnection}, the {condition} $\sum_{i=1}^s m_i \neq 0$
 (Assumption 1) is necessary and sufficient for Assumption \ref{assum:disturbance_detec} to be satisfied.

\begin{theorem}[Algorithm Structure]
Under Assumptions \ref{assum:well_posed}-\ref{assum:disturbance_detec}, if $\Kc$ has a minimal {realization} $(\Ac, \Bc, \Cc, \Dc)$ and  $F \star (P \star \Kc)$ is convergent for all $F \in \mathcal{O}_{m, L}$ as certified by Theorem \ref{thm:convergence}, then there {exist}
\begin{enumerate}
    \item an internal model system $\Sigma_{{\text{min}}}$ that depends only on $(\Pi, \Gamma, \Phi)$,
    \item and a \cn{core subcontroller} $\cn{\Sigma_{\text{core}}}$ with its state-space matrices lying in a $\Theta$-parameterized affine space $\cn{\mathcal{L}_{\text{core}}}(\Theta)$
\end{enumerate}
{such that the controller $\Kc$ may be} factored into a cascade $\Kc = \Sigma_{\text{min}} \cn{\Sigma_{\text{core}}}$.
    \label{thm:main_structure}
\end{theorem}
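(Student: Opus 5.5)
The plan is to start from the controller regulator equation \eqref{eq:nominal_regulation_control_sys_control}, which Theorem \ref{thm:convergence} guarantees to have a unique solution $\Theta$ given $(\Pi,\Gamma,\Phi)$. Assumption \ref{assum:regulator} makes $(\Pi,\Gamma,\Phi)$ unique and dependent only on $P$ and $N$. Write \eqref{eq:nominal_regulation_control_sys_control} as
\[
\mat{cc}{\Ac-I & \Bc\\ \Cc & \Dc}\mat{c}{\Theta\\ \Phi} = \mat{c}{0\\ \Gamma}.
\]
This states that the controller, driven by the constant signal $y=\Phi d$ from the initial state $\Theta d$, stays at equilibrium and emits $u=\Gamma d$. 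The internal model will be the minimal system $\Sigma_{\min}$ that generates $\Gamma d$ from a constant exogenous state. Concretely, I take $\Sigma_{\min}$ to have state $\eta\in\R^{r}$, where $r=\mathrm{rank}(\Gamma)$, and the realization
\[
\eta_{k+1}=\eta_k+v_k,\qquad u_k = \Gamma_0\eta_k + \Gamma_0 v_k ,
\]
with $\Gamma=\Gamma_0 R$ a full-rank factorization ($\Gamma_0$ full column rank). This is a bank of integrators realizing the constant modes in $\mathrm{ran}(\Gamma)$, and it depends only on $\Gamma$ (hence on $(\Pi,\Gamma,\Phi)$). I will adjust the exact feedthrough convention so that it matches the block diagram in Fig.~\ref{fig:internal_model_factor}.

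The key step is a change of coordinates in the controller state space. Minimality of $(\Ac,\Bc,\Cc,\Dc)$ gives full row rank of $(\Ac-I\ \ \Bc)$. Proposition \ref{prop:fixed_point_encoding} and Assumption \ref{assum:disturbance_detec} should further give that $\mat{c}{\Theta\\\Phi}$ carries enough rank to identify an $r$-dimensional $\Ac$-invariant subspace of eigenvalue $1$ on which $\Cc$ reproduces $\Gamma_0$. I would then choose a state transformation $T$ such that $T\xi=(\eta,\xi_c)$, where the $\eta$-block evolves as a pure integrator $\eta_{k+1}=\eta_k+(\text{stuff})$ and $u=\Gamma_0\eta+(\text{stuff})$. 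Here ``stuff'' collects all terms depending on $(\xi_c,y)$, and I define these to be the output $v$ of the core subcontroller. Reading off the remaining rows yields $\Sco$ with input $y$, state $\xi_c$, and outputs $(v,\text{direct part of }u)$. The constraint that the cascade reproduce $(\Ac,\Bc,\Cc,\Dc)$ while satisfying \eqref{eq:nominal_regulation_control_sys_control} becomes linear in the core matrices once $\Theta$ is fixed: substituting $\Theta=T^{-1}\mat{c}{\Theta_\eta\\\Theta_c}$ into the regulator equation gives affine equations $\mat{cc}{\Acl_c-I&\Bcl_c}\mat{c}{\Theta_c\\\Phi}=0$ together with $\Ccl_c\Theta_c+\Dcl_c\Phi=0$ on the $v$-channel (the core must output $v=0$ at steady state). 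These define the affine space $\linfo$-style set $\cn{\mathcal{L}_{\text{core}}}(\Theta)$.

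The main obstacle is the existence of the integrator block inside $\Kc$, namely showing that $\Ac$ has eigenvalue $1$ with geometric multiplicity at least $r$ in directions that $\Cc$ maps onto $\mathrm{ran}(\Gamma)$. My approach is contradiction through the test quadratics. By Theorem \ref{thm:convergence}, $P^\bm\star\Kc$ is stable and regulates $e$ for every constant $d$. By Assumption \ref{assum:disturbance_detec}, $d$ is observable from $y$, so a nonzero steady-state $\Gamma d$ must be produced internally from steady-state $\Phi d$. If instead $\Gamma d\neq 0$ while $\Phi d$ is not generated by any invariant mode, perturbing $b$ along that direction keeps $\Phi d$ but changes $\Gamma d$, contradicting uniqueness in \eqref{eq:nominal_regulation_control_sys_control}. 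I expect the rank bookkeeping between $\Phi$, $\Gamma$ and $\Theta$, which combines minimality with the full column rank in Assumption \ref{assum:regulator}, to be the delicate part.
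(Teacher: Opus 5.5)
There is a genuine gap, and it starts with the size of your internal model. You take $r=\mathrm{rank}(\Gamma)$ integrators and claim that $\Ac$ must have eigenvalue $1$ with at least that geometric multiplicity. This is false in general. Only the disturbance directions that are invisible in the steady-state measurement, i.e.\ $d\in\mathrm{null}(\Phi)$, have to be generated inside $\Kc$. Directions with $\Phi d\neq 0$ can be reproduced from $y$, either through feedthrough or through loops closed via $D_2$.

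Here is a concrete counterexample. Take $s=c=1$ and the static network $z=u$, $y=(w,u)$, i.e.\ $D_1=0$, $D_{12}=1$, $D_{21}=(1,0)^\top$, $D_2=(0,1)^\top$. Assumptions \ref{assum:well_posed}--\ref{assum:disturbance_detec} all hold; the last one holds because $D_{21}^{\mathbf{m}}=(m,0)^\top$ with $m>0$. The plant regulator equation gives $\Gamma=-1$ and $\Phi=(0,-1)^\top$, so $\mathrm{rank}(\Gamma)=1$ while $\mathrm{null}(\Phi)=\{0\}$. Now take the minimal controller $\xi_{k+1}=-\alpha y^1_k+y^2_k$, $u_k=\xi_k$. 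It has $\Ac=0$ and satisfies \eqref{eq:nominal_regulation_control_sys_control} with $\Theta=-1$. Closing the loop gives plain gradient descent $\xi_{k+1}=\xi_k-\alpha\nabla f(\xi_k)$, which converges for $\alpha\in(0,2/L)$. No state transformation $T$ can exhibit an integrator in this one-state realization, so your construction cannot be carried out. The correct count is $r=\dim\mathrm{null}(\Phi)$, which is $0$ here.

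Even where the two counts coincide (e.g.\ the direct interconnection, where $\mathrm{rank}(\Gamma)=\dim\mathrm{null}(\Phi)=c$), you do not actually establish the integrator block. Proposition \ref{prop:fixed_point_encoding} only concerns Kronecker-structured direct interconnections, and the perturbation-of-$b$ contradiction is never made precise.

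The missing step is elementary: restrict the controller regulator equation to $\mathrm{null}(\Phi)$.
\begin{itemize}
\item Take an invertible $R=(R_1\ R_2)$ with $\mathrm{ran}(R_1)=\mathrm{null}(\Phi)$. Then \eqref{eq:nominal_regulation_control_sys_control} gives $\Ac\Theta R_1=\Theta R_1$ and $\Cc\Theta R_1=\Gamma R_1=:\Gamma_1$.
\item Assumption \ref{assum:disturbance_detec}, via the PBH test at $\lambda=1$ for the augmented pair after the shift $x^N\mapsto x^N-\Pi d$, yields $\mathrm{null}(\Phi)\cap\mathrm{null}(\Gamma)=\{0\}$. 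Hence $\Gamma_1$, and therefore $\Theta R_1$, has full column rank $r$.
\item Choose a similarity $Q$ with $Q^{-1}\Theta R=\begin{pmatrix}-I_r&\Theta_{12}\\0&\Theta_{22}\end{pmatrix}$. This forces the first block column of $Q^{-1}\Ac Q$ to be $\begin{pmatrix}I_r\\0\end{pmatrix}$ and the first block of $\Cc Q$ to be $-\Gamma_1$.
\item That is exactly the cascade of $\eta_{k+1}=\eta_k+\tilde u_1$, $u=-\Gamma_1\eta+\tilde u_2$ with a core whose remaining matrices satisfy the affine equations read off from the other block columns. Minimality of $\Kc$ is never needed.
\end{itemize}

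Two smaller points also need fixing. First, the core must drive $u$ through its own output $\tilde u_2$ with identity gain, not through $\Gamma_0$ as in your $u=\Gamma_0\eta+\Gamma_0 v$, which would confine $u$ to $\mathrm{ran}(\Gamma)$. Second, $\mathcal{L}_{\text{core}}(\Theta)$ must also include the $u$-channel equation with right-hand side $\Gamma_1\Theta_{12}+\Gamma_2$, not only the $v$-channel one.
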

\begin{proof}

By Assumption \ref{assum:regulator}, the solution $(\Pi, \Gamma, \Phi)$ {of} \eqref{eq:nominal_regulation_control_sys_plant} is unique.
{We introduce} an invertible coordinate-change matrix $R \in \R^{sc \times sc}$ with the partitioning $R = (R_1 \ R_2)$ such that $\text{ran}(R_1) = \text{null}(\Phi)$. We partition accordingly
\begin{align}
    \Pi R &= \mat{ccc}{\Pi_1 & \Pi_2},  & 
    \Gamma R &= \mat{ccc}{\Gamma_1 & \Gamma_2 }, & \Phi R &= \mat{ ccc}{0& \Phi_2}, &  \Theta R &= \mat{cc}{\Theta_1 & \Theta_2}.
    \label{eq:basis_transform}
\end{align}

Right-multiplying both sides of the regulator equation in \eqref{eq:nominal_regulation_control_sys_control} by  $R$ yields
    \begin{align}
\mat{ccc}{\Ac&\Bc\\ \Cc&\Dc}
\mat{ccc}{\Theta_1 & \Theta_2  \\0 & \Phi_2 }  &= \mat{ccc}{\Theta_1 & \Theta_2\\ \Gamma_1 & \Gamma_2} \label{eq:nominal_regulation_control_sys_trans_control}. 
    \end{align}
\eqref{eq:nominal_regulation_control_sys_control} under Assumptions \ref{assum:regulator} obeys $\text{rank}(\Theta_1) = \text{dim}(\text{null}(\Phi))$ by Lemmas \ref{lem:disjoint_null} and  \ref{lem:theta_injective}.
It is then possible to choose an  invertible matrix $Q$ to transform the controller state as $\xi \rightarrow Q \xi$  such that
\begin{align}
    Q^{-1} \Theta R &= \mat{cc}{-I_r & \Theta_{12} \\ 0 & \Theta_{22}}. \label{eq:theta_12_22}
\end{align}
Applying $Q$ as a similarity transformation to the representation of $\Kc$ in \eqref{eq:nominal_regulation_control_sys_trans_control} leads to
\begin{align}
\mat{cc}{Q^{-1}\Ac Q& Q^{-1}\Bc \\ \Cc Q&\Dc}
\mat{cc}{-I_r & \Theta_{12}    \\ 0 & \Theta_{22}  \hdl 0 & \Phi_2 }  &=  \mat{cc}{-I_r & \Theta_{12} \\ 0 & \Theta_{22}   \hdl \Gamma_1 & \Gamma_2  }. \label{eq:theta_equate_orig}
\end{align}
{Now} we partition the matrices of the {transformed controller representation} according
to {\eqref{eq:nominal_regulation_control_sys_trans_control}} to get
\begin{align}
    \mat{cc|c}{\Ac_{11}'&\Ac_{12}'&\Bc_1'\\\Ac_{21}'&\Ac'_{22}&\Bc'_2\hl \Cc'_1&\Cc'_2&\Dc'} &:= \mat{c|c} {Q^{-1}\Ac Q& Q^{-1} \Bc \hl \Cc Q&\Dc}.\label{eq:matrix_Q_partition}
\end{align}

Substitution of \eqref{eq:matrix_Q_partition} into \eqref{eq:theta_equate_orig} leads to
the {following structural constraints:}
\begin{align}
   \mat{cc|c}{\Ac_{11}'\\\Ac_{21}'\hl \Cc_1'} &=  \mat{cc|c}{I_r  \\
     0 \hl -\Gamma_1 },  &   \mat{cc}{\Ac_{12}'&\Bc_1'\\ \Ac_{22}'&\Bc_2'\hl \Cc_2'&\Dc'} \mat{cc}{\Theta_{22}   \\ \Phi_2 }  &=  \mat{cc }{ 0 \\  \Theta_{22}    \\ \Gamma_1 \Theta_{12} + \Gamma_2  }. \label{eq:structural_constraint}
\end{align}
The controller structure in \eqref{eq:structural_constraint} can be recognized as a cascade between an internal model subsystem $\Sigma_{\min}$ with internal dynamics $I_r$ and a subcontroller $\cn{\cn{\Sigma_{\mathrm{core}}}}$
\begin{align}
  \Sigma_{\min}: \qquad & \mas{c|cc}{
I_r    &I&0 \hl
-\Gamma_1  &0&I
}, & \text{and} & &  \cn{\Sigma_{\mathrm{core}}}: \qquad  & \mas{c|cc}{
\Ac_{22}' &\Bc_2'  \hl
\Ac_{12}' &\Bc_1' \\
\Cc_2'    &\Dc'
 } = \mas{c|cc}{\Aco & \Bco \hl
\Cco_1 & \Dco_1  \\ \Cco_2 & \Dco_2 }.\label{eq:sys_factorize}
\end{align}

The affine constraint $\cn{\mathcal{L}_{\text{core}}}(\Theta)$ imposed on the matrices of the core subcontroller is
\begin{align}
  \label{eq:structural_constraints}
\cn{\mathcal{L}_{\text{core}}}(\Theta): & & \mat{cc}{
   \Aco&\Bco \\
    \Cco_1&\Dco_1 \\
    \Cco_{2}&\Dco_2} \mat{c}{\Theta_{22}   \\ \Phi_2 }  &=  \mat{c }{ 0 \\  \Theta_{22}   \\ \Gamma_1 \Theta_{12} + \Gamma_2  },  
\end{align}
in which the notation $\cn{\mathcal{L}_\text{core}}(\Theta)$ refers to the dependence on $(\Theta_{12}, \Theta_{22})$ from \eqref{eq:theta_12_22}.
The   cascade {interconnection} $\Kc = \Sigma_{\min}  \cn{\Sigma_{\text{core}}}$
{hence admits the representation}
\begin{align}
      \mas{c|c}{\Ac&\Bc\hl \Cc&\Dc} = \mas{c|c}{Q^{-1}\Ac Q &Q^{-1}\Bc \hl  \Cc Q & \Dc} &= \mas{c|c:cc}{
I_r     &I_r&0 \hl
-\Gamma_1  &0&I_{n_u}
}  \mas{c|cc}{\Aco & \Bco \hl
\Cco_1 & \Dco_1  \\ \Cco_2 & \Dco_2 }. \label{eq:controller_factorization_minimal}
\end{align}

\end{proof}

The controller factorization in \eqref{eq:controller_factorization_minimal} is visualized in Figure \ref{fig:internal_model_general}.

\begin{figure}[h]
\centering
\begin{tikzpicture}[xscale=1,yscale=1,baseline=(ko1)]
\def\dl{2*\dn}
\def\ds{3*\dn}

\node[sy3] at (0, 0) (g) {
$
\mas{c|cc}{
I_r     &I&0 \hl
-\Gamma_1  &0&I}
$};

\node[sy3,left=4*\dl of g] (k) {
$
\mas{c|cc}{\Aco & \Bco \hl
\Cco_1 & \Dco_1  \\ \Cco_2 & \Dco_2 }
$
};

\node[sy3, left=8*\dl of k] (g0) {
$
\mas{c|c}{
\Ac &  \Bc \hl 
\Cc &  \Dc}
$
};

\tio{g0i1}{g0}{west}{1/2};
\tio{g0o1}{g0}{east}{1/2};

\draw[->] (g0o1)--  node[pos=.5, swap]{$u$}   ($(g0o1) + (2*\ds, 0)$)  ;
\draw[<-] (g0i1)--   node[pos=.5]{$y$}  ($(g0i1) + (-2*\ds, 0)$);

\tio{ki1}{k}{west}{1/2};
\tio{ko1}{k}{east}{1/2};

\tio{i2}{g}{west}{1/2};
\tio{o2}{g}{east}{1/2};
\draw[->] (o2)--  node[pos=.5, swap]{$u$}   ($(o2) + (2*\ds, 0)$)  ;
\draw[<-] (ki1)--   node[pos=.5]{$y$}  ($(ki1) + (-2*\ds, 0)$);
\draw[->] (ko1)--   node[pos=.5, swap]{$\tilde{u}$} (i2);

\end{tikzpicture}
    \caption{Factorization of {$\Kc$ (left) into an internal model and a core controller} (right)} %

    \label{fig:internal_model_general}
    \vspace{-3mm}
\end{figure}
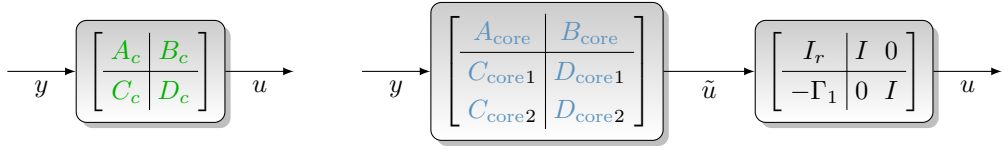

    The structural results in Theorem \ref{thm:main_structure} apply to algorithms that are convergent for the class $\mathcal{O}_{m, L}$ of $(m, L)$-bounded strongly convex composite optimization problems with unique $(\beta^*, w^*)$ pairs. Algorithms that are convergent for broader classes of problems than $\mathcal{O}_{m, L}$, such as convex optimization problems with multiple optimal solutions,  must  also satisfy these properties.

\subsection{Internal Model Structures with Direct Interconnections}

We now explore  structures of optimization algorithms under  a direct interconnection in \eqref{eq:direct_interconnection} ({such that the algorithm reads as $F\star G=F\star\Kc$ without network dynamics}), and demonstrate factorizations of existing optimization algorithms for composite optimization. The matrices $\Gamma = -\mat{cc}{\1_s \otimes I_c & 0}$ and $\Phi = \mat{cc}{0 & N}$ from \eqref{eq:gam_phi} with  $r = \text{dim}(\text{null}(\Phi))   = c$ and  form the internal model and the affine constraint
\vspace{-2mm}
\begin{align}
       \label{eq:core_integrator}
       \Sigma_{\text{min}}: \; & 
       \setlength{\arraycolsep}{2pt}
       \mas{c|cc}{
1     &1 & 0 \hl
\1_s  &0 & I_s}  \!\otimes\! I_c, & 
\cn{\mathcal{L}_{\text{core}}}(\Theta):
\setlength{\arraycolsep}{2pt}
\mat{cc}{
   \Aco&\Bco \\
    \Cco_1&\Dco_1 \\
    \Cco_{2}&\Dco_2} 
    \! \!
    \setlength{\arraycolsep}{1pt}
    \mat{c}{\Theta_{22} \\  N} 
    \!=\! \mat{c}{
    \Theta_{22} \\
    0 \\
    -(\1_s \!\otimes\! I_c) \Theta_{12} }.
\end{align}

The integrator cascade in \eqref{eq:core_integrator} is visualized in Figure \ref{fig:integrator_at_the_output}.
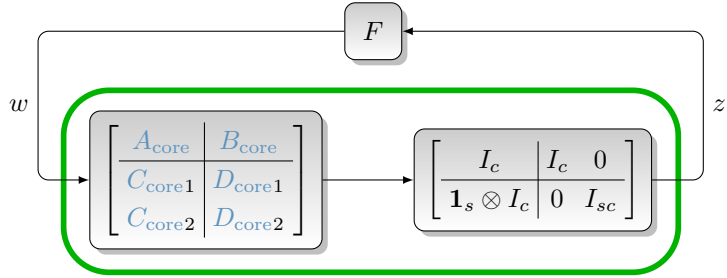
\begin{figure}[h]
    \centering
    \begin{tikzpicture}[xscale=1,yscale=1,baseline=(ko1)]
\def\dl{2*\dn}
\def\ds{3*\dn}

\node[sy3] (F3) at (0, 0) {
$
F
$};

\node[sy3, below right =2*\dl and 0.5*\dl of F3] (g) {
$
\mas{c|cc}{
I_c     &I_c & 0 \hl
\1_s \otimes I_c  &0 & I_{sc}}  
$};

\node[sy3,left=4*\dl of g] (k) {
$
\mas{c|c}{
\Aco &  \Bco \hl 
\Cco_1&  \Dco_1 \\ 
\Cco_2 &  \Dco_2}
$
};

\tio{ki1}{k}{west}{1/2};
\tio{ko1}{k}{east}{1/2};

\tio{iF}{F3}{east}{1/2};
\tio{oF}{F3}{west}{1/2};
\draw[<-] (ki1)--   ($(ki1) + (-1.5*\ds, 0)$)   |- node[pos=.25]{$w$} (oF);

\tio{i2}{g}{west}{1/2};
\tio{o2}{g}{east}{1/2};
\draw[->] (o2)--   ($(o2) + (1.5*\ds, 0)$)   |- node[pos=.25, swap]{$z$} (iF);

\draw[->] (ko1)--   node[pos=.5]{} (i2);

\node[
    draw=green!60!black,
    line width=.3ex,
    rounded corners,
    dashed,
    fit=(k)(g),
    inner xsep=0.8em,
    inner ysep=0.5em
] (kbox) {};

\end{tikzpicture}
    \caption{Decomposition of an algorithm with a direct interconnection}
    \label{fig:integrator_at_the_output}
\end{figure}

 Existing algorithms can be decomposed using our structural results.

\begin{example}
\label{exmp:factorize_existing}
    We perform factorizations of the Chambolle-Pock \cite{chambolle2011first}, Fast Iterative Shrinkage-Thresholding Algorithm (FISTA) \cite{beck2009fast},
     and Forward-Reflected-Backward splitting (FRBS) \cite{malitsky2020forward} algorithms.
We list the Kronecker-structured original system $G^a$ and the core subcontroller $\Sco^a$.

    The  Chambolle-Pock algorithm \cite{chambolle2011first} with parameters $(\tau_1, \tau_2, \theta)$ as
    \vspace{-5mm}
    \begin{align*}
     \\ \nonumber
    G^a&\!=\! 
    \setlength{\arraycolsep}{1.5pt}\mas{cc|cc}{
        1 & -\tau_1 & -\tau_1 & 0 \\
        0 & 0 & 0 & 1 \hl
        1 & -\tau_1 & -\tau_1 & 0 \\
        1 & \frac{1}{\tau_2} \!-\! \tau_1(1\!+\!\theta) & -\tau_1(1\!+\!\theta) & -\frac{1}{\tau_2}
    }, &
    \cn{\Sigma_{core}^a}&\!=\!
    \setlength{\arraycolsep}{1.5pt}
    \mas{c|cc}{
        0 & 0 & 1 \hl
        -\tau_1 & -\tau_1 & 0 \hdl
        -\tau_1 & -\tau_1 & 0 \\
        \frac{1}{\tau_2}\!-\!\tau_1(1\!+\!\theta) & -\tau_1(1\!+\!\theta) & -\frac{1}{\tau_2}
    },
    \intertext{Fast Iterative Shrinkage-Thresholding Algorithm (FISTA) \cite{beck2009fast} with parameters $(q, \lambda)$ decomposes as }
   G^a&=\mas{cc|cc}{
       \frac{2}{1+\sqrt{q}} & -\frac{1-\sqrt{q}}{1+\sqrt{q}}& -\lambda & -\lambda\\
       1 & 0 & 0 & 0 \hl
       \frac{2}{1+\sqrt{q}} & -\frac{1-\sqrt{q}}{1+\sqrt{q}}& 0 & 0 \\
       \frac{2}{1+\sqrt{q}} & -\frac{1-\sqrt{q}}{1+\sqrt{q}}& \lambda & \lambda
    }, \nonumber & \cn{\Sigma_{core}^a}&= \mas{c|cc}{
    -\frac{\sqrt{q}-1}{\sqrt{q}+1} & \lambda & \lambda \hl
    \frac{\sqrt{q}-1}{\sqrt{q}+1} & -\lambda & -\lambda \hdl
    \frac{\sqrt{q}-1}{\sqrt{q}+1} & 0 & 0 \\
    \frac{\sqrt{q}-1}{\sqrt{q}+1} & -\lambda & -\lambda
    }. \nonumber \\
        \intertext{Forward-Reflected-Backward splitting (FRBS) \cite{malitsky2020forward} with parameter $\lambda$ decomposes as }
        G^a&= \mas{ccc|cc}{
        1 & 0 & \lambda & -2\lambda & -\lambda \\
        1 & 0 & 0 & 0 & 0 \\
        0 & 0 & 0 & 1 & 0 \hl
        1 & 0 & 0 & 0 & 0 \\
        1 & 0 & \lambda & -2\lambda & -\lambda \\
    }, &
    \cn{\Sigma_{core}^a}&= \mas{cc|cc}{
        0 & -\lambda & -2\lambda & \lambda \\
        0 & 0 & 1 & 0 \hl
        0 & \lambda & 2\lambda & -\lambda\hdl
        0 & 0 & 0 & 0 \\
        0 & \lambda & -2\lambda & -\lambda
    }.
\end{align*}

In each case, a {state-coordinate change $Q$} exists such that the first column of $\mathcal{C}^a$ is $\1_s$. The similarity transformations and $\Theta$ matrices for each example are
\begin{align}
   Q_{\textrm{Chambolle-Pock}}^a &=  \mat{cc}{1 & 0 \\ 0 & 1}, & Q_{\textrm{FISTA}}^a &=  \mat{cc}{1 & 0 \\ 1 & 1}, & Q_{\textrm{FRBS}}^a &= \mat{ccc}{1 & 0 & 0 \\ 1 & 1 & 0 \\ 0 & 0 & 1}, \\
     \Theta^a_{\textrm{Chambolle-Pock}} &=  \mat{cc}{1 & 0 \\ 0 & 1}, & \Theta^a_{\textrm{FISTA}} &= \mat{cc}{1  & 0 \\1 &  0},  &  \Theta^a_{\textrm{FRBS}}  &= \mat{cc}{1 & 0  \\ 1 & 0 \\ 0 & -1}.
\end{align}
The controller $\cn{\Sigma_{\text{core}}}$ can then be read from the system matrices after performing the similarity transformation.

\end{example}

\begin{example}

The set of Kronecker-structured optimization algorithms with $s$ states and block-lower-triangular sparsity patterns is completely characterized by a three-parameter family $(b_0, b_{11}, b_{21}) \in \R^3$. These three parameters define a family of subcontrollers $\Sco$, with regulator equation certificate  $ \Theta_{12} = \mat{c:cc}{-1 & -b_{21} & 0_{1 \times s-1}} \otimes I_c.$ The internal model factorization of this class of algorithms is  visualized in Figure \ref{fig:static_cascade}.

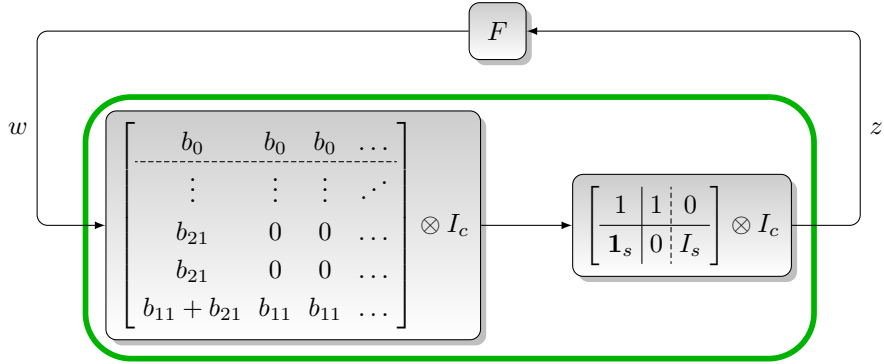
\begin{figure}[h]
    \centering
    \begin{tikzpicture}[xscale=1,yscale=1,baseline=(ko1)]
\def\dl{2*\dn}
\def\ds{3*\dn}

\node[sy3] (F3) at (0, 0) {
$
F
$};

\node[sy3, below right =4*\dl and 2*\dl of F3] (g) {
$
\mas{c|c:c}{
1     &1 & 0 \hl
\1_s &0 & I_{s} }  \otimes I_c  
$};

\node[sy3,left=4*\dl of g] (k) {
$
\mas{cccc}{
     b_0  & b_0 & b_0   & \ldots\hdl
     \vdots & \vdots & \vdots & \ddots  \\
      b_{21}  & 0 & 0  & \hdots \\
     b_{21}  & 0 & 0  & \hdots \\
      b_{11} + b_{21} & b_{11}  & b_{11} & \hdots
    } \otimes I_c  
$
};

\node[
    draw=green!60!black,
    line width=.3ex,
    rounded corners,
    dashed,
    fit=(k)(g),
    inner xsep=0.8em,
    inner ysep=0.5em
] (kbox) {};

\tio{ki1}{k}{west}{1/2};
\tio{ko1}{k}{east}{1/2};

\tio{iF}{F3}{east}{1/2};
\tio{oF}{F3}{west}{1/2};
\draw[<-] (ki1)--   ($(ki1) + (-2*\ds, 0)$)   |- node[pos=.25]{$w$} (oF);

\tio{i2}{g}{west}{1/2};
\tio{o2}{g}{east}{1/2};
\draw[->] (o2)--   ($(o2) + (2*\ds, 0)$)   |- node[pos=.25, swap]{$z$} (iF);

\draw[->] (ko1)--   node[pos=.5]{} (i2);

\end{tikzpicture}
    \caption{Lower-triangular structured algorithm factorization}
    \label{fig:static_cascade}
    \vspace{-5mm}
\end{figure}

Table \ref{tab:basis_parameter} summarizes these {parameters for} existing optimization algorithms. 

\begin{table}[h]
    \centering
    \begin{tabular}{l c c  c c c}
    \toprule
    Algorithm & Parameters &  $s$ & $b_0$ &  $b_{11}$ & $b_{21}$ \\ \midrule      Gradient Descent & $\alpha$ & 1 &  $-\alpha$ & 0 & 0  \\
         Proximal Point Method & $\lambda$ & 1 &  $-\lambda$ & $-\lambda$ & 0  \\
         Douglas Rachford \cite{douglas1956numerical}& $\gamma, \lambda$ & 2 & $-\gamma \lambda$ &  $-\gamma$& $-\gamma$\\
         Forward-Backward \cite{passty1979ergodic} &$\gamma$ &  2 & $-\gamma$ &  $-\gamma$ & 0\\
         Davis-Yin \cite{davis2017three}&  $\gamma, \lambda$ & 3 & $-\gamma \lambda$ &$-\gamma $& $-\gamma$ \\
         \bottomrule
    \end{tabular}
    \caption{Parameters describing static-core algorithms }
    \label{tab:basis_parameter}
    \vspace{-5mm}
\end{table}

\end{example}

\subsection{Information Order Bounds}

A second structural property of optimization algorithms is induced by an interplay between well-posedness and information constraints.
An example includes the requirement that the matrix $\Dcl$ from $G = P \star \Kc$ must be block-lower-triangular, in order to admit  evaluation by the procedure in \eqref{eq:algorithm_causal_eval}. {More generally, we choose} a set $\linfo \subseteq \R^{n_u \times n_y}$ {such that} $\Dc \in \mathcal{L}_{\mathrm{info}}$ implies that the algorithm $F \star (P \star \Kc)$ obeys {a desired information} constraint on $\Dcl$. {
sets $\linfo$, because the expression $\Dcl=D_1 + D_{12} (I - \Dc D_2 )^{-1} \Dc D_{21}$  from  \eqref{cll} is nonlinear in $\Dc$ if $D_2 \neq 0$.}

\begin{theorem}
\label{thm:info_structure}
Under Assumptions \ref{assum:well_posed}-\ref{assum:stab_dect}, consider a network $P$, a function class $\mathcal{O}_{m, L}$, a set $\linfo \subseteq \R^{n_u \times n_y}$ arising from the information constraint, and a given solution $(\Pi, \Gamma, \Phi)$   to \eqref{eq:nominal_regulation_control_sys_plant}.
Define $r_\Gamma := \text{dim}(\text{null}(\Gamma))$ and $r_{\Gamma \Phi} := \text{dim}(\text{null}(\Phi) \cap \text{null}(\Gamma)), $ and let $r_{\mathrm{info}}$ be a lower-bound on the solution {of the optimization problem}
\begin{subequations}
\label{eq:wpc}
\begin{align}
    r_{\mathrm{info}} \leq  \text{min} & \ \mathrm{rank}(\Dc) \\
      \text{s.t.}  & \ \Dc \in \linfo \ \text{such that $F\star(P\star\Kc)$ is well-posed for all
     $F \in \mathcal{O}_{m, L}$}. \label{eq:well_posed_condition}
\end{align}
\label{eq:rank_opt_cont}
\end{subequations}
Let  $\Kc$ be a controller with representation $(\Ac, \Bc, \Cc, \Dc)$ such that $\Dc \in \linfo,$  $F \star (P \star \Kc)$ is a convergent optimization algorithm for all $F\in \mathcal{O}_{m, L}$, and there exists a $\Theta$ such that $(P, \Kc)$ obeys the regulator equation  \eqref{eq:nominal_regulation_control_sys} with solution $(\Pi, \Gamma, \Phi, \Theta)$. Then $\Kc$   has at least 
$r_{\mathrm{info}} + r_\Gamma- r_{\Gamma \Phi} - n_y$ states.
\label{thm:minimal_lifting}
\end{theorem}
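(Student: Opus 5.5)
The plan is to read a rank inequality directly off the controller part of the regulator equation \eqref{eq:nominal_regulation_control_sys_control}. Let $n_\xi$ denote the number of controller states. The relevant block rows are
\begin{align*}
\Ac\Theta+\Bc\Phi=\Theta, \qquad \Cc\Theta+\Dc\Phi=\Gamma .
\end{align*}
The second row gives $\Dc\Phi=\Gamma-\Cc\Theta$, and this identity will drive the whole argument.

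First, since $\Dc\in\linfo$ and the algorithm is well-posed for all $F\in\c{O}_{m,L}$, $\Dc$ is feasible for \eqref{eq:rank_opt_cont}. Hence $\mathrm{rank}(\Dc)\ge r_{\mathrm{info}}$.

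Second, I will obtain an upper bound on $\mathrm{rank}(\Dc)$ in terms of $n_\xi$. Let $d$ be the number of columns of $\Phi$, $\Gamma$, $\Theta$ (the disturbance dimension). Write $\R^{n_y}=\mathrm{ran}(\Phi)\oplus W$ for some complement $W$, with $\dim W=n_y-\mathrm{rank}(\Phi)$. Then
\begin{align*}
\mathrm{rank}(\Dc)\le \mathrm{rank}(\Dc\Phi)+\dim W=\mathrm{rank}(\Gamma-\Cc\Theta)+n_y-\mathrm{rank}(\Phi).
\end{align*}
This step seems naive, and I will reconcile it with the exact constant in the statement below.

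Next, I bound $\mathrm{rank}(\Gamma-\Cc\Theta)$ from below. On $\mathrm{null}(\Phi)$ we have $\Gamma v=\Cc\Theta v$. Restricting the second row to $\mathrm{null}(\Phi)$ gives
\begin{align*}
\mathrm{rank}(\Gamma|_{\mathrm{null}\Phi})\le\mathrm{rank}(\Theta)\le n_\xi ,
\end{align*}
and $\mathrm{rank}(\Gamma|_{\mathrm{null}\Phi})=\dim\mathrm{null}(\Phi)-r_{\Gamma\Phi}$. This yields a lower bound on $n_\xi$, but it does not involve $\Dc$. The combination that produces the stated count must instead work on $\mathrm{null}(\Gamma)$. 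For $v\in\mathrm{null}(\Gamma)$ we have $\Dc\Phi v=-\Cc\Theta v$. The space $\mathrm{null}(\Gamma)$ has dimension $r_\Gamma$, and $\Phi$ maps it onto a subspace of dimension $r_\Gamma-r_{\Gamma\Phi}$. On the image $\Phi\,\mathrm{null}(\Gamma)$, $\Dc$ acts as $-\Cc\Theta$ composed with a right inverse. Consequently
\begin{align*}
\mathrm{rank}\bigl(\Dc|_{\Phi\,\mathrm{null}(\Gamma)}\bigr)\le \mathrm{rank}(\Theta)\le n_\xi .
\end{align*}
Choosing a complement of $\Phi\,\mathrm{null}(\Gamma)$ in $\R^{n_y}$, of dimension $n_y-(r_\Gamma-r_{\Gamma\Phi})$, gives
\begin{align*}
\mathrm{rank}(\Dc)\le n_\xi+n_y-r_\Gamma+r_{\Gamma\Phi}.
\end{align*}

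Combining this with $\mathrm{rank}(\Dc)\ge r_{\mathrm{info}}$ gives exactly
\begin{align*}
n_\xi\ge r_{\mathrm{info}}+r_\Gamma-r_{\Gamma\Phi}-n_y ,
\end{align*}
which is the claimed bound.

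The main obstacle is the middle inequality. I need the rank of a linear map on the whole space to be at most its rank on a subspace $S$ plus the codimension of $S$, i.e. $\mathrm{rank}(\Dc)\le\mathrm{rank}(\Dc|_S)+n_y-\dim S$, applied with $S=\Phi\,\mathrm{null}(\Gamma)$. I also need $\dim S=r_\Gamma-r_{\Gamma\Phi}$, which holds by rank–nullity for $\Phi$ restricted to $\mathrm{null}(\Gamma)$, whose kernel is $\mathrm{null}(\Phi)\cap\mathrm{null}(\Gamma)$. Finally, $\Dc|_S$ factors through $\Theta$: any $y\in S$ can be written $y=\Phi v$ with $v\in\mathrm{null}(\Gamma)$, and then $\Dc y=-\Cc\Theta v$, so $\Dc(S)\subseteq\Cc\Theta(\mathrm{null}(\Gamma))$. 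These are elementary linear algebra facts, and I will verify each explicitly. Assumptions \ref{assum:well_posed}--\ref{assum:stab_dect} are used only to make Theorem \ref{thm:convergence} applicable, so that $\Theta$ exists.
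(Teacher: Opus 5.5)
Your argument is correct and is essentially the paper's proof: the paper also restricts the identity $\Cc\Theta=\Gamma-\Dc\Phi$ to $\mathrm{null}(\Gamma)$ (via a basis matrix $\Xi_2$), uses $\dim \Phi\,\mathrm{null}(\Gamma)=r_\Gamma-r_{\Gamma\Phi}$ together with $\mathrm{rank}(\Dc|_S)\ge \dim S-\dim\mathrm{null}(\Dc)$, and bounds the state dimension from below by $\mathrm{rank}(\Cc\Theta)$. This is your inequality $\mathrm{rank}(\Dc)\le n_\xi+n_y-r_\Gamma+r_{\Gamma\Phi}$ read in the other direction. The exploratory detours through a complement of $\mathrm{ran}(\Phi)$ and the restriction to $\mathrm{null}(\Phi)$ are not needed and can be dropped.
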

\begin{proof}
{If $\Cc$ has $\tau$ columns, the number of states of $\Kc$ with the representation $(\Ac, \Bc, \Cc, \Dc)$ is $\tau$.
Since $\tau\geq\rank{\Cc}\geq \rank{\Cc\Theta}$, it suffices to show 
$\rank{\Cc\Theta}\geq r_{\mathrm{info}} + r_\Gamma- r_{\Gamma \Phi} - n_y$.}
{We exploit the} bottom equation in    \eqref{eq:nominal_regulation_control_sys_control} {which reads} $\Cc \Theta = \Gamma - \Dc \Phi$.
  Let $\Xi = (\Xi_1\ \Xi_2)$ be an invertible matrix such that $\text{ran}(\Xi_2) = \text{null}(\Gamma)$ and {where}   $\Xi_2$ has full column rank. We {infer}
  \vspace{-3mm}
  \begin{subequations}
  \begin{multline}
      \text{rank}(\Cc \Theta) = \text{rank}(\Gamma - \Dc \Phi) \\ =  \text{rank}((\Gamma - \Dc \Phi) \Xi) =  \text{rank}\mat{cc}{\Gamma \Xi_1 - \Dc \Phi \Xi_1, & -\Dc \Phi \Xi_2}
  \end{multline}

\vspace{-3mm}
  
  \noindent leading to
  \vspace{-2mm}
  \begin{multline}
      \text{rank}(\Cc \Theta) \geq \text{rank}(\Dc \Phi \Xi_2)   = \text{rank}(\Xi_2) - \text{dim}(\text{null}(\Phi) \cap \text{ran}(\Xi_2)) \\ - \text{dim}(\text{null}(\Dc) \cap \text{ran}(\Phi \Xi_2)).
  \end{multline}

  \vspace{-2mm}
  
  \noindent Dropping $\text{ran}(\Phi \Xi_2)$ {in the right-most intersection and using} $\text{ran}(\Xi_2) = \text{null}(\Gamma)$ yields
  \begin{align}
      \text{rank}(\Cc \Theta) &\geq  \text{dim}(\text{null}(\Gamma)) - \text{dim}(\text{null}(\Phi) \cap \text{null}(\Gamma)) - \text{dim}(\text{null}(\Dc))\\
       &={r_\Gamma}-r_{{\Gamma\Phi}}- \text{dim}(\text{null}(\Dc)). \notag
  \end{align}
  \vspace{-2mm}
  Noting $\dim(\text{null}(\Dc))=n_y-\rank{\Dc}$, we obtain
  \begin{align}
      \text{rank}(\Cc \Theta) &\geq r_\Gamma- r_{\Gamma \Phi} - n_y + \text{rank}(\Dc) 
      \geq r_{\mathrm{info}} + r_\Gamma - r_{\Gamma \Phi} - n_y.
  \end{align}
  \end{subequations}
\end{proof}

Finding an exact solution to the optimization problem in  \eqref{eq:wpc} may be intractable.
A special case where this is possible involves the direct interconnection \eqref{eq:direct_interconnection} where $\linfo$ enforces block-lower-triangularity of $\Dc$.
By Proposition \ref{prop:invertible}, well-posedness of $F \star \Kc$  for all $F \in \mathcal{O}_{m, L}$ with a block-lower-triangular of $\Dc$ requires that  $[\Dc]_{ii}$  must be invertible for every $i \in \{1, \ldots, s\}$ satisfying $L_i = \infty$. Letting $t := \#(L_i = \infty)$ be the number of possibly nonsmooth oracles in $\mathcal{O}_{m, L}$, any well-posed algorithm $F \star \Kc$ must {satisfy} $\text{rank}(\Dc) \geq tc$.
{Since $(r_\Gamma, r_{\Gamma \Phi}, n_y) = ((s-1)c, 0, sc)$, 
Theorem \ref{thm:minimal_lifting} leads to the lower-bound $(t-1)c$ on the number of states
of $\Kc$.}
This $(t-1)c$ bound is found in  \cite[Theorem 8.1]{morin2024frugal}. The work in \cite{morin2024frugal} builds upon prior {results} in \cite{malitsky2023resolvent}, in which \cite[Theorem 3.3]{malitsky2023resolvent} reports an order bound of $(s-1)c$ for the special case {that} $t=s$.

The case of $t=0$ leads to a valid (but not very useful) lower bound of $-c$ states. Internal Model structures yield controller order lower bounds that remain insightful in the  $t=0$ setting.

\section{Analysis of Optimization Algorithms}

\label{sec:analysis}

In this section, we use LMIs to verify the Robust Stability condition of Theorem \ref{thm:convergence}. Our objective 
is to ensure that {$F \star (P \star \Kc)$} is exponentially convergent for all $F \in\mathcal{O}_{m, L}$.

The alternating  convex method {is based on a full-order internal model $\Sigma_{\textrm{full}}$ such that the  interconnection $\Kc=\Sigma_{\textrm{full}} \star \cf{\Sigma_f}$ for any system $\cf{\Sigma_f}$  will lead to satisfaction
of the regulator equations.} Alternation is performed between the convex problems of searching for a controller $\cf{\Sigma_f}$ to certify convergence, and searching over filter coefficients describing valid dissipation relations satisfied by uncertainties in the class $\mathcal{O}_{m, L}$. The full-order internal model methodology generally yields higher order controllers as compared to structured control, but offers  
convex computational formulation.

\subsection{Exponential Convergence Rates}

The speed of an algorithm may be judged based on exponential convergence rates. {This can} be used 
to analyze the performance of algorithms, and as an objective when synthesizing fast algorithms.
\begin{definition}
\label{defn:rho_convergent}
{Convergence in Definition~\ref{defn:optimization_algorithm} is exponential with rate $\rho \in (0, 1)$ if there exists a $\gamma>0$ such that for all $x_0 \in \R^n,\ F \in \mathcal{O}_{m, L}, k \in \N$, we have}
\begin{align}    
     \max\{\norm{x_k - x^*}_2,\norm{w_k - w^*}_2,\norm{z_k - z^*}_2\} \leq \gamma \rho^{k}\norm{x_0 - x^*}_2.
     \label{eq:rho_convergent_z}
\end{align}
\end{definition}

{Following \cite{scherer2023optimization}, we now sketch how the exponential weighting of signals allows us to develop
guarantees for exponential stability of the algorithmic interconnection in  Theorem~\ref{thm:convergence}.
Given a fixed rate $\rho \in (0, 1)$, the $\rho$-weighting $\bar{x}$ of a sequence $x=(x_k)_{k\in \N}$ is defined as $\bar{x}_k := x_k \rho^{-k}$ for $k\in\N$.
For $\t{F}\in\c{O}_{m,L}^0$, the $\rho$-weighting of the signals $(x^N, \xi, z, w)$ in the interconnection $\tilde{F}\star(P\star\Kc)$ in Theorem~\ref{thm:convergence} leads to
\begin{equation}
\label{eq:algorithm_exp}
\begin{aligned}
    \bar F: & &
    \bar{w}_k &\in \rho^{-k} \tilde{F}(\rho^{k} \bar{z}_k),\\
    \bar P: & &
    \mat{c}{
    \bar{x}^N_{k+1}  \hl \bar{z}_k \hdl \bar{y}_k }
    &=
    \mat{c|c:c}{ \rho^{-1} A &  \rho^{-1} B_1 & \rho^{-1} B_2 \hl
    C_1 & D_1   & D_{12} \hdl
    C_2 & D_{21}  & D_2 }\mat{c}{
    \bar{x}^N_k \hl \bar{w}_k  \hdl  \bar{u}_k},\\
    \bar\Kc: & &
    \mat{c}{\bar \xi_{k+1}  \hl \bar u_k}
    &=
    \mat{c|c}{\rho^{-1}\Ac & \rho^{-1}\Bc \hl\Cc & \Dc}
    \mat{c}{\xi_k \hl y_k}.
\end{aligned}
\end{equation}
If $G=P\star \Kc$ is described by $(\Acl,\Bcl,\Ccl,\Dcl)$, then the system 
$\bar G:=\bar P\star \bar\Kc$ admits a description with $(\rho^{-1}\Acl,\rho^{-1}\Bcl,\Ccl,\Dcl)$.
Hence, $\bar F\star(\bar P\star\bar\Kc)=\bar F\star \bar G$ is 
described by
 \begin{equation}
\label{eq:G_exp}
   \mat{c}{
        \bar x_{k+1}  \hl \bar z_k
    } = \mat{c|c}{ \rho^{-1}\mathcal{A} & \rho^{-1}\mathcal{B} \hl
    \mathcal{C} &  \mathcal{D}}\mat{c}{
        \bar x_k \hl \bar w_k
    },  \qquad   \bar{w}_k \in \rho^{-k} \tilde{F}(\rho^{k} \bar{z}_k),
\end{equation}
where $\bar x$ is defined as $x_k:= \col(\bar{x}_k^N,\bar{\xi}_k)$ for $k\in\N$.
Note that \r{eq:G_exp} emerges from $\t{F}\star G$ 
by $\rho$-weighting the signals $(x,w,z)$,
and both have the fixed point $(0,0,0)$.

We say that \r{eq:G_exp} is  Lyapunov stable if is well-posed and satisfies
\begin{align}
     \exists \gamma > 0, \quad  \forall \bar x_0 \in \R^n,\ k \in \N: & &
     \max\{\norm{\bar x_k}_2,\norm{\bar w_k}_2,\norm{\bar z_k}_2\} \leq \gamma \norm{\bar x_0}_2,
\end{align}
implying that $\t{F}\star G$ is well-posed and satisfies the exponential convergence property:
\begin{align}
     \exists \gamma > 0, \quad  \forall x_0 \in \R^n,\ k \in \N: & &
     \max\{\norm{x_k}_2,\norm{w_k}_2,\norm{z_k}_2\} \leq \gamma \rho^k\norm{x_0}_2.
\end{align}

The following variant of Theorem \ref{thm:convergence} guarantees such property. %

  \begin{corollary}
  \label{cor:exp_stable} Suppose that $I-D_2\Dc$ is invertible, that \r{eq:G_exp} resulting from the $\rho$-weighted interconnection \eqref{eq:algorithm_exp} is Lyapunov stable for all
  $\tilde{F} \in \mathcal{O}_{m, L}^0$, and that
  the regulator equation \eqref{eq:nominal_regulation_control_sys} has a solution.
  Then $F \star (P \star \Kc)$ described by $w_k\in F(z_k)$ and \eqref{eq:PK} is well-posed and
  $\rho$-exponentially convergent for all $F\in\mathcal{O}_{m, L}$.
  \end{corollary}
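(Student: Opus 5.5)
The plan is to reduce the corollary to Theorem \ref{thm:convergence} and the error-system construction of Section \ref{Serr}. The added content is the rate bound, which comes from translating the $\rho$-weighted Lyapunov stability back to the unweighted error signals.

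\textbf{Well-posedness.} Fix $F\in\mathcal{O}_{m,L}$ with unique solution $(\beta^*,w^*=N\hat w^*)$. Form $\tilde F\in\mathcal{O}^0_{m,L}$ via \eqref{tF}. By hypothesis, \eqref{eq:G_exp} is well-posed for every $\tilde F$. Since $\rho$-weighting is an invertible rescaling of signals, the map $\bar z\mapsto\rho^{-k}\tilde F(\rho^k\bar z)$ is a rescaled copy of $\tilde F$. Well-posedness of \eqref{eq:G_exp} at $k=0$ therefore gives global continuity of $(\tilde F^{-1}-\Dcl)^{-1}$, i.e.\ $\tilde F\star G$ is well-posed. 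Together with the invertibility of $I-D_2\Dc$ (so $G=P\star\Kc$ exists), the equivalence noted after \eqref{eq:regnonlinear} shows that $F\star(P\star\Kc)$ is well-posed.

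\textbf{Convergence.} Lyapunov stability of \eqref{eq:G_exp} gives $\|x_k\|\le\gamma\rho^k\|x_0\|$ for trajectories of $\tilde F\star(P\star\Kc)$, and in particular $(x_k,\xi_k)\to0$. This is the Robust Stability condition of Theorem \ref{thm:convergence}. The Regulator Equation condition is assumed. Hence Theorem \ref{thm:convergence} applies: $F\star(P\star\Kc)$ is convergent, with fixed point $x^*=\mathrm{col}(\Pi,\Theta)\,d$, where $d=\mathrm{col}(-\beta^*,\hat w^*)$, and $(z^*,w^*)=(\1_s\otimes\beta^*,N\hat w^*)$.

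\textbf{Rate (main step).} I expect the one genuinely new step to be showing that the shifted signals of the original algorithm are a trajectory of the zero-centered error interconnection $\tilde F\star(P\star\Kc)$ without disturbance. The regulator equations \eqref{eq:nominal_regulation_control_sys} are exactly what make this work. Subtract the steady state, setting $\delta x_k:=x_k-x^*$, $\delta z_k:=z_k-z^*$, $\delta w_k:=w_k-w^*$, and $\delta y,\delta u$ analogously. Then \eqref{eq:nominal_regulation_control_sys_plant} and \eqref{eq:nominal_regulation_control_sys_control} cancel the disturbance columns of $P_e$ in \eqref{eq:regnonlinear_network}. What remains is that $(\delta x,\delta w,\delta z)$ satisfies \eqref{eq:PK} with $\delta w_k\in\tilde F(\delta z_k)$.

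Applying the weighted Lyapunov bound to this trajectory gives
\[
\max\{\|x_k-x^*\|,\|w_k-w^*\|,\|z_k-z^*\|\}\le\gamma\rho^k\|x_0-x^*\|,
\]
which is \eqref{eq:rho_convergent_z}. The constant $\gamma$ is uniform over $F$ because the Lyapunov-stability constant is taken uniform over $\mathcal{O}^0_{m,L}$, as in Definition \ref{defn:rho_convergent}.
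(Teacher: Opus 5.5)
Your proposal is correct and is essentially the argument the paper intends; the paper gives no separate proof, only the sketch preceding the corollary. That sketch says Lyapunov stability of \eqref{eq:G_exp} yields well-posedness and $\rho$-exponential decay of $\tilde F\star G$, and combined with the shift by $(\Pi d,\Theta d)$ from the sufficiency part of the proof of Theorem~\ref{thm:convergence}, trajectories of $F\star(P\star \Kc)$ become disturbance-free trajectories of $\tilde F\star(P\star \Kc)$, exactly as in your rate step. Your rate step already gives the convergence conclusion by itself, so the black-box appeal to Theorem~\ref{thm:convergence} (whose statement carries Assumptions~\ref{assum:well_posed}--\ref{assum:stab_dect}, which the corollary does not list) can be dropped, and your explicit treatment of the uniformity of $\gamma$ addresses a quantifier point the paper leaves implicit.
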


\subsection{Analysis Program}\label{sec:analysis_spec}

In extension of \cite{scherer2025tutorial} for $s=1$, the goal is to construct an LMI-based method to guarantee that \r{eq:G_exp} is Lyapunov stable. We use the matrices
\begin{align}
    \mathbf{m} &:= \text{diag}(m) \otimes I_c, &  \boldsymbol{\sigma} & := \text{diag}(L-m)^{-1} \otimes I_c, &
    \Sigma_{m, L} & := \mat{c:c}{-\boldsymbol{\sigma} & I_{cs} \hdl I_{cs} & \mathbf{m}} \label{eq:diagonal_transform}
\end{align}
and recall that any pair of  input-output sequences of any $\t F\in\c{O}_{m,L}^0$ satisfies the dissipation relations in Lemma~\ref{lem:operator_passive}.
With the corresponding coefficients collected as $\la_i=(\la^i_0,\ldots,\la^i_{\nu_{\max}})$
for $i \in\{1\ldots, s\}$ and $\blam = (\la_1,\ldots,\la_s)$, we can define the filters
\begin{align}\label{fil}
    \Psi^i(\lambda^i)\!=\!\! \mas{cc|c}{0_{1 \times \nu_{\max}-1} & 0 & 1\\
    I_{\nu_{\max}-1} & 0_{\nu_{\max}-1 \times 1} & 0 \hl
    \lambda_{1:\nu_{\max}-1}^i & \lambda_{\nu_{\max}}^i & \lambda_0^i }\!\otimes\! I_c, \ \
    \Psi(\blam)= \text{blkdiag}(\Psi^1(\lambda^1), \ldots, \Psi^s(\lambda^s)),
\end{align}
which are linear systems with initial condition zero.  If $\blam$ satisfies the constraints 
$\sum_{\nu=0}^{\nu_{\max}} \rho^{-\nu} \lambda_\nu > 0,   \quad   \lambda_{\nu}  \leq 0, \ \quad \forall \nu \geq 1, \quad \forall i \in \{1\ldots, s\}$, the conclusion of Lemma~\ref{lem:operator_passive} reads as follows:
$\sum_{k=0}^{T-1} \bar{q}_k^{\top} \bar{r}_k \geq 0$ {holds for any $T>0$, any $\t F\in\c{O}_{m,L}^0$, and any signal} satisfying
$$
\bar r=\Psi(\blam)\bar p,\ \ \mat{c}{\bar p\\\bar w}=\Sigma_{m, L}\mat{c}{\bar q\\\bar z}\te{and}
\bar{w}_k \in \rho^{-k} \tilde{F}(\rho^{k} \bar{z}_k)\te{for}k\in\N.
$$

Following \cite[Section 3.2]{scherer2025tutorial} and if $I-\bm\Dcl$ is invertible,
we construct the filtered interconnection
$G^\Psi(\blam) := \Psi(\blam)(\Sigma_{m, L} \star \bar G)$
as depicted in Figure~\ref{fig:iqc_basic}.  This filtered interconnection  leads to
a description $(\hat{\Acl}, \hat{\Bcl}, \hat{\Ccl}(\blam), \hat{\Dcl}(\blam))$
in which $\hat{\Ccl}(\blam)$ and $\hat{\Dcl}(\blam)$ are affine in $\blam$. We then arrive at the
following extension of \cite[Corollary 18]{scherer2025tutorial}.

\begin{figure}[h]
    \centering
    \begin{subfigure}{0.35\linewidth}
    \centering
    \begin{tikzpicture}[xscale=1,yscale=1,baseline=(ko1)]
\def\dl{2*\dn}
\def\ds{3*\dn}
\node[sy3] (g) at (0,0)  {
$
F
$};
\tio{i2}{g}{east}{1/2};
\tio{o2}{g}{west}{1/2};

\node[sy3,below=2*\dl of g] (k) {
$
\mas{c|c}{\Acl & \Bcl \hl \Ccl & \Dcl } 
$
};

\tio{ki1}{k}{east}{1/2};
\tio{ko1}{k}{west}{1/2};
\draw[<-] (ko1)--   ($(ko1) + (-2*\ds, 0)$)  |- node[pos=.25]{$w$} (o2) ;
\draw[->] (ki1)--  ($(ki1) + (2*\ds, 0)$) |- node[pos=.25,swap]{$z$} (i2);
\end{tikzpicture}
    \caption{Algorithm with $\t{F}$ }
    \label{alg:tF}
    \end{subfigure}
\hfill
\begin{subfigure}{0.6\linewidth}
\begin{tikzpicture}[xscale=1,yscale=1,baseline=(ko1)]
\def\dl{2*\dn}
\def\ds{3*\dn}
\node[sy3] (g) at (0,0)  {
$
F^k
$};
\tio{i2}{g}{east}{1/2};
\tio{o2}{g}{west}{1/2};

\node[sy3] (k) at (0, 0) {
$
\mas{c|c}{\rho^{-1} \Acl & \rho^{-1} \Bcl \hl \Ccl & \Dcl } 
$
};

\node[sy3, above =\dl of k] (LM)  {
$
\mat{cc}{-\bsig & I \\ I & \bm } 
$
};

\tio{iLM1}{LM}{east}{2/3};
\tio{oLM1}{LM}{west}{2/3};
\tio{iLM2}{LM}{east}{1/3};
\tio{oLM2}{LM}{west}{1/3};

\node[sy3,  left= 4*\dl of oLM1] (Psi)  {
$\Psi(\blam)$
};
\tio{iPsi}{Psi}{east}{1/2};
\tio{oPsi}{Psi}{west}{1/2};

\tio{ki1}{k}{east}{1/2};
\tio{ko1}{k}{west}{1/2};
\draw[<-] (ko1)--   ($(ko1) + (-1*\ds, 0)$)  |- node[pos=.25]{$\bar{w}$} (oLM2) ;
\draw[->] (ki1)--  ($(ki1) + (1*\ds, 0)$) |- node[pos=.25,swap]{$\bar{z}$} (iLM2);
\draw[<-]    (iLM1) -- node[pos=.5]{$\bar{q}$} ($(iLM1) + (3*\ds, 0)$);
\draw[->]    (oLM1) -- node[pos=.5, swap]{$\bar{p}$} (iPsi);
\draw[->]    (oPsi) -- node[pos=.5, swap]{$\bar{r}$} ($(oPsi) - (3*\ds, 0)$);
\end{tikzpicture}
\caption{Filtered interconnection $G^\Psi(\blam) := \Psi(\blam)(\Sigma_{m, L} \star \bar G)$}
\label{fig:iqc_basic}
\end{subfigure}
\hfill

    \vspace{-2mm}
    \caption{Framework for computational algorithm analysis}
    \label{fig:filter_replacement_analysis}
    \vspace{-5mm}
\end{figure}
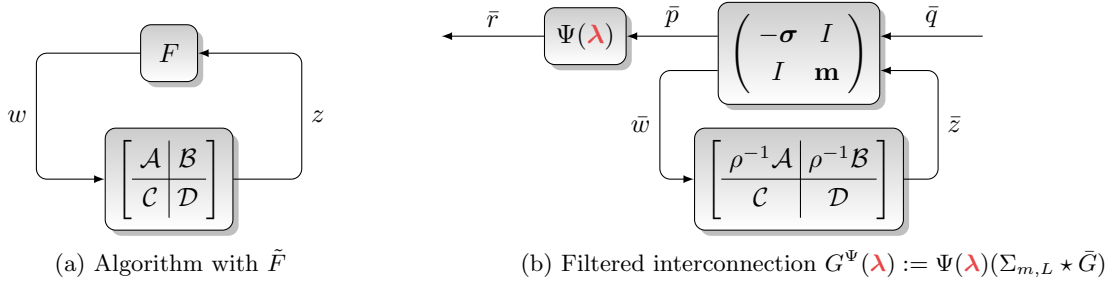

\begin{theorem}
\label{thm:analysis}
    Under Assumptions \ref{assum:well_posed}-\ref{assum:disturbance_detec}, suppose we are given a network $P$, a controller $\Kc$ such that
    $P \star \Kc$ is well posed, a rate $\rho \in (0, 1)$,  and a maximal filter length $\nu_{\max} \in \N$. Then sufficient conditions for the
    interconnection of  $F$ and $(P\star\Kc)$ to be well-posed and $\rho$-exponentially convergent for all $F \in \mathcal{O}_{m, L}$ are if we satisfy all of the following requirements:
    \begin{enumerate}
        \item \textbf{Information Constraint:} $\Dcl$ is block-lower-triangular as in \r{eq:tri_partition}.
        \item \textbf{Regulator Equations:}  The regulator equations in \eqref{eq:nominal_regulation_control_sys} admit a solution.
        \item \textbf{Robust Stability:} With $(\hat{\Acl}, \hat{\Bcl}, \hat{\Ccl}(\blam), \hat{\Dcl}(\blam))$ being a realization of $G^\Psi(\blam) := \Psi(\blam)(\Sigma_{m, L} \star \bar{P} \star \bar{\Kc})$, there exist $\mathcal{M} \succ 0 $ and $\blam$ for which
        \vspace{-3mm}
        \end{enumerate}
        \begin{subequations}
        \label{eq:antipassivity_analysis}
        \begin{align}
     \quad \mat{cc}{\hat{\Acl} & \hat{\Bcl} \\ I & 0}^\top \!\!\!
     \mat{cc}{\mathcal{M} & 0 \\ 0 &-\mathcal{M}}
     \!
     \mat{cc}{\hat{\Acl} & \hat{\Bcl} \\ I & 0}
        \!&+\! \frac{1}{2} \!
        \mat{cc}{\hat{\Ccl}(\blam) & \hat{\Dcl}(\blam) \\ 0 & I }^\top \!\!
        \mat{cc}{0 & I \\ I & 0} \!\!
     \mat{cc}{\hat{\Ccl}(\blam) & \hat{\Dcl}(\blam) \\ 0 & I }
    \!\! \prec \! 0,   \\
     \qquad \qquad \textstyle  \sum_{\nu=0}^{\nu_{\max}} \rho^{-\nu} \lambda^i_\nu > 0,&   \quad   \lambda^i_{\nu}  \leq 0, \ \quad \forall \nu \geq 1, \quad \forall i \in \{1\ldots, s\}. \label{eq:passive_conditions}
    \end{align}
    \end{subequations}
\end{theorem}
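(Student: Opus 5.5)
The plan is to reduce everything to Corollary~\ref{cor:exp_stable}. That corollary needs three things: invertibility of $I-D_2\Dc$, solvability of the regulator equations, and Lyapunov stability of the $\rho$-weighted interconnection \eqref{eq:G_exp} for all $\tilde F\in\mathcal{O}^0_{m,L}$. The first is given, since $P\star\Kc$ is assumed well posed. The second is requirement 2. The work therefore lies in deriving well-posedness and Lyapunov stability of \eqref{eq:G_exp} from requirements 1 and 3.

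\textbf{Well-posedness.} I will derive this from requirement 1 together with the LMI. Because $\Dcl$ is block-lower-triangular, Lemma~\ref{lem:well_posed} applies once the diagonal blocks satisfy \eqref{wpc}. I will extract this from \eqref{eq:antipassivity_analysis}, following \cite{scherer2025tutorial}:
\begin{itemize}
\item Restrict the LMI to the direct-feedthrough block, i.e.\ the lower-right block in the $\bar q$ direction. This gives $\mathrm{Sym}(\hat{\Dcl}(\blam))\prec 0$ after the $-\hat\Bcl^\top\mathcal{M}\hat\Bcl$-type term is dropped; that term is negative semidefinite.
\item Since $\Psi^i$ has feedthrough $\lambda_0^i$, $\hat{\Dcl}(\blam)$ is $\mathrm{diag}(\lambda_0^i)\otimes I_c$ times the feedthrough of $\Sigma_{m,L}\star\bar G$. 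That feedthrough equals $-\bsig+\Dcl(I-\bm\Dcl)^{-1}$.
\item Take the block-diagonal part and use the block-triangular structure; the diagonal blocks of $\Dcl(I-\bm\Dcl)^{-1}$ are then $\Dcl_{ii}(I-m_i\Dcl_{ii})^{-1}$.
\item The multiplier constraint \eqref{eq:passive_conditions} forces $\lambda^i_0\ge\sum_\nu\rho^{-\nu}\lambda^i_\nu>0$, because $\lambda_\nu\le 0$ for $\nu\ge1$. Dividing by $\lambda_0^i>0$ yields \eqref{wpc}.
\end{itemize}
Along the way I must check that $I-\bm\Dcl$ is invertible, so that the realization $G^\Psi$ is well defined. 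This should also follow from the triangular structure combined with the strict inequality.

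\textbf{Stability.} I will run a standard dissipation argument on the $\rho$-weighted signals. Take $\tilde F\in\mathcal{O}^0_{m,L}$ and a trajectory of \eqref{eq:G_exp}, and let $\hat x_k$ be the stacked state of $\bar G$ and the filter $\Psi$, whose initial state is zero. Multiplying the LMI from both sides by $\col(\hat x_k,\bar q_k)$ gives
\[
\hat x_{k+1}^\top\mathcal{M}\hat x_{k+1}-\hat x_k^\top\mathcal{M}\hat x_k+\bar q_k^\top\bar r_k\le-\epsilon(\|\hat x_k\|^2+\|\bar q_k\|^2).
\]
I then sum from $0$ to $T-1$ and invoke Lemma~\ref{lem:operator_passive}, which applies under \eqref{eq:passive_conditions} and gives $\sum\bar q_k^\top\bar r_k\ge0$. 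This leaves
\[
\lambda_{\min}(\mathcal{M})\|\hat x_T\|^2+\epsilon\sum_{k<T}\|\bar q_k\|^2\le\lambda_{\max}(\mathcal{M})\|\bar x_0\|^2.
\]
Hence $\bar x_k$ is uniformly bounded by a multiple of $\|\bar x_0\|$. Bounds on $\bar w_k,\bar z_k$ then follow because, by well-posedness, $\bar w_k$ is a globally Lipschitz function of $\bar x_k$. The Lipschitz constant must be uniform over the class; I expect to obtain this from the Yosida operators of the weighted $\tilde F$, or alternatively from $\bar q,\bar p$ being linear in $(\bar x,\bar w)$ with $\bar q$ already square-summable.

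\textbf{Main obstacle.} The delicate step is the interplay between the time-varying weighted nonlinearity $\rho^{-k}\tilde F(\rho^k\cdot)$ and Lemma~\ref{lem:operator_passive}. I would appeal directly to that lemma as stated for the $\rho$-weighted signals, and also confirm that the weighted maps retain the same sector and Yosida bounds. The remaining care goes into extracting \eqref{wpc} from the LMI's feedthrough block. Once both are done, Corollary~\ref{cor:exp_stable} concludes well-posedness and $\rho$-exponential convergence for all $F\in\mathcal{O}_{m,L}$.
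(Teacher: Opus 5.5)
Your proposal is correct and follows the paper's proof essentially step for step. Both reduce to Corollary~\ref{cor:exp_stable}, read off $\mathrm{Sym}\,\hat{\Dcl}(\blam)\prec 0$ from the lower-right block of \eqref{eq:antipassivity_analysis} with $\hat{\Dcl}(\blam)=\Blam(-\bsig+\Dcl(I-\bm\Dcl)^{-1})$, use block-triangularity and $\lambda_0^i>0$ (which you justify and the paper leaves implicit) to get \eqref{wpc}, invoke Lemma~\ref{Lwp} and Proposition~\ref{cor:well_posed_exp} for well-posedness, and close with the dissipation argument, which you write out where the paper only cites it.

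Two small points to tighten. First, invertibility of $I-\bm\Dcl$ cannot be extracted from the LMI or \eqref{wpc}, since both already contain its inverse; it is the tacit precondition, stated just before the theorem, for $G^\Psi(\blam)$ to have a realization at all. Second, for the bounds on $\bar w,\bar z$ use your second route: $(I-\Dcl\bm)\bar z_k=\Ccl\bar x_k+\Dcl\bar q_k$ and $\bar w_k=\bar q_k+\bm\bar z_k$, with $\bar x_k,\bar q_k$ already bounded by the dissipation inequality, give class-uniform constants with no Lipschitz analysis of the weighted Yosida operators.
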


\begin{proof} 
See Appendix \ref{app:analysis_proof}.
\end{proof}

The \textit{identity filter} $\Psi(\blam)(\bz) = I$ obtained with $\lambda_0^i = 1$ and $\lambda_{\nu}^i = 0$ for
$\nu \in \{1, \ldots, \nu_{\max}\}$ and $i \in \{1, \ldots, s\}$ always satisfies the condition in \eqref{eq:passive_conditions}. {Due to the KYP Lemma, feasibility of the LMI in \eqref{eq:passive_conditions} can be interpreted as certifying strict negative-realness of the corresponding transfer matrix
\cite{scherer2021convex, scherer2025tutorial}.}

\section{Synthesis of Optimization Algorithms}
\label{sec:synthesis}
\label{sec:synthesis_program}
The formal description of the  algorithm synthesis task is as follows:
\begin{prob}[Algorithm Synthesis]
\label{prob:synthesis}
Given a network $P$ from \eqref{eq:network_dynamics}, an operator tuple $\mathcal{O}_{m, L}$, and a set $\linfo$, find a controller $\Kc$ with representation $(\Ac, \Bc, \Cc, \Dc)$ {where} $\Dc \in  \linfo$ and {such that} the interconnection $F \star (P \star \Kc)$ is a well-posed convergent optimization algorithm for any $F \in \mathcal{O}_{m, L}$.
\end{prob}

In this section we propose a full-order internal model, which 
opens the way for (alternating) convex algorithm design methods.

\begin{definition}
    {For a network $P$ and a solution $(\Pi, \Gamma, \Phi)$ of the plant regulator equation \eqref{eq:nominal_regulation_control_sys_plant},
     a \textbf{full-order} internal model $\Sigma_{\textrm{full}}$ and a \textbf{unstructured} subcontroller $\cf{\Sigma_f}$ are}  defined as
     \vspace{-3mm}
    \begin{align}
          \Sigma_{\textrm{full}}: \qquad & \mas{c|c:cc}{
I_{cs}     &0&I & 0 \hl
-\Gamma  &0&0 & I \hdl
\Phi     &I&0&0} & \text{and} &  &   \cf{\Sigma_{\text{f}}} := \mas{c|c}{\cf{A_{\text{f}}} & \cf{B_{\text{f}}} \hl
        \cf{C_{\text{f}}}^1 & \cf{D_{\text{f}}}^1 \hdl \cf{C_{\text{f}}}^2 & \cf{D_{\text{f}}}^2}. \label{eq:full_order_model}
    \end{align}

\end{definition}

\begin{proposition}
\label{prop:full_order_structure}
{For a network $P$ and a solution $(\Pi, \Gamma, \Phi)$ of the plant regulator equation \eqref{eq:nominal_regulation_control_sys_plant}, the full-order internal model $\Sigma_{\textrm{full}}$ and any subcontroller $\cf{\Sigma_{\text{f}}}$ define a} controller $\Kc = \Sigma_{\textrm{full}} \star \cf{\Sigma_{\text{f}}}$ {with a representation such that} the controller regulator equation \eqref{eq:nominal_regulation_control_sys_control} is satisfied with $\Theta = [-I_{cs}\  0]^\top$.

\end{proposition}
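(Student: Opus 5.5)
The plan is to compute a state-space representation of $\Kc=\Sigma_{\textrm{full}}\star\cf{\Sigma_{\text{f}}}$ explicitly and then substitute the candidate $\Theta=[-I_{cs}\ 0]^\top$ into \eqref{eq:nominal_regulation_control_sys_control}. Both sides can then be compared block by block.

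\textbf{Step 1: read off the signals.} I read \eqref{eq:full_order_model} as a generalized plant with state $\eta\in\R^{cs}$ and the following signals:
\begin{itemize}
\item exogenous input $y$ (the measurement from $P$);
\item control inputs $(v_1,v_2)$ supplied by $\cf{\Sigma_{\text{f}}}$;
\item exogenous output $u$ and measured output $\tilde y$.
\end{itemize}
These are related by
\[
\eta_{k+1}=\eta_k+v_{1,k},\qquad u_k=-\Gamma\eta_k+v_{2,k},\qquad \tilde y_k=\Phi\eta_k+y_k .
\]
The subcontroller acts as $\zeta_{k+1}=\Acf\zeta_k+\Bcf\tilde y_k$ and $v^j_k=\Ccf^j\zeta_k+\Dcf^j\tilde y_k$ for $j=1,2$. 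The direct feedthrough from $(v_1,v_2)$ to $\tilde y$ is zero, so the star product is well-posed for every $\cf{\Sigma_{\text{f}}}$.

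\textbf{Step 2: eliminate the internal signals.} Eliminating $(v,\tilde y)$ gives a realization with state $(\eta,\zeta)$:
\[
\mat{cc|c}{I+\Dcf^1\Phi & \Ccf^1 & \Dcf^1\\ \Bcf\Phi & \Acf & \Bcf\hl -\Gamma+\Dcf^2\Phi & \Ccf^2 & \Dcf^2}.
\]
This is the representation $(\Ac,\Bc,\Cc,\Dc)$ to be used in \eqref{eq:nominal_regulation_control_sys_control}.

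\textbf{Step 3: verify the regulator equation.} With $\Theta=\col(-I_{cs},0)$, the interpretation is that in steady state the internal-model state equals $\eta=-d$ and the subcontroller state is $\zeta=0$. Consequently the subcontroller input vanishes, $\tilde y=\Phi(-I)+\Phi=0$.

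\begin{itemize}
\item First block row: $(I+\Dcf^1\Phi)(-I)+\Dcf^1\Phi=-I$, which is the first block of $\Theta$.
\item Second block row: $\Bcf\Phi(-I)+\Bcf\Phi=0$, which is the second block of $\Theta$.
\item Output row: $(-\Gamma+\Dcf^2\Phi)(-I)+\Dcf^2\Phi=\Gamma$.
\end{itemize}
Hence \eqref{eq:nominal_regulation_control_sys_control} holds for every choice of $(\Acf,\Bcf,\Ccf^j,\Dcf^j)$.

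The only delicate point is the correct reading of the block partition of $\Sigma_{\textrm{full}}$ in \eqref{eq:full_order_model}, namely which input column is $y$ and which are the subcontroller outputs. The check must also use the same orientation of the star product as in \eqref{eq:LFT}. Once that bookkeeping is fixed, the verification is a direct cancellation: every $\cf{\Sigma_{\text{f}}}$-dependent term appears multiplied by $\Phi\Theta_\eta+\Phi=0$.
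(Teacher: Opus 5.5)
Your proposal is correct and is essentially the paper's argument. You form the realization of $\Sigma_{\textrm{full}}\star\cf{\Sigma_{\text{f}}}$, which is well-posed since $\Sigma_{\textrm{full}}$ has no feedthrough from the subcontroller outputs to $\tilde y$, and verify \eqref{eq:nominal_regulation_control_sys_control} for $\Theta=\col(-I_{cs},0)$ block row by block row. Your signs $I+\Dcf^1\Phi$, $\Bcf\Phi$, $-\Gamma+\Dcf^2\Phi$ are the ones that actually follow from \eqref{eq:full_order_model}; the paper's displayed realization has the opposite signs on these $\Phi$-terms, under which the cancellations would not close, so yours is the consistent version of that proof.
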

\begin{proof}
{The interconnection $\Kc = \Sigma_{\textrm{full}} \star \cf{\Sigma_f}$ is well-posed and has the realization
\vspace{-1mm}
\begin{align}
    \Kc = \mas{cc|c}{
I_{cs} - \cf{D_f}_1\Phi & \cf{C_f}_1 & \cf{D_f}_1\\ - \cf{B_f}\Phi & \cf{A_f} & \cf{B_f}\hl
-\Gamma  - \cf{D_f}_2\Phi & \cf{C_f}_2 & \cf{D_f}_2}.
\end{align}

\vspace{-2mm}

By inspection, we infer
\begin{align}
    \mat{cc|c}{
I_{cs} - \cf{D_f}_1\Phi & \cf{C_f}_1 & \cf{D_f}_1\\ - \cf{B_f}\Phi & \cf{A_f} & \cf{B_f}\hl
-\Gamma  - \cf{D_f}_2\Phi & \cf{C_f}_2 & \cf{D_f}_2
    }
\mat{c}{-I_{cs}\\0\hl \Phi}  &= \mat{c}{-I_{cs}\\0 \hl  \Gamma}.
\end{align}
}
\end{proof}

\label{sec:full_order_synthesis}
Given a network $P$ and a full-order model $\Sigma_{\textrm{full}}$ from \eqref{eq:full_order_model}, the original plant to be controlled by $\cf{{\Sigma}_{f}}$ is $P \star \Sigma_{\textrm{full}}$.
Figure \ref{fig:iqc_filtering_synth} visualizes the overall system setup for  $\rho$-exponentially weighted synthesis. The goal of full-order synthesis is to choose a {subcontroller} $\cf{{\Sigma}_{f}}$ such that the system formed by interconnection with input $\bar{q}$ and output $\bar{r}$  satisfies the LMI constraints in  \eqref{eq:antipassivity_analysis}.

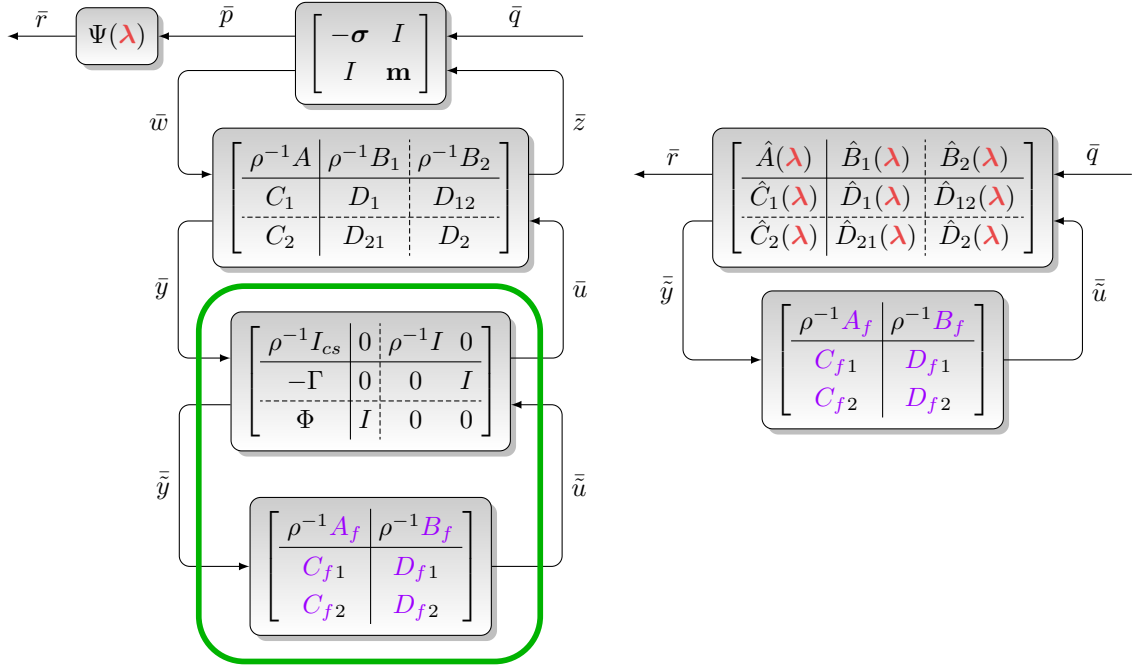
\begin{figure}[h]
\centering
\begin{tikzpicture}[xscale=1,yscale=1,baseline=(ko1)]
\def\dl{2*\dn}
\def\ds{3*\dn}
\node[sy3] (g) at (0,0)  {
$
F^k
$};
\tio{i2}{g}{east}{1/2};
\tio{o2}{g}{west}{1/2};

\node[sy3] (N) at (0, 0) {
$
\setlength{\arraycolsep}{1pt}
\mas{c|c:c}{ \rho^{-1}\!A & \rho^{-1}\! B_1 & \rho^{-1}\! B_2 \hl   
    C_1 & D_1   & D_{12} \hdl
    C_2 & D_{21}  & D_2}
$
};

\node[sy3, below=4*\dl of N] (M) at (0, 0) {
$
\setlength{\arraycolsep}{2pt}
\mas{c|c:cc}{ \rho^{-1}\! I_{cs} & 0  & \rho^{-1}\! I & 0\hl   
    -\Gamma & 0 & 0 &I\hdl
    \Phi & I & 0 & 0}
$
};

\tio{iM1}{M}{east}{2/3};
\tio{oM1}{M}{west}{2/3};
\tio{iM2}{M}{east}{1/3};
\tio{oM2}{M}{west}{1/3};

\node[sy3, above =\dl of N] (LM)  {
$
\mas{cc}{-\bsig & I \\ I & \bm } 
$
};

\tio{iLM1}{LM}{east}{2/3};
\tio{oLM1}{LM}{west}{2/3};
\tio{iLM2}{LM}{east}{1/3};
\tio{oLM2}{LM}{west}{1/3};

\node[sy3,  left= 5*\dl of oLM1] (Psi)  {
$\Psi(\blam)$
};
\tio{iPsi}{Psi}{east}{1/2};
\tio{oPsi}{Psi}{west}{1/2};

\tio{ki1}{N}{east}{2/3};
\tio{ko1}{N}{west}{2/3};
\tio{ki2}{N}{east}{1/3};
\tio{ko2}{N}{west}{1/3};
\draw[<-] (ko1)--   ($(ko1) + (-1*\ds, 0)$)  |- node[pos=.25]{$\bar{w}$} (oLM2) ;
\draw[->] (ki1)--  ($(ki1) + (1*\ds, 0)$) |- node[pos=.25,swap]{$\bar{z}$} (iLM2);

\draw[->] (ko2)--   ($(ko2) + (-1*\ds, 0)$)  |- node[pos=.25, swap]{$\bar{y}$} (oM1) ;
\draw[<-] (ki2)--  ($(ki2) + (1*\ds, 0)$) |- node[pos=.25]{$\bar{u}$} (iM1);

\draw[<-]    (iLM1) -- node[pos=.5]{$\bar{q}$} ($(iLM1) + (3*\ds, 0)$);
\draw[->]    (oLM1) -- node[pos=.5, swap]{$\bar{p}$} (iPsi);
\draw[->]    (oPsi) -- node[pos=.5, swap]{$\bar{r}$} ($(oPsi) - (1*\ds, 0)$);

\node[sy3,below=1*\dl of M] (kc) {
$
\mas{c|cc}{\rho^{-1} \Acf & \rho^{-1} \Bcf \hl
\Ccf_1 & \Dcf_1  \\ \Ccf_2 & \Dcf_2 }
$
};

\tio{kci1}{kc}{east}{1/2};
\tio{kco1}{kc}{west}{1/2};
\draw[->] (oM2)--   ($(oM2) + (-1.5*\ds, 0)$)  |- node[pos=.25, swap]{$\bar{\t y}$} (kco1) ;
\draw[<-] (iM2)--  ($(iM2) + (1.5*\ds, 0)$) |- node[pos=.25]{$\bar{\t u}$} (kci1);

\node[sy3, right=13*\dl of g] (ghat) at (0,0)  {
$
\setlength{\arraycolsep}{1pt}
\mas{c|c:cc}{
\hat{A}(\blam) &  \hat{B}_1(\blam) &  \hat{B}_2(\blam) \hl   
    \hat{C}_1(\blam) & \hat{D}_1(\blam)   & \hat{D}_{12}(\blam) \hdl
    \hat{C}_2(\blam) & \hat{D}_{21}(\blam)  & \hat{D}_2(\blam)}
$};
\tio{i1}{ghat}{east}{2/3};
\tio{o1}{ghat}{west}{2/3};
\tio{i2}{ghat}{east}{1/3};
\tio{o2}{ghat}{west}{1/3};
\draw[->] (o1)--  node[swap]{$\bar{r}$} ($(o1) + (-1.5*\ds, 0)$)  ;
\draw[<-] (i1)-- node[]{$\bar{q}$} ($(i1) + (1.5*\ds, 0)$) ;

\node[sy3,below=\dl of ghat] (khat) {
$
\mas{c|c}{\rho^{-1} \Acf & \rho^{-1}\Bcf \hl
\Ccf_1 & \Dcf_1  \\ \Ccf_2 & \Dcf_2 }
$
};

\tio{ki1}{khat}{east}{1/2};
\tio{ko1}{khat}{west}{1/2};
\draw[<-] (ko1)--   ($(ko1) + (-1.8*\ds, 0)$)  |- node[pos=.25]{$\bar{\t y}$} (o2) ;
\draw[->] (ki1)--  ($(ki1) + (1.8*\ds, 0)$) |- node[pos=.25,swap]{$\bar{\t u}$} (i2);

\node[
    draw=green!60!black,
    line width=.3ex,
    rounded corners,
    dashed,
    fit=(M)(kc),
    inner xsep=0.8em,
    inner ysep=0.5em
] (kbox) {};

\end{tikzpicture}

\caption{Filtering framework for synthesis with full-order models \label{fig:iqc_filtering_synth}}
\vspace{-6mm}
\end{figure}
{To enable a convex design of $\cf{\Sigma_f}$, we introduce the following assumption.
\begin{assum}
    \label{assum:quad_invariance}
    Given a network $P$ and an information constraint set $\linfo$ for $\Dc$, there exists a convex set $\mathcal{L}^D \subseteq \R^{n_u \times n_y}$ with an LMI representation such that
    \begin{align}
        X(I - D_2 X)^{-1} \in \linfo\te{holds for all}X\in \mathcal{L}^D.
    \end{align}
\end{assum}
}

Sufficient conditions for Assumption \ref{assum:quad_invariance} to be respected with $\mathcal{L}^D  := \linfo$ are if $\linfo$ only enforces block-lower-triangularity of $\Dc$ and {$D_2\in\linfo$,} or if  $D_2 = 0$ and $\linfo$ {is a set with an LMI representation.}

The full-order controller synthesis {problem reads as follows.}
{
\begin{prob}[Full-Order Synthesis]
Let $\mathcal{O}_{m, L}$ be a class of operators, $\mathcal{L}^D$ an information structure set derived from $\linfo$, and $P$ a network model such that Assumptions \ref{assum:well_posed}-\ref{assum:quad_invariance} hold.
For a rate $\rho \in (0, 1)$ and with the full-order internal model $\Sigma_{\textrm{full}}$ from \eqref{eq:full_order_model}, find
a controller $\cf{{\Sigma}_{\text{f}}}$  and filter coefficients $\blam$ such that the following hold:
    \begin{enumerate}
        \item The constraint $\cf{D_f}_2 \in \mathcal{L}^D$ is obeyed.
        \item The LMI in \eqref{eq:antipassivity_analysis} is feasible for the system $\hat{G}(\blam) =  \Psi(\blam)(\Sigma_{m, L} \star \bar{P} \star (\bar{\Sigma}_{\textrm{full}} \star \cf{\bar{\Sigma}_{\text{f}}})).$
    \end{enumerate}
    \label{prob:synth_step}
\end{prob}
}

{For fixed filter coefficients $\blam$, the design of $\cf{{\Sigma}_{\text{f}}}$ 
can be convexified,
cf. Appendix \ref{app:synthesis}.
For a fixed controller $\cf{{\Sigma}_{\text{f}}}$, the search over $\blam$ is convex. This permits
the minimization of $\rho$ by the alternating solution of convex programs for ($\blam$, $\cf{\Sigma_f}$) and bisection in $\rho$.

\begin{proposition}
\label{prop:converge_full}
For any solution of Problem \ref{prob:synth_step}, the controller
$\Kc:= \Sigma_{\textrm{full}} \star \cf{{\Sigma}_{\text{f}}}$
leads to a $\rho$-convergent well-posed algorithm $F\star (P\star\Kc)$ 
for
all $F \in \mathcal{O}_{m, L}$.
\end{proposition}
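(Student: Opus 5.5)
The plan is to reduce the statement to Corollary \ref{cor:exp_stable} and Theorem \ref{thm:analysis}. Corollary \ref{cor:exp_stable} needs three ingredients: invertibility of $I-D_2\Dc$, solvability of the regulator equations \eqref{eq:nominal_regulation_control_sys}, and Lyapunov stability of the $\rho$-weighted interconnection \eqref{eq:G_exp} for every $\tilde F\in\mathcal{O}^0_{m,L}$. The last ingredient is exactly what the LMI \eqref{eq:antipassivity_analysis} certifies in the proof of Theorem \ref{thm:analysis}. So I would verify the three hypotheses of Theorem \ref{thm:analysis} (information constraint, regulator equations, robust stability) for $\Kc=\Sigma_{\textrm{full}}\star\cf{\Sigma_f}$, together with well-posedness of $P\star\Kc$.

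\textbf{Regulator equations.} The plant equation \eqref{eq:nominal_regulation_control_sys_plant} holds by the choice of $(\Pi,\Gamma,\Phi)$ used to construct $\Sigma_{\textrm{full}}$. Proposition \ref{prop:full_order_structure} then provides $\Theta=[-I_{cs}\ 0]^\top$ solving \eqref{eq:nominal_regulation_control_sys_control}. Hence condition 2 of Theorem \ref{thm:analysis} holds with no further work.

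\textbf{Well-posedness and information structure.} From the realization in the proof of Proposition \ref{prop:full_order_structure}, the feedthrough of $\Kc$ is $\Dc=\cf{D_f}_2$. The key algebraic step is to show that the closed-loop feedthrough $\Dcl=D_1+D_{12}\Dc(I-D_2\Dc)^{-1}D_{21}$ inherits the information structure. Assumption \ref{assum:quad_invariance} is used here. I would read the convexified synthesis variables of Appendix \ref{app:synthesis} as parametrizing $X=\cf{D_f}_2\in\mathcal{L}^D$, so that $X(I-D_2X)^{-1}\in\linfo$. This requires $I-D_2X$ to be invertible, which is implicit in Assumption \ref{assum:quad_invariance} and is also forced by strictness of the LMI, since the filtered realization must exist. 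It then gives invertibility of $I-D_2\Dc$ and the block-lower-triangular form of $\Dcl$ required by Theorem \ref{thm:analysis}.

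If Problem \ref{prob:synth_step} literally imposes $\cf{D_f}_2\in\mathcal{L}^D$ on $\Dc$ itself, I would instead invoke the sufficient case stated after the assumption: $\linfo$ block-lower-triangular with $D_2\in\linfo$, or $D_2=0$. In that case $\Dc(I-D_2\Dc)^{-1}$ is a product of block-lower-triangular matrices and hence itself block-lower-triangular. I expect this bookkeeping to be the main obstacle: identifying precisely which matrix lies in $\mathcal{L}^D$, and checking that the closed-loop $\Dcl$ satisfies \r{wpc} of Lemma \ref{lem:well_posed}. For the latter I would argue that the strict LMI restricted to its feedthrough block enforces $\mathrm{Sym}(-\bsig+\Dcl(I-\bm\Dcl)^{-1})\prec0$, block-diagonally, through the $\lambda_0$ terms.

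\textbf{Robust stability and conclusion.} Item 2 of Problem \ref{prob:synth_step} is literally the LMI \eqref{eq:antipassivity_analysis} for $\hat G(\blam)=\Psi(\blam)(\Sigma_{m,L}\star\bar P\star\bar\Kc)$, because $\bar\Kc=\bar\Sigma_{\textrm{full}}\star\cf{\bar\Sigma_f}$. The $\rho$-weighting commutes with the star product, as noted after \eqref{eq:algorithm_exp}. Theorem \ref{thm:analysis} therefore applies, and $F\star(P\star\Kc)$ is well-posed and $\rho$-exponentially convergent for all $F\in\mathcal{O}_{m,L}$.
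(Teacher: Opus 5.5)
Your skeleton matches the paper's proof exactly. Proposition~\ref{prop:full_order_structure} supplies $\Theta=[-I_{cs}\ 0]^\top$ for the regulator equations, item 2 of Problem~\ref{prob:synth_step} is literally the LMI required in Theorem~\ref{thm:analysis}, and Theorem~\ref{thm:analysis} (via Corollary~\ref{cor:exp_stable}) then gives well-posedness and $\rho$-convergence. The paper's proof stops at these two steps. It never checks the other hypotheses of Theorem~\ref{thm:analysis}, namely well-posedness of $P\star\Kc$ and block-lower-triangularity of $\Dcl$, so it is useful that you try to. Your re-derivation of \eqref{wpc} from the LMI is redundant, however, since that is already inside the proof of Theorem~\ref{thm:analysis}.

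The bookkeeping in your information-structure paragraph is wrong.

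\begin{itemize}
\item You set $X=\Dcf_2=\Dc$ and conclude $\Dc(I-D_2\Dc)^{-1}\in\linfo$. But $\linfo$ is a constraint set for $\Dc$ itself, so this membership says nothing about $\Dcl=D_1+D_{12}\Dc(I-D_2\Dc)^{-1}D_{21}$.
\item Your fallback covers only the special cases $\mathcal{L}^D=\linfo$. Even there, showing that $\Dc(I-D_2\Dc)^{-1}$ is block-lower-triangular does not by itself control $D_1+D_{12}(\cdot)D_{21}$ for a general network.
\end{itemize}

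Problem~\ref{prob:synth_step} is loosely phrased; Appendix~\ref{app:synthesis} makes the intended reading explicit. There, $\mathcal{L}^D$ constrains the synthesis variable $X=\Dcf_2^0$ of the feedthrough-free plant. The realized controller has $\Dc=\Dcf_2=X\es^{-1}$ with $\es=I+D_2X$; Assumption~\ref{assum:quad_invariance} writes this map with the opposite sign. With this identification:

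\begin{itemize}
\item Assumption~\ref{assum:quad_invariance} gives $\Dc\in\linfo$. Block-lower-triangularity of $\Dcl$ then follows directly from the defining property of $\linfo$, since membership of $\Dc$ in $\linfo$ is chosen precisely to enforce the information constraint on $\Dcl$. No further matrix algebra is needed.
\item Well-posedness of $P\star\Kc$ is automatic, because $I-D_2\Dc=I-D_2X\es^{-1}=\es^{-1}$ is invertible.
\end{itemize}
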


\begin{proof}
Due to Proposition \ref{prop:full_order_structure}, the choice of $\Kc$ guarantees that the regulator equations
\eqref{eq:nominal_regulation_control_sys} admit a solution. Then the claim follows
by applying Theorem~\ref{thm:analysis}.
\end{proof}
}

\section{Numerical Examples}

\label{sec:examples}

Code to generate the following examples is publicly available\footnote{\url{https://github.com/jarmill/composite_opt}}. All code was written in MATLAB (R2023b). For {structured synthesis}, the nonconvex optimization problems were solved using \texttt{hinfstruct} with fixed matrices $\Theta_{22}$ \cite{gahinet2011decentralized}. For full-order internal models, the linear matrix inequalities were solved using LMILab \cite{gahinet1993lmilab} from the MATLAB Robust Control Toolbox. Synthesis and controller recovery in the provided experiments always impose Kronecker structure.

\subsection{Two Operator Splitting over an Unstable Network}

We consider a constrained optimization problem
{(Problem~\ref{prob:composite} for $m=(m_1,0)$, $L=(L_1,\infty)$, $0<m_1<L_1<\infty$),
with $f_2 =  I_{\mathcal{Z}}$ for a closed} convex set $\mathcal{Z}\subset \mathbb{R}^c$. The resolvent {$(I + {\gamma \partial f_2})^{-1}$} is the projection operator onto $\mathcal{Z}$. We assume
the following network with an unstable pole $\bz=-1.1$ and impose the following sparsity pattern on $\Dco_2$:
\begin{align}
    P: \ \ &  \mat{cc:cc}{0 & 0 & \frac{1}{\bz+1.1} & 0 \\
    0 & 0 & \frac{1}{\bz-0.3} & 2 \hdl
    1 & \frac{1}{\bz} & \frac{2}{\bz^2} & \frac{1}{\bz - 0.9} \\
    1 & 1 - \frac{1}{\bz^3} & 0 & 1} \otimes I_c, & \linfo: \  \ \mat{cc}{\bullet & 0  \\ \bullet & \bullet }, \label{eq:unstable_network}
\end{align}
in which $\bullet$ denotes possibly nonzero entries. 
This block-sparsity pattern allows for   proximal evaluations of both $\partial f_1$ and $\partial f_2$.

{With $f_1 \in \mathcal{S}_{1,5}$, we} solve Problem \ref{prob:synth_step} to create a Kronecker-structured optimization algorithm. The synthesized controller stabilizes the unstable channel dynamics and achieves a rate $\rho<0.9476$.
Figure \ref{fig:lasso_delay_ugly} plots an execution of the algorithm
if minimizing a strongly convex quadratic function for $c=7$ with the unconstrained optimum $\beta_Q$ and an $L_1$ norm constraint as in
\begin{align}
    \beta^* &= \argmin_{\beta \in \R^{7}} \frac{1}{2}(\beta - \beta_Q)^\top Q(\beta - \beta_Q)  +  \mathcal{I}_{\norm{\cdot}_1 \leq 110}(\beta). \label{eq:lasso}
\end{align}
The matrix {$Q=Q^\top$ is} positive definite with eigenvalues between 1 and 5. 
The top plots of Figure \ref{fig:lasso_delay_ugly} show $z_k, w_k,$ and $x_k$, respectively, over the course of $k=100$ iterations. In the top-left plot, each curve is a trace of $z_k^i$ {for} $k \in \{1, \ldots, 40\}$ in each coordinate ${i} \in \{1, \ldots, 7\}.$ Similar traces {are shown} for the other plots {of} $w_k$ and $x_k$.
The bottom-left plot shows the elementwise {c}onsensus error $\abs{z^i_k - z_{avg, k}}$ for $i\in \{1, 2\}$ with respect to the average $z_{avg, k}:=\frac{1}{2}(z^1_k+z^2_k)$, {the} bottom-center plot displays the elementwise {o}ptimality condition $\abs{w^1 + w^2}$, {while the} bottom-right plot shows the gap {between} the function value {and} the optimal value $f^*$.
\begin{figure}[!h]
    \centering
    \includegraphics[width=0.9\linewidth]{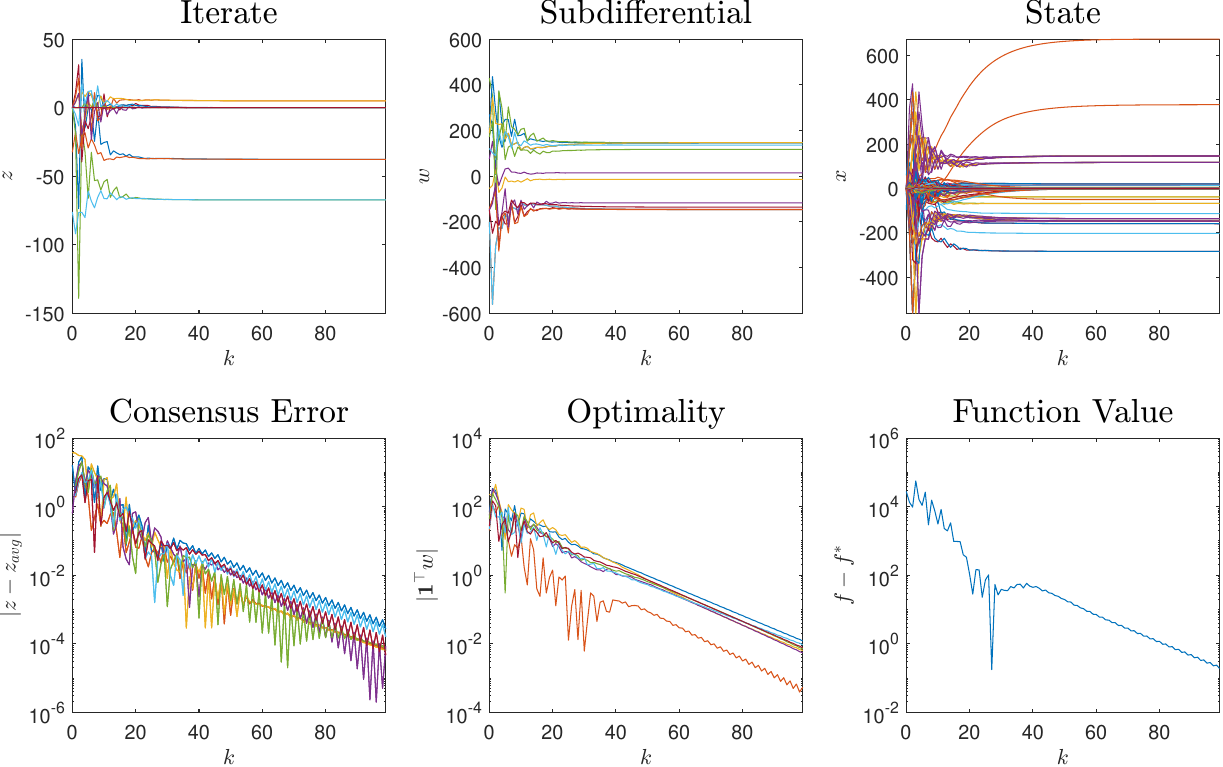}
    \caption{Trajectories of a designed algorithm for Problem \eqref{eq:lasso} with network \eqref{eq:unstable_network}.}
    \label{fig:lasso_delay_ugly}
\end{figure}

\subsection{Case Study: Image Denoising with Transmission Delay}

We provide an example {of} image denoising using total variation regularization \cite{chambolle2004}.
The spatial dimensions of the image are $n_x$ and $n_y$, and the image has $n_c$ color channels ($n_c=3$ for RGB). The values of each pixel are normalized to lie within {$[0,1]$}.
The goal is to recover a clean image $X^\star \in [0, 1]^{n_x \times n_y \times n_c}$ {from a noisy observation $Y = X^\star + \omega$ corrupted by {additive} noise $\omega$.}  
Here, $\mathrm{TV}(X)$  denotes the total variation of the image {defined as}
\vspace{-2mm}
\begin{equation}
    \mathrm{TV}(X) =
\sum_{c=1}^{n_c}
\sum_{i=1}^{n_x}
\sum_{j=1}^{n_y}
\sqrt{
\left(\Delta_x X_{i,j,c} \right)^2
+
\left(\Delta_y X_{i,j,c} \right)^2
}
\end{equation}
involving the differencing operators
\vspace{-2mm}
\begin{align}
(\Delta_x X)_{i,j,c}
\!=\!
\begin{cases}
X_{i+1,j,c}\!-\!X_{i,j,c}, & \!i\!<\!n_x,\\
0, & \!i\!=\!n_x,
\end{cases}
\quad
(\Delta_y X)_{i,j,c}\!=\!
\begin{cases}
X_{i,j+1,c} \!-\! X_{i,j,c}, & \!j\!<\!n_y,\\
0, & \!j\!=\!n_y.
\end{cases}
\end{align}

The total-variation regularizer involves a weight $\alpha_{\text{TV}} \geq 0$. An indicator function $\chi_{[0, 1]}$ is also imposed to ensure that all pixels have normalized intensities between $0$ and $1$.
We formulate the recovery  of the original image $X^\star$  as an optimization problem in the variable $\beta:=\text{vec}(X)$:
\vspace{-2mm}
\begin{equation}\label{eq:denoising_problem}
    \min_{\beta \in \mathbb{R}^{n_x \times  n_y \times n_c}} \, \underbrace{\frac{1}{2}\|\mathrm{vec}(Y) - \beta \|_2^2}_{f_1(\beta)} \, + \, \underbrace{\alpha_{\mathrm{TV}}  \mathrm{TV}(\beta) + \chi_{[0,1]}(\beta)}_{f_2(\beta)}.
\end{equation}

We demonstrate this approach on the denoising of a pepper-garlic test image,  as shown in Figure \ref{fig:denoising_images}. The original image $X^*$ is in Figure \ref{fig:denoising_images:original}. The noise-corrupted image $Y = X^* + \omega$ is in Figure \ref{fig:denoising_images:noisy}, in which each $\omega_{i,j,c}$ is i.i.d. randomly sampled from an normal distribution with mean 0.01 and standard deviation 0.2.  The denoised image $\beta^*$ found by solving \eqref{eq:denoising_problem} with Total Variation regularization parameter $\alpha_{\mathrm{TV}} = 0.15$ is in Figure \ref{fig:denoising_images:tv}.

\begin{figure}[!ht]
    \centering
    \begin{subfigure}{0.3\linewidth}
        \centering
        \includegraphics[width=0.7\textwidth]{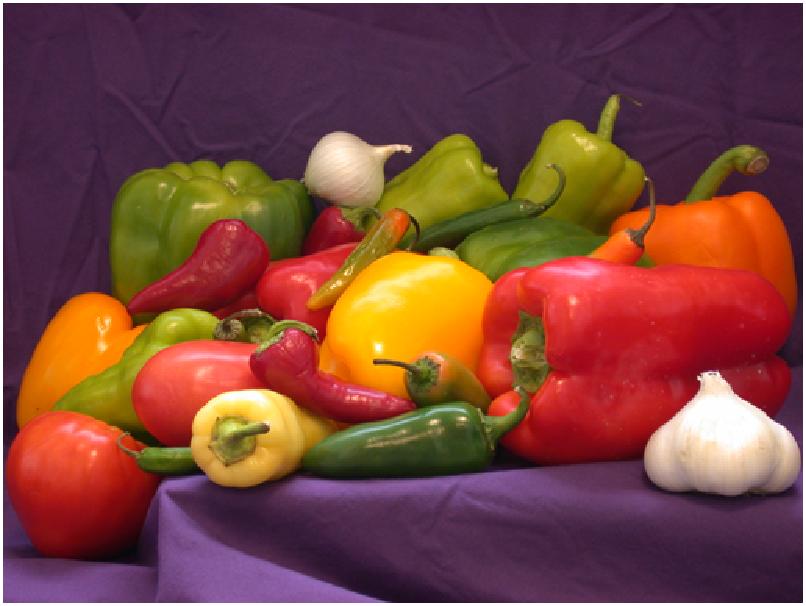}
        \caption{Original Image $X^*$}
        \label{fig:denoising_images:original}
    \end{subfigure}
    \hfill
    \begin{subfigure}{0.3\linewidth}
        \centering
        \includegraphics[width=0.7\textwidth]{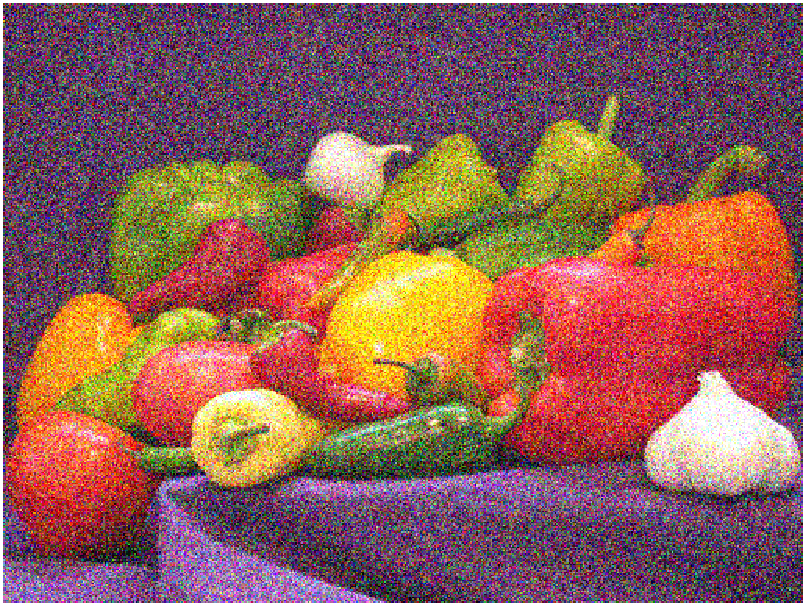}
        \caption{Noisy Image $Y$}
        \label{fig:denoising_images:noisy}
    \end{subfigure}
     \hfill
    \begin{subfigure}{0.3\linewidth}
        \centering
        \includegraphics[width=0.7\textwidth]{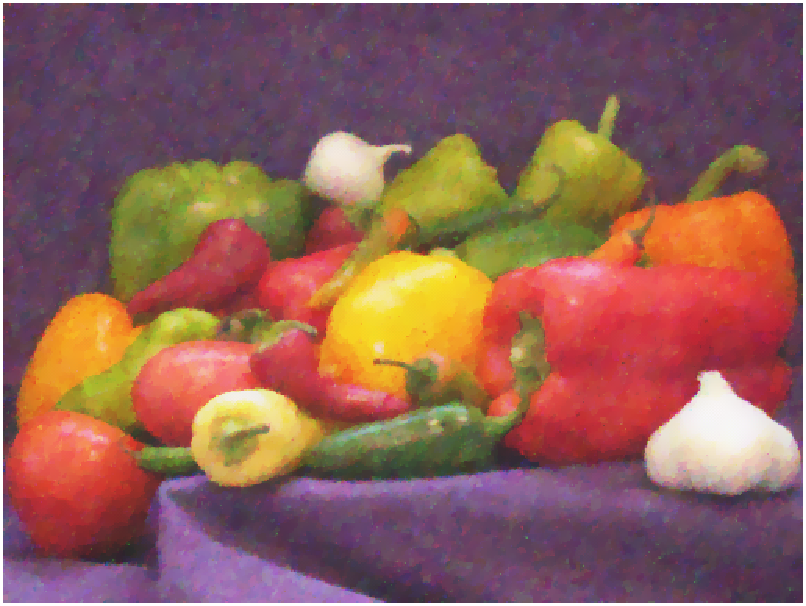}
        \caption{Denoised Image $\beta^*$}
        \label{fig:denoising_images:tv}
    \end{subfigure}
    \vspace{-3mm}
    \caption{Sample pepper-garlic images used for problem \eqref{eq:denoising_problem}.}
    \vspace{-5mm}
    \label{fig:denoising_images}
\end{figure}

We solve the 
denoising problem in \eqref{eq:denoising_problem} using a two-operator splitting algorithm with $f_1 \in \mathcal{S}_{1, 1},$ and $f_2 \in \mathcal{S}_{0,\infty}$. {The function $f_2$ admits}
the proximal operator
    $\mathrm{prox}_{\gamma f_2}(z) = \mathrm{Proj}_{[0,1]}\left[ \mathrm{prox}_{\gamma \mathrm{TV}}(z) \right],$
which follows from adjusting the first-order optimality condition of $\mathrm{prox}_{\gamma \mathrm{TV}}$ from \cite{chambolle2004}. The proximal operator of $\mathrm{TV}$ itself is computed with an inner  iteration given in \cite{chambolle2004}.

We consider the case where the {noisy} image $Y$ is only available remotely, involving a delay of $h\in \N$ steps before and after the evaluation of $\nabla f_1$. These delays are described by the networks 
\vspace{-2mm}
\begin{align}
    P^{h}:\ \ \mat{cc:cc}{0 & 0 & \bz^{-h} & 0 \\
    0 & 0 & 0 & 1 \hdl
    \bz^{-h} & 0 & 0 & 0 \\
    0 & 1 & 0 & 0} \otimes I_c. \label{eq:first_delay_network}
\end{align}
Perfect transmission means to choose $h=0$.
Our reference algorithm is Douglas-Rachford \cite{douglas1956numerical} with parameters $(\gamma, \lambda) = (1, 2)$ and a description of
\begin{align}
    \Kc_{DR} & = \mas{c|cc}{1 & -2 & -2 \hl
    1 & -1 &0 \\
    1 & -2 & -1}. \label{eq:image_denoise_reference_alg}
\end{align}

We first solve (\ref{eq:denoising_problem}) by applying the Douglas-Rachford algorithms
for different values of delays $h \in \{0, \ldots, 5\}$. We then synthesize algorithm for the given delay-$h$ transmission network model using minimal and full-order internal models, and we compare these results to the Douglas-Rachford solution.
The results are summarized in Table~\ref{tab:denoising_rates}.

\begin{table}[h]
    \centering
    \caption{Convergence rates $\rho$ for the denoising problem \eqref{eq:denoising_problem} with different transmission delays $h$ using the Davis-Yin splitting method and our synthesized algorithms (nonconvex structured optimization over parameters of $\Sco$, and convex design using  full-order internal models).}
    \begin{tabular}{cccc}
        \toprule
        \multirow{2}{1.5cm}{Delay $h$} & \multicolumn{3}{c}{Convergence rate $\rho$} \\
         \cmidrule{2-4}
         & Douglas-Rachford   & Syn. Structured synthesis &  Full-order synthesis \\
        \midrule
        $0$ & 0.0134 &  0.01 & 0.064 \\
        $1$ & 2.4149  & 0.5932 & 0.2092 \\
        $2$ &1.4159 &   0.7223 & 0.3483 \\
        $3$ & 1.6184 &   0.7915 & 0.4497 \\
        $4$ & 1.3481 &     0.8365 & 0.5249 \\
        $5$ & 1.4199 &   0.8568 & 0.5827 \\
        \bottomrule
    \end{tabular}
    \label{tab:denoising_rates}
\end{table}

While all methods perform comparably well in the absence of delays, convergence of the {Douglas-Rachford} splitting scheme cannot be certified once delays are introduced. In contrast, our synthesized algorithms (both {of} full and reduced order) retain convergence rates well below one across all tested delay values. We observe an advantage of using full over reduced order algorithms in terms of smaller convergence rates.

Figure \ref{fig:pepper_dr} plots the image iterates $z^2_k$ (input to $\partial f_2$) generated by the Douglas-Rachford scheme as $k$ increases (vertical) and as $h$ increases (horizontal). The denoised image is recovered only at $h=0$ (first column), {while} at all other delays $h$ the Douglas-Rachford algorithm is nonconvergent. Figure \ref{fig:pepper_syn} plots the same iterates $z^2_k$ generated by the synthesized algorithms with full-order internal models. The denoised image is found in the presence of delays.

\begin{figure}[!h]
    \centering
    \includegraphics[width=0.85\linewidth]{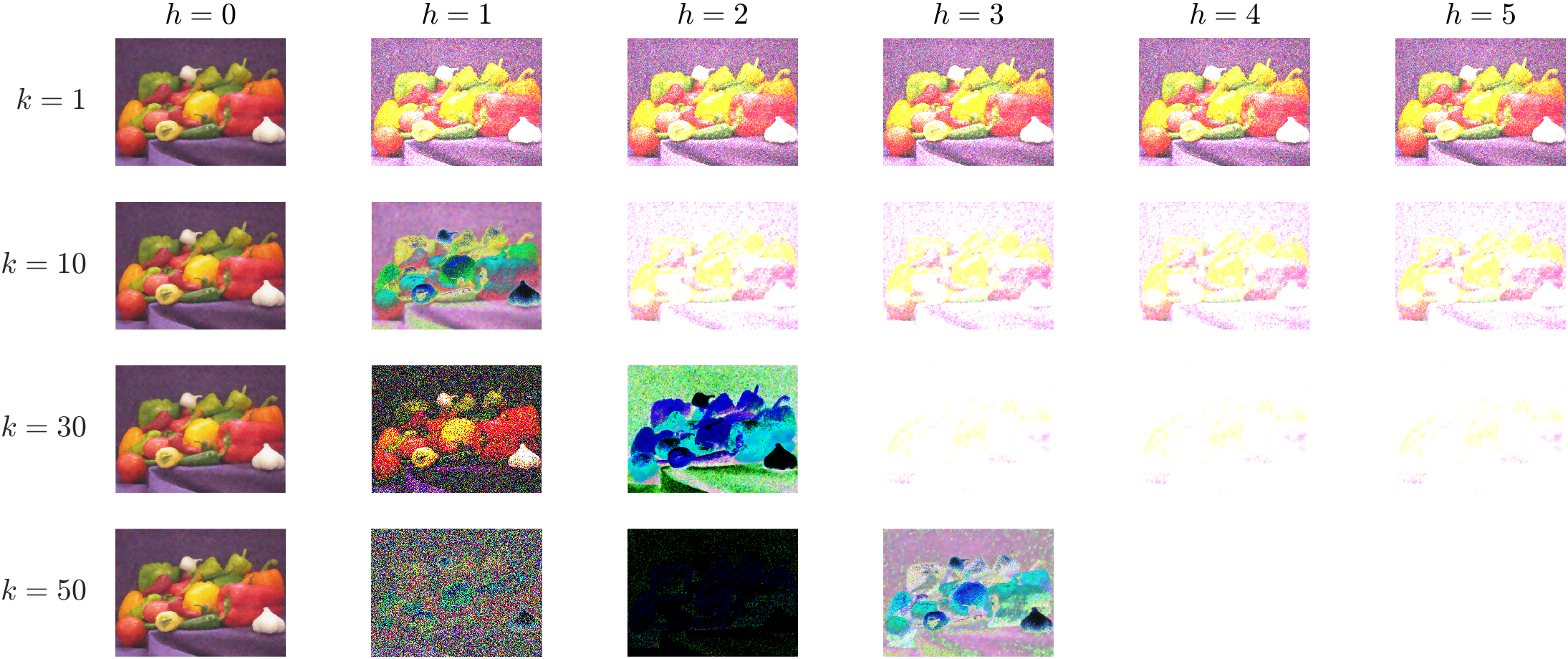}
    \caption{Douglas-Rachford algorithm vs. delay  $h$ and iteration $k$. Nonconvergent when $h>0$.}
    \label{fig:pepper_dr}
    \vspace{-5mm}
\end{figure}

\begin{figure}[!h]
    \centering
    \includegraphics[width=0.85\linewidth]{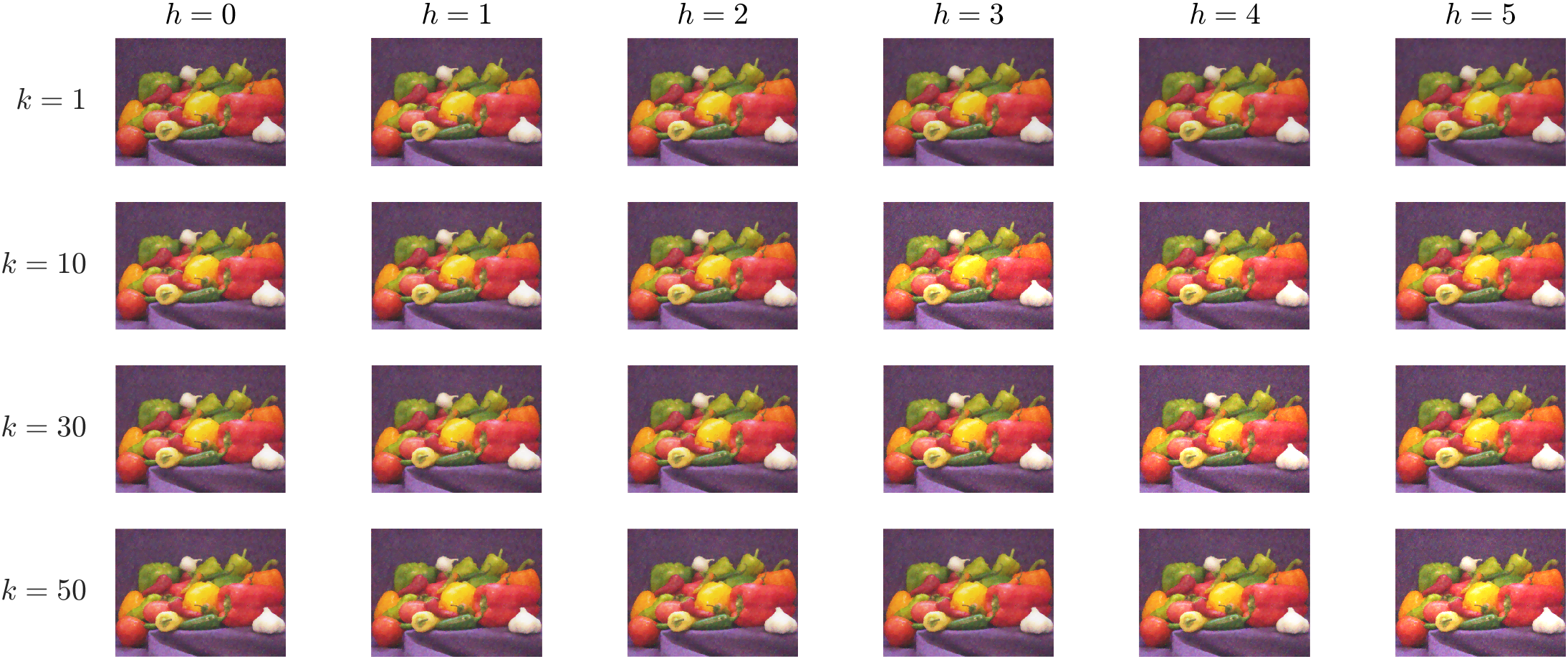}
    \caption{Full-order synthesized algorithm vs. delay  $h$ and iteration $k$. Convergent when $h>0$.}
    \label{fig:pepper_syn}
    \vspace{-5mm}
\end{figure}

\section{Conclusion}
\label{sec:conclusion}

A first-order networked composite optimization algorithms can be interpreted as an interconnection of a static map $F \in \mathcal{O}_{m, L},$ a given network $P$, and a to-be-designed controller $\Kc$.
In this paper, we unveil the modeling power of this interconnection-based paradigm.
We first show that the convergence of the
the corresponding well-posed algorithmic interconnection $F \star (P\star \Kc)$ for all $F \in \mathcal{O}_{m, L}$ requires satisfaction of Robust Stability and Regulator Equation conditions. These necessary conditions for algorithmic convergence induce structural constraints on any controller $\Kc$, {in that it} must satisfy a minimal order bound based on information constraints $\linfo$, and {that it} must factorize into the cascade of a core subcontroller $\Sco$ and an internal model $\Sigma_{\text{min}}$.
Moreover, we address how these structural properties permit
the design of novel composite optimization algorithms with certified exponential convergence rates, even  under the presence of network dynamics. The results are illustrated with several examples, include the tailored design of algorithms for image denoising under delayed gradient information transmission.

Possible extensions include generalizations of algorithm synthesis from strongly convex to merely convex problems with guarantees of sublinear convergence, the use of integral quadratic constraint synthesis to generate algorithms that can solve solve variational inclusion problems, and the reduction of the conservatism of the employed uncertainty characterizations for the class $\mathcal{O}_{m, L}$.
Finally, we aim at developing convex methods for the joint search of stability filter coefficients and subcontrollers.

\appendix

\section{Properties of Functions in $\c{S}_{m,L}$}
\label{app:extended_subdiff}

This appendix collects {several} results of {the class} $\c{S}_{m,L}$ {as defined in} Section \ref{Sconana}.

{

\subsection{Dissipation Relations}

Valid dissipation relations will be built from the following lemma.
\begin{lemma}
\label{lem:dissip_scalar}
    Given a function
$f \in \mathcal{S}_{m, L}$, define $\sigma:=(L-m)^{-1}\geq 0$ 
and the map
\begin{equation}
V(x,g):=f(x)-m\bq(x)-\sigma \bq(g-mx)\te{for}x,g\in\R^c.
\end{equation}

For any constants $\alpha_j> 0$,
    points $\bar{x}_j \in \R^c$, and vectors $g_j \in \alpha_i^{-1} \partial f (\alpha_i \bar{x}_i)$ for $j \in \{1, 2\}$, the following inequalities hold{:}
    \begin{subequations}
    \begin{align}
& V(\alpha_1 \bar{x}_1, \alpha_1 \bar{g}_1)-V(\alpha_2 \bar{x}_2, \alpha_2 \bar{g}_2)\label{eq:supply_1_exp}\\
& \qquad \qquad \qquad \leq  \alpha_1 (\bar{g}_1-m\bar{x}_1)^\top [\alpha_1 (\sigma L\bar{x}_1-\sigma g_1)-\alpha_2 (\sigma L\bar{x}_2-\sigma \bar{g}_2)],
    \nonumber \\
0& \leq [\alpha_1 (\bar{g}_1-m\bar{x}_1)-\alpha_2 (\bar{g}_2-m\bar{x}_2)]^\top [\alpha_1 (\sigma L\bar{x}_1-\sigma \bar{g}_1)-\alpha_2 (\sigma L\bar{x}_2-\sigma \bar{g}_2)].
\label{eq:supply_2_exp}
\end{align}
\end{subequations}
\label{cor:subdiff_extended_ineq_exp}
\end{lemma}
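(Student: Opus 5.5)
The plan is to reduce both inequalities to standard facts about a single convex function by undoing the scaling and shifting out the strong convexity. Reading the hypotheses as $g_j\in\alpha_j^{-1}\partial f(\alpha_j\bar x_j)$ (and the term $\sigma g_1$ in \eqref{eq:supply_1_exp} as $\sigma\bar g_1$), I set $x_j:=\alpha_j\bar x_j$, $\bar g_j$ such that $\alpha_j\bar g_j\in\partial f(x_j)$, and $u_j:=\alpha_j\bar g_j-m x_j$. I also introduce $h:=f-m\bq$. By the definition of $\c{S}_{m,L}$ and of the extended subdifferential, $h$ is \pcc, $(L-m)\bq-h$ is \pcc, and $u_j\in\partial h(x_j)$.

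In these variables $V(x_j,\alpha_j\bar g_j)=h(x_j)-\sigma\bq(u_j)$. Moreover, $\alpha_j(\sigma L\bar x_j-\sigma\bar g_j)=\sigma((L-m)x_j-u_j)=x_j-\sigma u_j$ when $L<\infty$, and when $L=\infty$ we have $\sigma=0$ and the bracket is interpreted as $x_j$ (via $\sigma L\to 1$). The two claims then become
\begin{align*}
h(x_1)-h(x_2)-\sigma\bq(u_1)+\sigma\bq(u_2)&\le u_1^\top\big[(x_1-x_2)-\sigma(u_1-u_2)\big],\\
0&\le (u_1-u_2)^\top\big[(x_1-x_2)-\sigma(u_1-u_2)\big].
\end{align*}
A direct expansion shows $-\sigma\bq(u_1)+\sigma\bq(u_2)-\tfrac{\sigma}{2}\|u_1-u_2\|^2=-\sigma u_1^\top(u_1-u_2)$. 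Hence the first inequality is equivalent to the interpolation inequality
\[
h(x_2)\ge h(x_1)+u_1^\top(x_2-x_1)+\tfrac{\sigma}{2}\|u_1-u_2\|^2 .
\]
The second inequality follows by adding the first inequality to its copy with the indices $1,2$ swapped.

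It remains to prove the interpolation inequality. For $\sigma=0$ it is just the subgradient inequality \eqref{eq:subdifferential} for $h$. For $\sigma>0$ I pass to the conjugate $h^*$. Since $h$ is \pcc, $u_j\in\partial h(x_j)$ is equivalent to $x_j\in\partial h^*(u_j)$, and the Fenchel equality $h^*(u_j)=u_j^\top x_j-h(x_j)$ holds. The key fact is that $(L-m)\bq-h$ being \pcc\ implies that $h^*-\sigma\bq$ is convex, i.e.\ $h^*$ is $\sigma$-strongly convex; this is the standard duality between smoothness and strong convexity, as in \cite{bauschke2020hilbert}. Given this, the strong-convexity subgradient inequality
\[
h^*(u_2)\ge h^*(u_1)+x_1^\top(u_2-u_1)+\tfrac{\sigma}{2}\|u_2-u_1\|^2,
\]
combined with the Fenchel equalities at both points, rearranges exactly into the interpolation inequality.

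I expect the main obstacle to be justifying the duality step in the extended-valued setting, where $h$ may take the value $+\infty$ and we only know that $(L-m)\bq-h$ is \pcc\ rather than that $h$ is differentiable. I would prove it via the identity $h^*=((L-m)\bq-g)^*$-type infimal convolution arguments, namely writing $h=(L-m)\bq-g$ with $g$ \pcc\ and using the Toland/Hiriart-Urruty duality. Failing that, I would appeal directly to the corresponding characterization in \cite{rotaru2024exact,scherer2023robustozf}, which the paper's class definition extends.
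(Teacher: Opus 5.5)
Your argument is correct and follows essentially the paper's route. The paper gives no independent proof: it invokes \cite[Theorem 5]{scherer2023robustozf} and \cite[Lemma 12]{scherer2025tutorial} for $L<\infty$ and calls the $\alpha$-scaling and the case $L=\infty$ a slight extension. Your substitution $x_j=\alpha_j\bar x_j$, $u_j=\alpha_j\bar g_j-mx_j\in\partial(f-m\bq)(x_j)$, followed by the interpolation inequality for $h=f-m\bq$ (the plain subgradient inequality when $\sigma=0$), simply makes that extension explicit.

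Two minor remarks. First, the strong-convexity inequality for $h^*$ that you display is based at $u_1$, so it actually rearranges into the interpolation inequality based at $x_2$; base it at $u_2$ with $x_2\in\partial h^*(u_2)$, or just relabel. Second, the extended-valued obstacle you anticipate never arises. For $L<\infty$, properness of $L\bq-f$ forces $f$, and hence $h$, to be finite everywhere. So $h$ and $(L-m)\bq-h$ are finite convex functions, $h$ is differentiable with $(L-m)$-Lipschitz gradient, and the standard smoothness/strong-convexity duality (or the usual descent-lemma proof of the interpolation inequality) applies without any Toland-type argument.
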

Lemma \ref{lem:dissip_scalar} is a slight  extension of  \cite[Theorem 5]{scherer2023robustozf} as in \cite[Lemma 12]{scherer2025tutorial} to the
the case of  $f\in\c{S}_{m,\infty}$, in which $(x,g)$ are scaled by  $\alpha$. Choosing $\alpha$ to be powers of the exponential  rate $\rho > 0$ allows for the definition of a  family of valid Zames-Falb filters.

\begin{lemma}
\label{lem:operator_passive}
    Let $\tilde{F} \in \mathcal{O}_{m, L}^0$,   $\rho >0$ be an exponential convergence rate. 
    Given any sequence $(\bar{z}, \bar w)$ satisfying  $\bar{w}_k \in \rho^{-k}\tilde{F} (\rho^k \bar{z}_k)$ for all $k \in \N$, we define the new signals 
    \begin{align}
    \label{eq:exp_signals}
        \bar{q}_k^i &:= \bar{w}_k^i - m_i \bar{z}_k^i, &
        \bar{p}_k^i &:= \sigma_i L_i \bar{z}_k^i - \sigma_i \bar{w}_k^i.
    \end{align}

    These signals satisfy the  dissipation relations for all  $T, \ell \in \N,$ with  $T-\ell > 0$,
    \begin{align}
         \textstyle \sum_{k=0}^{T-1} \bar{q}_k^\top \bar{p}_k & \geq 0, &  \textstyle \sum_{k=0}^{T-\ell} \bar{q}_k^\top (\bar{p}_k - \rho^\ell \bar{p}_{k-\ell}) & \geq 0. \label{eq:indiv_dissip}
    \end{align}

    Next, for any finite filter length $\nu_{\max} \in \N$  and coefficients  $(\mu^i_{\nu})_{\nu=0}^{\nu_{\max}}$ satisfying  $\mu^i_\nu \geq 0 $ and $\mu^i_0 > 0$ at all $(i, \nu)$, the signal 
    \begin{align}
        \bar{r}_k^i &:= \textstyle \bar{p}_k^i (\sum_{\nu=0}^{\nu_{\max}} \mu_\nu) - \sum_{\nu=1}^{\nu_{\max}} \mu_\nu \bar{p}_{k-\nu}^i.
    \end{align}
    satisfies {the} dissipation relation
    \begin{align}
        \forall T \in \N, \ T > 0: & &  \textstyle \sum_{k=0}^{T-1} \bar{q}_k^{\top} \bar{r}_k \geq 0. \label{eq:exp_passive}
    \end{align}
\end{lemma}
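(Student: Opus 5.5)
The plan is to work componentwise. For each $i$, $\tilde F$ is the subdifferential of a separable $\tilde f=\sum_i\tilde f_i$ with $\tilde f_i\in\mathcal S^0_{m_i,L_i}$, so $\bar w_k^i\in\rho^{-k}\partial\tilde f_i(\rho^k\bar z_k^i)$. Every inequality in \eqref{eq:indiv_dissip} and \eqref{eq:exp_passive} is a sum over $i$ of per-component inequalities. It therefore suffices to fix one $i$, drop the index, and work with a single $f:=\tilde f_i\in\mathcal S^0_{m,L}$ and $\sigma=(L-m)^{-1}$. We use the convention $\bar p_k=0$ for $k<0$, matching the zero initial condition of the filters $\Psi^i$.

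\textbf{First relation in \eqref{eq:indiv_dissip}.} Apply \eqref{eq:supply_2_exp} of Lemma~\ref{lem:dissip_scalar} with $(\alpha_1,\bar x_1,\bar g_1)=(\rho^k,\bar z_k,\bar w_k)$. For the second point take $\bar x_2=0$, $\bar g_2=0$, which is admissible because $0\in\partial f(0)$ by zero-centering. The right-hand side collapses to $\rho^{2k}\,\bar q_k^\top\bar p_k\ge 0$. Each summand is thus nonnegative, which is stronger than the summed statement. (When $L=\infty$ we have $\sigma=0$ and $\sigma L$ must be read as $1$, consistent with $\Sigma_{m,L}$; I would check this convention against Lemma~\ref{lem:dissip_scalar}.)

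\textbf{Second relation in \eqref{eq:indiv_dissip}.} Use \eqref{eq:supply_1_exp} with the two points $(\rho^k,\bar z_k,\bar w_k)$ and $(\rho^{k-\ell},\bar z_{k-\ell},\bar w_{k-\ell})$. Divide by the positive factor $\rho^{2k}$ so that the right-hand side becomes $\bar q_k^\top(\bar p_k-\rho^{\pm\ell}\bar p_{k-\ell})$; the sign of the exponent has to be matched with the statement's convention. The left-hand side is $\rho^{-2k}\bigl(V(x_k,g_k)-V(x_{k-\ell},g_{k-\ell})\bigr)$ with $x_k=\rho^k\bar z_k$ and $g_k=\rho^k\bar w_k$.

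Two facts are needed here. First, $V(0,0)=0$. Second, $V\ge 0$ along $\operatorname{gph}\partial f$ for zero-centered $f$; this follows from applying \eqref{eq:supply_1_exp} with the roles of the point and $0$ swapped, i.e.\ the cocoercivity bound $f(x)-m\bq(x)\ge\sigma\bq(g-mx)$. Summing over $k$ then gives a weighted telescoping sum $\sum_k\rho^{-2k}V_k-\sum_k\rho^{-2k}V_{k-\ell}$. For the terms with $k<\ell$ the subtracted point is the origin, so they contribute $V(0,0)=0$. Regrouping, the remaining terms pair $V_j$ with weights $\rho^{-2j}-\rho^{-2(j+\ell)}$, and sign-definiteness of these weights together with $V\ge0$ yields the claim.

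\textbf{Main obstacle.} This bookkeeping is where I expect the main difficulty: getting the power of $\rho$ on $\bar p_{k-\ell}$ consistent with the weights multiplying $V$. If the direct telescoping leaves a residual of the wrong sign, my first fallback is to add a nonnegative multiple of the already proven pointwise inequality $\bar q_k^\top\bar p_k\ge 0$ from the first step to absorb it.

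\textbf{Filtered relation \eqref{eq:exp_passive}.} Rewrite the filter output as
\[\bar r_k=\mu_0\bar p_k+\textstyle\sum_{\nu=1}^{\nu_{\max}}\mu_\nu(\bar p_k-\bar p_{k-\nu}).\]
Take $\sum_{k=0}^{T-1}\bar q_k^\top(\cdot)$ and split term by term. The $\mu_0$ term is nonnegative by the first relation since $\mu_0>0$. Each $\mu_\nu$ term, with $\mu_\nu\ge0$, is controlled by the second relation with $\ell=\nu$, truncated at horizon $T$; the truncation is harmless because $\bar p_{k-\nu}=0$ for $k<\nu$. If the proven second relation carries the factor $\rho^{\nu}$ instead of $1$, I would instead expand $\bar r_k$ in the form $\bigl(\mu_0+\sum_\nu\mu_\nu(1-\rho^\nu)\bigr)\bar p_k+\sum_\nu\mu_\nu(\bar p_k-\rho^\nu\bar p_{k-\nu})$ after rescaling the coefficients, which is exactly the parametrisation behind the constraint $\sum_\nu\rho^{-\nu}\lambda_\nu>0$ in \eqref{eq:passive_conditions}. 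In that form both pieces have nonnegative coefficients and the result follows from the two relations in \eqref{eq:indiv_dissip}.
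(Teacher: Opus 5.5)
There is a genuine gap in your proof of the second relation in \eqref{eq:indiv_dissip}, and it is more than bookkeeping. Write $z_k=\rho^k\bar z_k$, $q_k=\rho^k\bar q_k$, $p_k=\rho^k\bar p_k$ and $V_k:=V(z_k,w_k)$.

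Applying \eqref{eq:supply_1_exp} to the points $k$ and $k-\ell$ and dividing by $\rho^{2k}$ gives, with no sign ambiguity, $\bar q_k^\top(\bar p_k-\rho^{-\ell}\bar p_{k-\ell})\ge\rho^{-2k}(V_k-V_{k-\ell})$. After summation, $V_j$ carries the weight $\rho^{-2j}(1-\rho^{-2\ell})$. This is negative for $\rho\in(0,1)$, so $V\ge 0$ works against you.

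This route cannot be repaired, because the $\rho^{-\ell}$-inequality is false. Take the Huber function $h$ ($h(x)=x^2/2$ for $|x|\le1$, $h(x)=|x|-1/2$ otherwise) and $f=h+\tfrac m2 x^2\in\mathcal S^0_{m,m+1}$. The points $z_0=1+a$, $z_1=1$ with $a>0$ give $\bar q_0\bar p_0=a$, $\bar q_1\bar p_1=0$ and $\bar q_1\bar p_0=\rho^{-1}a$. The $\rho^{-1}$-version of the sum is then $a(1-\rho^{-2})<0$.

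Your fallback does not help either. Once the exponent is $+\ell$, the residual you must absorb is $-(1-\rho^{2\ell})\sum_j\rho^{-2j}V_j$. The pointwise bound $\bar q_k^\top\bar p_k\ge0$ gives no lower bound in terms of $V_k$, so it cannot cancel this.

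The missing ingredient is the sharper inequality you get from \eqref{eq:supply_1_exp} with the origin as the second point, namely $V_k\le q_k^\top p_k$. Weight it by $1-\rho^{2\ell}\ge0$ (this is where $\rho\le1$ is used) and add it to the $(k,k-\ell)$ inequality weighted by $\rho^{2\ell}$. This yields
\[
V_k-\rho^{2\ell}V_{k-\ell}\le q_k^\top\bigl(p_k-\rho^{2\ell}p_{k-\ell}\bigr),\quad\text{i.e.}\quad \bar q_k^\top\bigl(\bar p_k-\rho^{\ell}\bar p_{k-\ell}\bigr)\ge\rho^{-2k}V_k-\rho^{-2(k-\ell)}V_{k-\ell}.
\]
With $V_j:=0$ and $\bar p_j:=0$ for $j<0$, this telescopes exactly to $\sum_{k=T-\ell}^{T-1}\rho^{-2k}V_k\ge0$. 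This is the weighted off-by-$\ell$ argument behind the references the paper defers to.

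For \eqref{eq:exp_passive}, the paper's conic combination of \eqref{eq:indiv_dissip} then certifies $\bar r_k=(\sum_\nu\mu_\nu)\bar p_k-\sum_{\nu\ge1}\mu_\nu\rho^{\nu}\bar p_{k-\nu}$. In filter coefficients this is $\lambda_0=\sum_\nu\mu_\nu$, $\lambda_\nu=-\rho^\nu\mu_\nu$, which is exactly what makes $\mu_0>0$, $\mu_\nu\ge0$ equivalent to \eqref{eq:passive_conditions}.

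Your rewriting with leading coefficient $\mu_0+\sum_\nu\mu_\nu(1-\rho^\nu)$ is not an identity. Matching a filter without the $\rho^\nu$ factors forces the coefficient $\mu_0+\sum_\nu\mu_\nu(1-\rho^{-\nu})$, which can be negative. The Huber data above with $\nu_{\max}=1$ and $\mu_0<\mu_1(\rho^{-1}-1)$ do violate that version. The componentwise reduction and your proof of the first relation are fine.
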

\begin{proof}
Relation \eqref{eq:exp_passive} follows by taking conic combinations of relations in \eqref{eq:indiv_dissip} with weights $\mu$. 
The lemma then follows as an extension of \cite[Lemma 5]{scherer2023optimization} at each $i$  to the case where the operator ${\tilde{F} \in \mathcal{O}_{m, L}^0}$ is not globally defined.
The arguments used here are identical to those employed by \cite[Lemma 5]{scherer2023optimization} and \cite[Section 3.2]{scherer2025tutorial}, and are thus omitted for space.

The association to \cite[Lemma 5]{scherer2023optimization} is by 
defining coefficients $\lambda$ from $\mu$ for all $i$ by $\lambda^i_0 = \sum_{\nu=0}^{\nu_{\max}} \mu_\nu$, and by $\lambda^i_\nu = -\mu_\nu$ if $\nu > 1$.
The nonnegativity and positivity conditions on $\mu$ then imply 
\begin{align}
\label{eq:passive_conditions_app}
\textstyle \sum_{\nu=0}^{\nu_{\max}} \rho^{-\nu} \lambda^i_\nu > 0,   \quad   \lambda^i_{\nu}  \leq 0, \ \quad \forall \nu \geq 1, \quad \forall i \in \{1\ldots, s\}.
\end{align}

\end{proof}

\subsection{Generalized Resolvents and Yosida Operators}

We first review properties of operators  $H: \R^c \rightrightarrows \R^c$ \cite{rockafellar2009variational, bauschke2020hilbert}.
The graph of the operator $H$ is $\mathrm{gra}(H) = \{(x, y)\in\R^c\times\R^c \mid y \in H(x)\}$.
$H$ is strongly monotone with constant $m>0$ (or just monotone for $m=0$) if $(y_1 - y_2)^\top (x_1 - x_2) \geq m\|x_1-x_2\|^2$ holds for all $(x_1, y_1), (x_2, y_2) \in \mathrm{gra}(H)$.
An operator $H$ is maximal (strongly) monotone if it is (strongly) monotone and there does  not exist a (strongly) monotone $H'$ such that $\mathrm{gra}(H) \subset \mathrm{gra}(H')$.
If $H$ is maximal strongly monotone, then $H^{-1}$ is
globally continuous \cite[Proposition 12.54]{rockafellar2009variational}. We need the following auxiliary results to provide sufficient well-posedness conditions.

\begin{lemma}\label{Linv}
Let $E,F,G:\R^c\tto\R^c$ be operators, and suppose that $F$ is at-most  single-valued. Then
$$
(EF^{-1}-G)^{-1}=F(E-GF)^{-1}.
$$
\end{lemma}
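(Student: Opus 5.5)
The statement is Lemma~\ref{Linv}, the set-valued identity $(EF^{-1}-G)^{-1}=F(E-GF)^{-1}$. The plan is to compare graphs: show that a pair $(x,y)$ lies in the graph of the left-hand side exactly when it lies in the graph of the right-hand side. Throughout we use the conventions for set-valued maps.
\begin{itemize}
\item Composition: $(AB)(x)=\bigcup_{v\in B(x)}A(v)$.
\item Sum: $(A+B)(x)=\{a+b\mid a\in A(x),\,b\in B(x)\}$, with the difference $A-B$ read as $A+(-B)$.
\item Inverse: $y\in A^{-1}(x)$ iff $x\in A(y)$, as in the paper.
\end{itemize}

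\emph{Unfolding the left-hand side.} We have $y\in (EF^{-1}-G)^{-1}(x)$ iff $x\in EF^{-1}(y)-G(y)$. This holds iff there exist $u\in F^{-1}(y)$, $e\in E(u)$ and $g\in G(y)$ with $x=e-g$. Since $u\in F^{-1}(y)$ means $y\in F(u)$, the condition becomes: there is $u$ with $y\in F(u)$, $e\in E(u)$, $g\in G(y)$ and $x=e-g$.

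\emph{Unfolding the right-hand side.} We have $y\in F(E-GF)^{-1}(x)$ iff there is $u$ with $u\in (E-GF)^{-1}(x)$ and $y\in F(u)$. The first condition means $x\in E(u)-GF(u)$, i.e.\ there are $e\in E(u)$, $v\in F(u)$ and $g\in G(v)$ with $x=e-g$.

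\emph{Matching the two descriptions.} The only discrepancy is that the left side requires $g\in G(y)$ with $y\in F(u)$, whereas the right side requires $g\in G(v)$ for some $v\in F(u)$ that need not equal $y$. Single-valuedness of $F$ closes this gap.
\begin{itemize}
\item From left to right, take $v:=y$.
\item From right to left, both $y$ and $v$ lie in $F(u)$, which is nonempty there. Since $F$ is at most single-valued, $v=y$, so $g\in G(y)$.
\end{itemize}
Hence the two graphs coincide and the identity holds. This matching step is the only non-bookkeeping point, and it is exactly where the hypothesis on $F$ is used. No assumption on $E$ or $G$ is needed.
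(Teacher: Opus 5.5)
Your proof is correct and follows essentially the same route as the paper's: both chase elements through the two graphs, and both use the at-most single-valuedness of $F$ only in the direction from $F(E-GF)^{-1}$ to $(EF^{-1}-G)^{-1}$, where it identifies the intermediate point $v\in F(u)$ with $y$. The left-to-right inclusion needs no hypothesis in either argument.
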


\begin{proof} For any $x\in\R^c$, $y\in (EF^{-1}-G)^{-1}(x)$ implies $x\in (EF^{-1}-G)(y)$, and hence $x\in E(z)-G(y)$ for some $z\in F^{-1}(y)$.
Because $y\in F(z)$, we have $x\in E(z)-GF(z)=(E-GF)(z)$ with $z\in (E-GF)^{-1}(x)$. This finally shows
$y\in F(E-GF)^{-1}(x)$.
Now let $y\in F(E-GF)^{-1}(x)$ for any $x\in\R^c$. Then $y\in F(z)$ for some $z\in (E-GF)^{-1}(x)$, and thus $x\in (E-GF)(z)$. It therefore holds that
$x\in E(z)-G(\t y)$ for some $\t y\in F(z)$.  Since $F$ is at most single-valued, we have $\t y=y$. Using the relations $x\in E(z)-G(y)$ and  $z\in F^{-1}(y)$, we infer $x\in EF^{-1}(y)-G(y)=(EF^{-1}-G)(y)$, which finally shows $y\in (EF^{-1}-G)^{-1}(x)$.
\end{proof}

\begin{corollary}\label{Cinv}
For an operator $F:\R^c\tto\R^c$ and $D\in\R^{c\times c}$, it holds:
\enu{
\item If $F$ is at most single-valued, then $(F^{-1}-D)^{-1}=F(I-DF)^{-1}$.\label{p1}
\item If $F^{-1}$ is at most single-valued, then $(DF-I)^{-1}=F^{-1}(D-F^{-1})^{-1}$.\label{p2}
\item If  $D$ is invertible, then $(F^{-1}-D)^{-1}=-D^{-1}[I-(I-DF)^{-1}]$.\label{p3}
}
\end{corollary}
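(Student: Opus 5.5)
Each of the three identities follows from Lemma \ref{Linv} by choosing $E$, $F$, $G$ suitably, and recognizing that linear maps and the identity are single-valued with the obvious inverses. The plan is to verify the hypotheses of Lemma \ref{Linv} in each case and then perform elementary algebra on linear maps. Throughout, compositions of operators are understood graph-wise as in the proof of Lemma \ref{Linv}. Since the matrices involved are single-valued, identities such as $(EF^{-1}-G)$ with $E=I$ reduce to the expected operator expressions.

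For item \ref{p1}, take $E=I$ and $G=D$ in Lemma \ref{Linv}, with $F$ at most single-valued as assumed. The lemma gives directly $(F^{-1}-D)^{-1}=F(I-DF)^{-1}$.

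For item \ref{p2}, apply Lemma \ref{Linv} with $F^{-1}$ in the role of the at-most single-valued operator. Its inverse is $(F^{-1})^{-1}=F$, so $EF^{-1}$ in the lemma becomes $EF$. Choosing $E=D$ and $G=I$ gives $(DF-I)^{-1}=F^{-1}(D-F^{-1})^{-1}$. The only point to check is that $(F^{-1})^{-1}=F$ holds graph-wise, which is immediate from the definition of the inverse.

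Item \ref{p3} is where some care is needed, because $F$ is no longer assumed single-valued. Here I would exploit the invertibility of $D$ and argue directly by chasing graphs, using the equivalence $y\in(F^{-1}-D)^{-1}(x)\iff x\in F^{-1}(y)-Dy$. Setting $v:=x+Dy$, this says $y\in F(v)$, and hence $x=v-Dy\in v-DF(v)=(I-DF)(v)$. Equivalently, $v\in(I-DF)^{-1}(x)$ with $Dy=v-x$, which gives $y=-D^{-1}(x-v)$. Running the same equivalences backwards from $v\in(I-DF)^{-1}(x)$ defines $y:=D^{-1}(v-x)$, and then $Dy=v-x\in DF(v)$; since $D$ is invertible, $y\in F(v)$, so $x\in F^{-1}(y)-Dy$. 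This shows the two set-valued maps coincide, with $-D^{-1}[I-(I-DF)^{-1}](x)=\{-D^{-1}(x-v)\mid v\in(I-DF)^{-1}(x)\}$.

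The main obstacle is the bookkeeping in item \ref{p3}. In particular, the step $Dy\in DF(v)\Rightarrow y\in F(v)$ uses injectivity of $D$, and the notation $I-(I-DF)^{-1}$ must be read as the set $\{x-v\}$ rather than as an algebraic difference of sets. Everything else is a direct instantiation of Lemma \ref{Linv}.
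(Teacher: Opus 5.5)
Your items 1 and 2 are the paper's own instantiations of Lemma~\ref{Linv}: $E=I$, $G=D$ for item 1, and $F^{-1}$ in the single-valued role with $E=D$, $G=I$ for item 2.

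For item 3 your route is genuinely different, and it is correct in both directions. The paper still applies Lemma~\ref{Linv}, but lets $D$ play the at-most single-valued operator, with $E=F^{-1}$ and $G=I$. This gives $(F^{-1}D^{-1}-I)^{-1}=D(F^{-1}-D)^{-1}$; the paper's display drops the final inverse, which is a typo. The paper then invokes the inverse-resolvent identity to rewrite the left-hand side as $-(I-(I-DF)^{-1})$, and multiplies by $D^{-1}$.

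Your remark that item 3 needs separate care because $F$ is not single-valued therefore misjudges things slightly. The lemma only needs single-valuedness of whichever operator occupies the role of $F$ in it, and $D$ qualifies.

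Your direct graph chase is self-contained. It avoids the paper's unproved inverse-resolvent step, which silently uses $F^{-1}D^{-1}=(DF)^{-1}$ and a sign manipulation. It also shows exactly where invertibility of $D$ enters: to define $y=D^{-1}(v-x)$, and to cancel $D$ in $Dy\in DF(v)$. The paper's version instead buys uniformity, making all three items instances of one lemma, at the cost of an extra standard identity.
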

\begin{proof}
To prove \ref{p1}., apply Lemma \ref{Linv} with $E=I$ and $G=D$.
For \ref{p2}. use Lemma \ref{Linv} for $E=D$, $F$ replaced by $F^{-1}$ and $G=I$.
To show \ref{p3}., choose $E=F^{-1}$, $F=D$ and $G=I$ in  Lemma \ref{Linv} to get
$(F^{-1}D^{-1}-I)^{-1}=D(F^{-1}-D)$. By the inverse-resolvent identity,
the left-hand side indeed equals $-(I-(I-DF)^{-1})$.
\end{proof}

The following
concerns the notion of well-posedness in this paper.

{
\begin{proposition} 
\label{cor:well_posed_exp} 
Given $F:\R^c\tto\R^c$ and $D\in\R^{c\times c}$, let $(F^{-1}-D)^{-1}$ be globally continuous. Then:
\enu{
\item If $\al>0$ and $\bar{F}(x) := \alpha^{-1} F(\alpha x)$ for $x\in\R^c$ then
$(\bar F^{-1}-D)^{-1}(x)=\alpha^{-1} (F^{-1}\! -\! \Dcl)^{-1}(\alpha x)$ for $x\!\in\!\R^c$. Hence
$(\bar F^{-1}\!-\!D)^{-1}$ is globally continuous.
\item If $a,b\!\in\!\R^c$ and $\bar{F}(x)\!:=\!F(x\!+\!a)\!-\!b$ for $x\!\in\!\R^c$, then $(\bar{F}^{-1}\!\!-\!D)^{-1}(x)=(F^{-1}\!\!-\!D)^{-1}(x+a-\!Db)\!-b$ for $x\!\in\!\R^c$. \!Hence $(\bar{F}^{-1}\!-\!D)^{-1}$ is globally continuous.

}
\end{proposition}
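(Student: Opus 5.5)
The plan is to work directly with graphs of set-valued maps. For an operator $H$, write $y\in H(x)$ iff $(x,y)\in\mathrm{gra}(H)$. Both parts then reduce to showing that the graph of $(\bar F^{-1}-D)^{-1}$ is a linear (affine) image of the graph of $(F^{-1}-D)^{-1}$. Continuity and global single-valuedness then transfer through composition with continuous affine maps.

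For part 1, I fix $x\in\R^c$. I will unwind $y\in(\bar F^{-1}-D)^{-1}(x)$ as $x+Dy\in\bar F^{-1}(y)$, that is, $y\in\bar F(x+Dy)=\alpha^{-1}F(\alpha x+\alpha Dy)$. Setting $\tilde y:=\alpha y$, this reads $\tilde y\in F(\alpha x+D\tilde y)$. That is equivalent to $\alpha x+D\tilde y\in F^{-1}(\tilde y)$, i.e. $\tilde y\in(F^{-1}-D)^{-1}(\alpha x)$. Each step is an equivalence because $\alpha>0$ makes $y\mapsto\alpha y$ a bijection. This gives the set identity $(\bar F^{-1}-D)^{-1}(x)=\alpha^{-1}(F^{-1}-D)^{-1}(\alpha x)$. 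Note that the statement writes $\Dcl$ in place of $D$, which I read as the same matrix. Since $(F^{-1}-D)^{-1}$ is globally single-valued and continuous, the right-hand side is a singleton for every $x$. As a function of $x$ it is the composition $x\mapsto\alpha x\mapsto(F^{-1}-D)^{-1}(\alpha x)\mapsto\alpha^{-1}(\cdot)$ of continuous maps, hence globally continuous.

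For part 2, I argue in the same way. The inclusion $y\in(\bar F^{-1}-D)^{-1}(x)$ means $y\in\bar F(x+Dy)=F(x+Dy+a)-b$, i.e. $y+b\in F(x+a+Dy)$. With $\tilde y:=y+b$, we have $Dy=D\tilde y-Db$, so the condition becomes $\tilde y\in F\big((x+a-Db)+D\tilde y\big)$. Equivalently, $\tilde y\in(F^{-1}-D)^{-1}(x+a-Db)$. Translating back gives $y=\tilde y-b$, which yields the stated formula as an identity of sets. Global continuity follows as before, by composition with the translations $x\mapsto x+a-Db$ and $\tilde y\mapsto\tilde y-b$.

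I do not expect a real obstacle here. The only point needing care is that each manipulation is a genuine equivalence of set-valued inclusions, not just a one-directional implication. This holds because every substitution is an invertible affine change of variables. It also matters that the definition $F^{-1}(x)=\{y\mid x\in F(y)\}$ is applied in both directions. The bookkeeping of the shift $-Db$, which arises from rewriting $Dy$ in terms of $\tilde y$, is the one place where a sign error could occur, so I will check it explicitly.
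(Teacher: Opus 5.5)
Your proof is correct and takes essentially the same route as the paper. Both unwind $y\in(\bar F^{-1}-D)^{-1}(x)$ through the definition of the inverse and use the invertible substitution $\tilde y=\alpha y$ (respectively $\tilde y=y+b$, which produces the $-Db$ shift exactly as you describe). Your single chain of equivalences also gives both inclusions at once, where the paper proves one direction and appeals to reversing the argument; and you are right that the $\mathcal{D}$ in part 1 should read $D$.
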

\begin{proof} To show 1., take any $x\in\R^c$ and let $y=\alpha^{-1} (F^{-1} - D)^{-1}(\alpha x)$.
This is equivalent to $\alpha y = (F^{-1} - D)^{-1}(\alpha x)$ and hence to $\alpha x\in F^{-1}(\alpha y) - D(\alpha y)$, which is nothing but $x\in \alpha^{-1} F^{-1}(\alpha y) - D y$; since $\alpha^{-1} F^{-1}(\alpha \ \bullet)=\bar{F}^{-1}$,
this holds iff $x\in\bar F^{-1}(y)-D y$, which is finally equivalent to $y\in (\bar{F}^{-1}-D)^{-1}(x)$.

To show 2., pick any $x\in\R^c$ and let $y\in (F^{-1}-D)^{-1}(x+a-Db)-b$; then $y+b\in (F^{-1}-D)^{-1}(x+a-Db)$ and hence
$x+a-Db\in F^{-1}(y+b)-D(y+b)$; we infer that  there exists some $z\in F^{-1}(y+b)$ with $x=z-a-Dy$;
hence $y+b\in F(z)$ and $x=z-a-Dy$; then $\bar z:=z-a$ satisfies $y\in F(\bar z+a)-b=\bar F(\bar z)$ and $x=\bar z-Dy$,
implying $\bar z\in \bar{F}^{-1}(y)$ and $x=\bar z-Dy$, which finally gives
$x\in (\bar{F}^{-1}-D)(y)$ and thus $y\in (\bar{F}^{-1}-D)^{-1}(x)$. Reversing the arguments proves
that $(F^{-1}-D)^{-1}(x+a-Db)$ is not only contained in, but actually equal to $(\bar{F}^{-1}-D)^{-1}(x)$.
\end{proof}
}

\begin{proposition}
\label{prop:invertible}

If a matrix {$D \in \R^{c \times c}$} ensures that $H = (\partial f^{-1} - D)^{-1}$ is globally continuous for all $f \in \mathcal{S}_{m, \infty}$, then $D$ is invertible.
\end{proposition}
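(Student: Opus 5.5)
We argue by contraposition: if $D$ is singular, we exhibit a function $f\in\mathcal{S}_{m,\infty}$ for which $H=(\partial f^{-1}-D)^{-1}$ is not globally single-valued. The natural test functions are indicator-type functions of subspaces, shifted by a quadratic so that they lie in $\mathcal{S}_{m,\infty}$.

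\textbf{Construction.} Take a nonzero $v\in\mathrm{null}(D)$ and the one-dimensional subspace $\mathcal{V}=\mathrm{span}(v)$. Set
$$f:=m\bq+I_{\mathcal{V}^\perp}.$$
Then $f-m\bq=I_{\mathcal{V}^\perp}$ is \pcc, and $\infty\cdot\bq-f$ imposes no restriction in the limit case, so $f\in\mathcal{S}_{m,\infty}$. (If the convention for $L=\infty$ requires care, we only need that indicators of closed convex sets plus $m\bq$ belong to the class, as noted in Section \ref{Sconana}.) Its subdifferential is $\partial f(x)=mx+\mathcal{V}$ for $x\in\mathcal{V}^\perp$ and empty otherwise. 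Hence
$$\partial f^{-1}(y)=\{x\in\mathcal{V}^\perp : y-mx\in\mathcal{V}\}.$$

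\textbf{Showing $H$ is not single-valued.} We examine $H(0)$. We need $y$ with $0\in \partial f^{-1}(y)-Dy$, i.e.\ $Dy\in\partial f^{-1}(y)$, so that $Dy\in\mathcal{V}^\perp$ and $y-mDy\in\mathcal{V}$. The choice $y=0$ works, and so does $y=tv$ for any $t\in\R$: indeed $Dy=0\in\mathcal{V}^\perp$ and $y-0=tv\in\mathcal{V}$. Thus $H(0)\supseteq\{tv: t\in\R\}$, which is not a singleton. This contradicts global single-valuedness, so $D$ must be invertible.

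\textbf{Main obstacle.} The only delicate point is confirming that this $f$ lies in $\mathcal{S}_{m,\infty}$ under the paper's conventions, including the case $m<0$, where the subdifferential is defined as $\partial(f-m\bq)+mI$. With that definition, $\partial f=\partial I_{\mathcal{V}^\perp}+mI$ holds by definition, so the computation above is unchanged for any $m$. If $f$ must additionally be zero-centered, this is already satisfied, since $f(0)=0$ and $0\in\partial f(0)$. The argument therefore goes through for every $m$.
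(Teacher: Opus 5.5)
Your proposal is correct and uses the same argument as the paper. In both, you exhibit $f\in\mathcal{S}_{m,\infty}$ whose subdifferential at $0$ contains both $0$ and a nonzero vector of $\mathrm{null}(D)$, so that $H(0)$ is not a singleton. The only difference is the witness function: the paper uses the finite-valued $f=\|\cdot\|_1+m\bq$, a single function that works for every singular $D$ because $\partial f(0)=[-1,1]^c$ contains a null vector scaled to $\|w\|_1=1$. You instead tailor $f=m\bq+I_{\mathcal{V}^\perp}$ to $D$, which makes $H(0)$ contain the whole line $\mathcal{V}$.
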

\begin{proof}
We prove this proposition by contradiction. {For any $m\in\R$,} consider 
$f(\beta) = \norm{\beta}_1 + \frac{m}{2} \norm{\beta}^2_2$ with $f \in \mathcal{S}_{m, \infty}$.  The subdifferential of $f$ at $0$ is $\partial f(0) = \partial \norm{0}_1 + m 0 = \partial \norm{0}_1  =  [-1, 1]^c$. If {$D$} is rank-deficient, there exists a vector  $w \in \R^c$ {with} $\norm{w}_{1} = 1$ such that $D w = 0$. We have $0, w \in \partial f(0)$, which implies $0 \in \partial f^{-1}(0)$ and $0 \in \partial f^{-1}(w)$ {and hence}  $0 \in \partial f^{-1}(0) - {D0} = \partial f^{-1}(0)$ and $0 \in \partial f^{-1}(w) - {Dw} = \partial f^{-1}(w)$. {This shows} $0 \in (\partial f^{-1} - D)^{-1}(0)$ and $w  \in (\partial f^{-1} - D)^{-1}(0)$, which contradicts global single-valuedness  of $H$.
{Hence $H$ is not globally continuous.}
\end{proof}

\subsection{Guaranteeing Well-Posedness}

{The goal is to} characterize those matrices $D\in\R^{sc\times sc}$ for which $({F^{-1}}-D)^{-1}$ is globally continuous for all $F\in\mathcal{O}_{m, L}$. 
To simplify discussion, we assume that $D$ is block-lower-triangular.

\begin{lemma}
\label{lem:well_posed_indiv}
    Given a function $f_i \in \mathcal{S}_{m_i, L_i}$  with $\sigma_i = \frac{1}{L_i - m_i}$ and a symmetric matrix $D_{ii} \in \R^{c \times c}$, the inequality $\text{Sym}(-\sigma_i + D_{ii}(I- m_i D_{ii})^{-1}) \prec 0$ implies that $(I - D_{ii} \partial f_i)^{-1}$ and $(\partial f_i^{-1} - D_{ii})^{-1}$ are both globally continuous. 
    \end{lemma}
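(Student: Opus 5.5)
The plan is to reduce both operators to the inverse of a single maximal strongly monotone operator, by shifting out the strong-convexity part of $f$ and dualizing.

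First set $g:=f-m\bq$. Since $f\in\c{S}_{m,L}$, $g$ is \pcc\ and $(L-m)\bq-g$ is \pcc. By the definition of the extended subdifferential, $\partial f=\partial g+mI$.

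By conjugate duality, $\partial g^{-1}=\partial g^*$. The smoothness bound $(L-m)\bq-g$ \pcc\ translates into $g^*-\si\bq$ being \pcc, with $\si=(L-m)^{-1}$; for $L=\infty$ this just says $g^*$ is \pcc\ and $\si=0$. Hence $\partial g^{-1}=\si I+\c{M}_0$, where $\c{M}_0:=\partial(g^*-\si\bq)$ is maximal monotone. This is the only place where the class $\c{S}_{m,L}$ enters.

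Next I unfold the two relations. Take $x\in\R^c$ and look for $(z,w)$ with $w\in\partial f(z)$ and $z-Dw=x$; this pair describes both $w\in(\partial f^{-1}-D)^{-1}(x)$ and $z\in(I-D\partial f)^{-1}(x)$. Introduce $v:=w-mz\in\partial g(z)$. Then $z-D(v+mz)=x$, i.e. $(I-mD)z=x+Dv$. The hypothesis presupposes that $I-mD$ is invertible, since the expression $D(I-mD)^{-1}$ appears in it. So
\[
z=(I-mD)^{-1}(x+Dv).
\]
Using $D(I-mD)^{-1}=(I-mD)^{-1}D=:M$ and $z\in\partial g^{-1}(v)$, the system becomes
\[
(I-mD)^{-1}x\in T(v),\qquad T:=\partial g^{-1}-M=(\si I-M)+\c{M}_0 .
\]
Conversely, any solution $v$ of this inclusion yields a solution pair $(z,w)$ through the formulas above, so the problem is equivalent to inverting $T$.

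The key step is to show that $T$ is maximal strongly monotone. The hypothesis $\mathrm{Sym}(-\si I+M)\prec 0$ means that the linear map $\si I-M$ satisfies $u^\top(\si I-M)u\ge\varepsilon\|u\|^2$ for some $\varepsilon>0$. A continuous, single-valued, strongly monotone linear map added to the maximal monotone $\c{M}_0$ gives a maximal strongly monotone operator (the standard sum rule for a maximal monotone operator plus a continuous everywhere-defined monotone map, e.g. \cite{bauschke2020hilbert,rockafellar2009variational}). The cited \cite[Proposition 12.54]{rockafellar2009variational} then gives that $T^{-1}$ is globally single-valued and $\varepsilon^{-1}$-Lipschitz.

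Therefore $v=T^{-1}((I-mD)^{-1}x)$ depends continuously and uniquely on $x$, and hence so do
\[
z=(I-mD)^{-1}(x+Dv),\qquad w=v+mz .
\]
This proves global continuity of both $(\partial f^{-1}-D)^{-1}:x\mapsto w$ and $(I-D\partial f)^{-1}:x\mapsto z$. Symmetry of $D$ is not actually needed beyond the stated inequality.

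The step I expect to need the most care is the duality claim that $g^*-\si\bq$ is \pcc, i.e. that $\partial g^{-1}-\si I$ is maximal monotone. This must hold uniformly, including the nonsmooth case $L=\infty$ (where it is trivial) and negative $m$ (which is harmless, since $m$ has already been absorbed into $g$). I would establish it via the standard equivalence between $L$-smoothness of a convex function and $1/L$-strong convexity of its conjugate, applied to $g$ with constant $L-m$.
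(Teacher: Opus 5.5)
Your proof is correct, but it takes a different route from the paper's.

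\textbf{The paper's route.} The paper splits into two cases.
\begin{itemize}
\item For $L<\infty$, it cites Appendix A of the tutorial for global continuity of $(I-D\partial f)^{-1}$. It then passes to $(\partial f^{-1}-D)^{-1}=\nabla f\circ(I-D\nabla f)^{-1}$ via Corollary~\ref{Cinv}(1).
\item For $L=\infty$, it uses $\si=0$ to deduce that $D$ is invertible. It rewrites $x\in z-D\partial f(z)$ on the primal side as $-D^{-1}x\in-(D^{-1}-mI)z+\partial g(z)$. It then invokes maximal strong monotonicity of the right-hand operator. This implicitly uses that $\mathrm{Sym}(X)\prec 0$ iff $\mathrm{Sym}(X^{-1})\prec 0$ for $X=D(I-mD)^{-1}$, whose inverse is $D^{-1}-mI$. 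Finally it recovers the Yosida operator from Corollary~\ref{Cinv}(3).
\end{itemize}

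\textbf{Your route and what it buys.} You instead invert on the dual side. Writing $\partial g^{-1}=\si I+\c{M}_0$ moves the smoothness constant into the operator being inverted. The single inclusion $(I-mD)^{-1}x\in(\si I-M)v+\c{M}_0(v)$ then has these advantages:
\begin{itemize}
\item It treats $L<\infty$ and $L=\infty$ uniformly.
\item It never needs $D^{-1}$. That inverse need not exist when $\si>0$ (e.g.\ $D=0$), which is exactly why the paper's primal argument cannot be reused for finite $L$.
\item It yields both operators at once from the same pair $(z,w)$.
\item It avoids the external citation and gives an explicit Lipschitz bound $\varepsilon^{-1}$.
\end{itemize}

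\textbf{The extra ingredient.} The only additional step is the conjugate-duality claim you flagged, and it is standard. Properness of $(L-m)\bq-g$ forces $g$ to be finite on all of $\R^c$. Hence the equivalence between $(L-m)$-Lipschitz $\nabla g$ and $(L-m)^{-1}$-strong convexity of $g^*$ applies directly (see \cite{bauschke2020hilbert}).
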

\begin{proof}

Appendix A of \cite{scherer2025tutorial}
proves that $(I - D_{ii} \partial f_i)^{-1}$ is globally continuous if the inequality holds when $L_i < \infty$.  This furthermore implies that $(\partial f_i^{-1} - D_{ii})^{-1}$ is globally continuous by Corollary \ref{Cinv} part 1.

We therefore focus on the case $L_i = \infty$, in which $\sigma_i = 0$. Strictness of the inequality $\text{Sym}(D_{ii}(I- m_i D_{ii})^{-1}) \prec 0$ requires that  $D_{ii}$ is an invertible matrix. It therefore holds that $(\partial f_i^{-1} - D_{ii})^{-1} = D_{ii}^{-1}(I - (I - D_{ii} \partial f_i)^{-1})$ by Corollary \ref{Cinv} part 3.
If $w_i \in (I - D_{ii} \partial f_i)^{-1}(z_i)$, then $z_i  \in w_i  - D_{ii} \partial f_i(w_i) = (I - D_{ii} m_i) w_i  - D_{ii} \partial (f_i - m_i I)(w_i)$ and $-D_{ii}^{-1} z_i \in -D_{ii}^{-1}(I - D_{ii} m_i) w_i  +  \partial (f_i - m_i I)(w_i)$. Because $f_i \in \mathcal{S}_{m_i, \infty}$, the function $f_i - m_i I$  is c.c.p. and $\partial (f_i - m_i I)$ is maximal monotone. The constraint $\text{Sym}(D_{ii}(I- m_i D_{ii})^{-1}) \prec 0$ ensures that $(I - D_{ii} f_i)$ is maximal  strongly monotone, which further implies that $(I - D_{ii} f_i)^{-1}$ is globally continuous \cite[Proposition 12.54]{rockafellar2009variational}.
\end{proof}

 \section{Regulation Theory}\label{app:regulation}

Consider {the following linear system driven by a constant disturbance $d_0\in\R^{n_d}$:}
\begin{align}
\label{eq:reg_system_exo}
    \t G: & &\mat{c}{x_{k+1} \hl e_k} &= \mat{c|c}{\t\Acl & \t\Bcl \hl \t\Ccl &  \t\Dcl } \mat{c}{x_k \hl d_k},\ \ d_{k+1}=d_k.
\end{align}

\begin{definition}
\label{defn:output_regulation}
    The system in \eqref{eq:reg_system_exo} attains \textbf{output regulation} if $\t\Acl$ is Schur, and for all $x_0 \in \R^n, d_0 \in \R^{n_d}$, it holds that $\lim_{k \rightarrow \infty} e_k= 0$.
\end{definition}

A {core} condition for output regulation is the feasibility of the  Regulator Equation.
\begin{lemma}[Lemma 1 \cite{francis1976internal}]
\label{lem:nominal_regulation}

The system $\t G$ with representation $(\t\Acl, \t\Bcl, \t\Ccl, \t\Dcl)$ achieves output regulation iff the following two conditions hold:
\begin{enumerate}
    \item Stability: $\lim_{k\rightarrow 0} x_k = 0$ for all $x_0 \in \R^n$ if $d_0=0$.
    \item {Solvability of} Regulator Equation: There exists {a matrix $\Upsilon$ satisfying}
        \begin{align}
         \mat{cc}{\t\Acl - I& \t\Bcl \\ \t\Ccl & \t\Dcl} \mat{c}{\Upsilon \\ I} & = 0.%
         \label{eq:regulator_closed}
    \end{align}
\end{enumerate}
\end{lemma}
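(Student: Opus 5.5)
We treat the disturbance as the state of an autonomous exosystem and decompose the trajectory into a steady-state part and a transient part. Because $d_{k+1}=d_k$, every trajectory of \eqref{eq:reg_system_exo} is determined by the pair $(x_0,d_0)$, and linearity lets us handle each direction separately.

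\emph{Sufficiency.} Let $\Upsilon$ solve \eqref{eq:regulator_closed}. Define the shifted state $\epsilon_k := x_k - \Upsilon d_0$. The first block row of \eqref{eq:regulator_closed} gives $\t\Acl\Upsilon + \t\Bcl = \Upsilon$. Therefore
\[
\epsilon_{k+1} = \t\Acl x_k + \t\Bcl d_0 - \Upsilon d_0 = \t\Acl \epsilon_k .
\]
The second block row gives $\t\Ccl\Upsilon + \t\Dcl = 0$, so $e_k = \t\Ccl \epsilon_k$. The stability condition (the case $d_0=0$ holding for all $x_0$) is equivalent to $\t\Acl$ being Schur. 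Hence $\epsilon_k\to0$, and so $e_k\to 0$ for all $x_0,d_0$. This is output regulation in the sense of Definition \ref{defn:output_regulation}.

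\emph{Necessity.} Output regulation requires $\t\Acl$ to be Schur, which gives condition 1 directly. Since $1$ is then not an eigenvalue of $\t\Acl$, the matrix $I-\t\Acl$ is invertible. We set
\[
\Upsilon := (I-\t\Acl)^{-1}\t\Bcl ,
\]
which satisfies the first block row $(\t\Acl - I)\Upsilon + \t\Bcl = 0$. It remains to show the output block $\t\Ccl\Upsilon + \t\Dcl = 0$. With $\epsilon_k$ defined as above, the same computation gives $\epsilon_k = \t\Acl^k(x_0-\Upsilon d_0)\to 0$. Consequently
\[
e_k = \t\Ccl\epsilon_k + (\t\Ccl\Upsilon+\t\Dcl)d_0 \to (\t\Ccl\Upsilon+\t\Dcl)d_0 .
\]
Regulation forces this limit to vanish for every $d_0\in\R^{n_d}$, hence $\t\Ccl\Upsilon+\t\Dcl=0$.

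The argument is elementary, with no real obstacle. The one point needing care is that condition 1 is stated only for $d_0=0$ and over all $x_0$, and we must identify it with Schurness of $\t\Acl$. This identification holds because the autonomous linear recursion $x_{k+1}=\t\Acl x_k$ converges to zero for every initial state if and only if $\rho(\t\Acl)<1$, as recalled in Section \ref{Ssys}. The invertibility of $I-\t\Acl$, which the necessity direction relies on, then follows immediately.
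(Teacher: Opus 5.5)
Your proof is correct and is the standard argument. The paper gives no proof of its own (it defers to \cite{francis1976internal}), but your shifted state $\epsilon_k = x_k - \Upsilon d_0$ is the same decoupling device the paper uses right afterwards in Appendix~\ref{app:regulation} (via $\hat{x}_k = x_k - \Pi d$, $\hat{\xi}_k = \xi_k - \Theta d$), your necessity step $\Upsilon = (I-\t\Acl)^{-1}\t\Bcl$ matches the paper's later use of the unique solution $\Upsilon$ under Schur stability, and you rightly read the stability condition as $\lim_{k\to\infty} x_k = 0$ (the $k\rightarrow 0$ in the statement is a typo).
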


\subsection{Open-Loop Regulation}

Based on \cite{francis1976internal, francis1977linear, saberi2012control}, let us review nominal regulation theory for
the interconnection of a plant  $\tilde{P}$ and a controller $\Kc$ described as
\begin{align}
     \mat{c}{
    x_{k+1}  \hl e_k \\ y_k
    } &= \mat{c|cc}{ \tlA & \tlB_1 & \tlB_2 \hl
    \tlC_1 & \tlD_1 & \tlD_{12} \\
    \tlC_2 & \tlD_{21} & \tlD_2 }\mat{c}{
        x_k \hl d \\ u_k
    }, & & \mat{c}{
        \xi_{k+1}  \hl u_k}=\mat{c|c}{\Ac & \Bc \hl
    \Cc & \Dc}\mat{c}{
        \xi_k \hl y_k}
    \label{eq:formed_by_interconnection}
\end{align}
and affected by the constant disturbance $d$. {It is assumed that $\Kc$ internally stabilizes $\t{P}$ 
and that $\t P\star\Kc$ is represented by $(\t\Acl,\t\Bcl,\t\Ccl,\t\Dcl$) (Section~\ref{Ssys}).

Now suppose that $\Kc$ achieves output regulation for $\t P\star\Kc$. Then \eqref{eq:regulator_closed} admits a unique solution $\Upsilon$ and we can partition
$\Upsilon=  (\Pi^\top, \ \Theta^\top)^\top$ according to the partition of $\t\Acl$ (see \r{cll}).
Then the  matrices
}
\begin{align*}
    \Gamma := (I - \Dc \tilde{D}_{2})^{-1}(\Cc \Theta + \Dc (\tilde{C}_2 \Pi + \tilde{D}_{21}))\te{and}
    \Phi := \tilde{C}_2 \Pi + \tilde{D}_{21} + \tilde{D}_{2}\Gamma
\end{align*}
generate a solution $(\Pi, \Gamma, \Phi, \Theta)$ of the \textit{open-loop regulator equations}

\begin{align}
\label{eq:regulator_equation}
\mat{c|ccc}{
\tlA  &\tlB_1   &\tlB_2\hl
\tlC_1&\tlD_{11}&\tlD_{12}\\
\tlC_2&\tlD_{21}&\tlD_{2}}
\mat{c}{\Pi\hl I\\\Gamma}&=\mat{c}{\Pi\hl 0\\\Phi}, &
\mat{cc}{\Ac&\Bc\\ \Cc&\Dc}
\mat{c}{\Theta\\\Phi} &= \mat{c}{\Theta \\ \Gamma}.
\end{align}

Conversely, for any quadruple $(\Pi, \Gamma, \Phi, \Theta)$, the signal transformations
\begin{align}
    \hat{x}_k &= x_k - \Pi d, & \hat{\xi}_k &= \xi_k - \Theta d, & \hat{y}_k &= y_k - \Phi d, & \hat{u}_k &= u_k - \Gamma d \label{eq:coord_transform_s}
\end{align}
in the interconnection \eqref{eq:formed_by_interconnection} lead to
\begin{align*}\arraycolsep.4ex
     \mat{c}{
    \hat{x}_{k+1}  \hl e_k \\ \hat{y}_k
    } &= \mat{c|c:c}{ \tlA & \tlA\Pi - \Pi  + \tlB_2 \Gamma + \tlB_1 & \tlB_2 \hl
    \tlC_1 & \tlC_1 \Pi + \tlD_{12} \Gamma + \tlD_1 & \tlD_{12} \\
    \tlC_2 & \tlC_2 \Pi + \tlD_2 - \Phi + \tlD_{21} & \tlD_2 }\mat{c}{
        \hat{x}_k \hl d \\ u_k
    },\\ 
    \mat{c}{
        \hat{\xi}_{k+1}  \hl \hat{u}_k}& =\mat{c|c : c}{\Ac & \Ac \Theta - \Theta + \Bc \Phi &  \Bc \hl
    \Cc & \Cc \Theta + \Dc \Phi - \Gamma & \Dc}\mat{c}{
        \hat{\xi}_k \hl d \\ \hat{y}_k    }. \label{eq:formed_by_interconnection_coord}
\end{align*}
If $(\Pi, \Gamma, \Phi, \Theta)$ is taken to satisfy the regulator equations \eqref{eq:regulator_equation}, we clearly obtain
\begin{equation}
     \mat{c}{
    \hat{x}_{k+1}  \hl e_k \\ \hat{y}_k
    } = \mat{c|c:c}{ \tlA & 0 & \tlB_2 \hl
    \tlC_1 & 0 & \tlD_{12} \\
    \tlC_2 & 0 & \tlD_2 }\mat{c}{
        x_k \hl d \\ u_k
    }, \ \  \mat{c}{
        \hat{\xi}_{k+1}  \hl \hat{u}_k}=\mat{c|c : c}{\Ac & 0 &  \Bc \hl
   \Cc & 0& \Dc}\mat{c}{
        \hat{\xi}_k \hl d \\ \hat{y}_k    }.\label{eq:formed_by_interconnection_coord_decouple}
\end{equation}
As a consequence of the regulator equations, the signals $(\hat{x}, \hat{\xi}, \hat{y}, \hat{u}, e)$ in \r{eq:formed_by_interconnection_coord_decouple} are decoupled from the exogenous input $d$, which implies that
$\Kc$ achieves output regulation for $\t P\star\Kc$. Indeed, since $\Acl$ is Schur, all trajectories
of \eqref{eq:formed_by_interconnection_coord_decouple} satisfy $\lim_{k \rightarrow \infty} (\hat{x}_k, \hat{\xi}_k) = 0$.
Invertibility of $I - \tilde{D}_2 \Dc$ ensures that the internal signal $(\hat{u}, \hat{y})$ also satisfies  $\lim_{k \rightarrow \infty} (\hat{u}_k, \hat{y}_k) = 0$. This implies $\lim_{k \rightarrow \infty} e_k = 0$ for any constant disturbance $d$. In view of \eqref{eq:coord_transform_s}, we can conclude for all trajectories of \eqref{eq:formed_by_interconnection} that
\begin{align}
    \lim_{k \rightarrow \infty} (x_k, \xi_k, y_k, u_k) =
     (\Pi d, \Theta d,  \Phi d,  \Gamma d).\label{eq:asymptotic_regulation_closed}
\end{align}

\section{Proof of Theorem \ref{thm:convergence}}
\label{app:convergence_proof}

{
To show the necessity of Conditions 1 and 2, we assume that
$F\star(P\star \Kc)$ is a well-posed convergent algorithm for all  $F \in \mathcal{O}_{m, L}$.
Hence $P\star \Kc$ is well-posed and, therefore, $I-D_2\Dc$ is invertible.
Moreover, for any $\t F \in \mathcal{O}_{m, L}^0$, the algorithm
$\t F\star(P\star \Kc)$ is well-posed and convergent since $\t{F}\in \mathcal{O}_{m, L}$.
Because $\t{F}$ satisfies $0\in \t{F}(0)$, this algorithm has the fixed point $(0,0,0)$. We then prove that closed-loop convergence in $x$ implies convergence in $(w, z)$.
{
\begin{proposition} If there exists a unique vector $x^*$ such that all trajectories $x$ of the well-posed  $F \star G$   algorithm satisfy  with
$\lim_{k\to\infty}x_k=x^*$, then $F \star G$ admits a unique fixed point
$(x^*, z^*, w^*)$. Furthermore,  and all its trajectories satisfy $\lim_{k\to\infty} (x_k,w_k,z_k)=(x^*, w^*, z^*)$. \label{prop:well_posed_converge}
\end{proposition}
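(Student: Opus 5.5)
The plan is to use the explicit recursion \eqref{eq:algorithm_causal}, which well-posedness guarantees, together with continuity of the generalized Yosida operator $H=(F^{-1}-\Dcl)^{-1}$. Every trajectory satisfies $x_{k+1}=\Acl x_k+\Bcl H(\Ccl x_k)$, $w_k=H(\Ccl x_k)$, and $z_k=\Ccl x_k+\Dcl H(\Ccl x_k)$. Both $w_k$ and $z_k$ are therefore continuous functions of $x_k$ alone.

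\textbf{Step 1 (convergence of $w$ and $z$).} Since $x_k\to x^*$ and $H$ is globally continuous, we get $w_k=H(\Ccl x_k)\to H(\Ccl x^*)=:w^*$. Likewise $z_k\to \Ccl x^*+\Dcl w^*=:z^*$.

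\textbf{Step 2 (the limit is a fixed point).} Pass to the limit in $x_{k+1}=\Acl x_k+\Bcl w_k$; both sides converge, which gives $x^*=\Acl x^*+\Bcl w^*$. The relation $z^*=\Ccl x^*+\Dcl w^*$ holds by definition of $z^*$. It remains to check $w^*\in F(z^*)$. By definition $w^*\in (F^{-1}-\Dcl)^{-1}(\Ccl x^*)$, i.e.\ $\Ccl x^*\in F^{-1}(w^*)-\Dcl w^*$. Hence $\Ccl x^*+\Dcl w^*=z^*\in F^{-1}(w^*)$, which is exactly $w^*\in F(z^*)$. So $(x^*,z^*,w^*)$ satisfies \eqref{eq:fixed_point}. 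This step never needs closedness of the graph of $F$, because the limit is identified through $H$ rather than through a limit of inclusions.

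\textbf{Step 3 (uniqueness).} Let $(\hat x,\hat z,\hat w)$ be any fixed point. The inclusion $\hat w\in F(\hat z)$ with $\hat z=\Ccl\hat x+\Dcl\hat w$ gives $\Ccl\hat x\in F^{-1}(\hat w)-\Dcl\hat w$, so $\hat w\in H(\Ccl\hat x)$. Global single-valuedness then forces $\hat w=H(\Ccl \hat x)$. Consequently the constant sequence $x_k\equiv\hat x$ is the trajectory of \eqref{eq:algorithm_causal} starting at $x_0=\hat x$. By hypothesis this trajectory converges to $x^*$, so $\hat x=x^*$. Then $\hat w=H(\Ccl x^*)=w^*$ and $\hat z=z^*$, which proves uniqueness.

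The only delicate point is Step 2's verification that $w^*\in F(z^*)$, for a possibly set-valued $F$ whose graph closedness we do not want to invoke. This is handled by working entirely with the single-valued, continuous $H$ and the equivalence $w\in H(\Ccl x)\iff w\in F(\Ccl x+\Dcl w)$ stated at the start of Section~\ref{sec:execution}. Everything else is routine limit passing.
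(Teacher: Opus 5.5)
Your proposal is correct and follows essentially the same route as the paper's proof. Both use continuity of $H$ to pass to the limit and identify $w^*=H(\Ccl x^*)$, unwind $\Ccl x^*\in F^{-1}(w^*)-\Dcl w^*$ to obtain $w^*\in F(z^*)$, and prove uniqueness by noting that any other fixed point generates a constant trajectory of the explicit recursion, which must then equal $x^*$.
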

}
\begin{proof}
{Since $H = (F^{-1} - \Dcl)^{-1}$ in \eqref{eq:algorithm_causal} is continuous, by taking the limit $k\to\infty$ we infer
$x^*=\Acl x^*+\Bcl H(\Ccl x^*)$ and $\lim_{k\to \infty}w_k = H(\Ccl x^*)=:w^*$ as well as
$\lim_{k\to\infty}z_k=\Ccl x^*+\Dcl H (\Ccl x^*)=:z^*$.
Hence $w^*=\Acl x^*+\Bcl w^*$ and $\Ccl x^*\in H^{-1}(w^*)=F^{-1}(w^*)-\c{D}w^*$, i.e., $z^*=\Ccl x^*+\Dcl w^*\in F^{-1}(w_*)$ and thus $w^*\in F(z^*)$. This shows that $(x^*,w^*,z^*)$ is a fixed point of \eqref{eq:algorithm} and $\lim_{k\to\infty} (x_k,w_k,z_k)=(x^*, z^*, w^*)$.

Any other fixed point $(\bar x^*,\bar w^*,\bar z^*)$ of \eqref{eq:algorithm} satisfies
$\bar x^*=\Acl \bar x^*+\Bcl \bar w^*$, $\bar z^*=\Ccl \bar x^*+\Dcl \bar w^*$ and $\bar w^*\in F(\bar z^*)$; hence
$\Ccl \bar x^*\in F^{-1}(\bar w^*)-\Dcl \bar w^*$ and thus $\bar w^*=H(\Ccl \bar x^*)$, which gives $\bar x^*=\Acl\bar x^*+\Bcl H (\Ccl \bar x^*)$ as well as $\bar z^*=\Ccl\bar x^*+H(\Ccl \bar x^*)$.
If we initialize a trajectory of \eqref{eq:algorithm_causal} as $x_0=\bar x^*$, it hence
satisfies $x_k=\bar x^*$ for all $k\in\N$; by assumption, we have $\lim_{k\to }x_k=x_*$ which implies 
$\bar x^*=x^*$ and hence also $(\bar x^*,\bar w^*,\bar z^*)=(x^*,w^*,z^*)$.
}
\end{proof}

All  trajectories therefore converge to $(0,0,0)$, which  shows that Condition 1 (robust stability) holds true.
}

\subsection{Proof of Necessity of Condition 2}
\label{sec:reg_necessity}
{Let us use denote the matrices describing  $G=P\star \Kc$ by $(\Acl,\Bcl,\Ccl,\Dcl)$
and  represent the algorithm $F\star(P\star\Kc)$ by \eqref{eq:algorithm}. Moreover, we now choose test quadratics $F^t(z)=\bm z+b$, which satisfy $F^t\in\mathcal{O}_{m, L}$ (by Assumption 1);
hence $F^t\star G$ is well-posed and convergent.

First suppose $b=0$ such that $F^t(z)=\bm z$ and $F^t\in\c{O}_{m,L}^0$. Since $F^t\star G$ is well-posed, we conclude that $I-\Dcl\bm$ is invertible
and that $((F^t)^{-1}-\Dcl)^{-1}$ is defined by the matrix $H:=\bm(I-\Dcl\bm)^{-1}$.

Indeed, suppose there exists some $z\neq 0$ with $(I-\Dcl\bm)z=0$; this implies $0=z-\Dcl y$ for $y=\bm z\neq 0$; hence
$z\in (F^t)^{-1}(y)$ and thus $0\in ((F^t)^{-1}-\Dcl)(y)$, implying $y\in ((F^t)^{-1}-\Dcl)^{-1}(0)$;
the same argument for $z=0$ shows $0\in ((F^t)^{-1}-\Dcl)^{-1}(0)$, which contradicts the fact that $((F^t)^{-1}-\Dcl)^{-1}$
is single-valued. By 1.\ in Corollary~\ref{Cinv}, we infer $((F^t)^{-1}-\Dcl)^{-1}=\bm(I-\Dcl\bm)^{-1}$.

Hence, $\bm\star G$ is well-posed as a star-product of linear systems and \eqref{eq:algorithm_causal} is
\eql{alg2}{
x_{k+1}=(\Acl+\Bcl(I-\bm\Dcl)^{-1}\bm\Ccl)x_k,\ \ w_k=H\Ccl x_k,\ \ z_k=(I+\Dcl H)\Ccl x_k \; \forall k\in\N.
}
By convergence of $F^t\star G$ described by \eqref{eq:algorithm} with $F^t$ replacing $F$, we infer that 
$(x_k,w_k,z_k)$ converges to a fixed point $(x^*,w^*,z^*)$  for $k\to\infty$; clearly,
$(0,0,0)$ is a fixed point of the algorithm since $F^t(0)=0$; since fixed points are unique (Proposition~\ref{prop:well_posed_converge}), we infer $(x^*,w^*,z^*)=(0,0,0)$ and hence conclucde $\lim_{k\to\infty}x_k=0$.
Since \r{alg2} is another representation of the algorithm $F^t\star G$, we infer that all trajectories of \r{alg2}
satisfy $\lim_{k\to\infty}x_k=0$, which in turn shows that $\Acl+\Bcl(I-\bm\Dcl)^{-1}\bm\Ccl$ is Schur.

Moreover, $\bm\star P$ is well-posed by Assumption 2. Since $\bm\star G$ and
$P\star\Kc$ are well-posed, 
the matrices describing $\bm\star G=\bm\star(P\star\Kc)$ and $(\bm\star P)\star\Kc=P^\bm\star\Kc$ are identical; since $\Acl+\Bcl(I-\bm\Dcl)^{-1}\bm\Ccl$ is Schur, we conclude that
$\Kc$ internally stabilizes $P^\bm$.
Choose arbitrary $\beta^*\in\R^{c}$ and $\hat w^*\in\R^{(s-1)c}$ and define $F^t(z)=\bm z+b$ with
$b:=N\hat w^*-\bm(\1_s\otimes\beta^*)$. By construction, $\hat w^*$ is the unique vector satisfying
$F^t(\1_s\otimes\beta^*)-N\hat w^*=0$, which is in turn used in the definition \r{tF} of $\t F^t$.
Since $(\1_s\otimes I_c)^\top N=0$, we infer $(\1_s\otimes I_c)^\top\bm(\1_s\otimes\beta^*)+(\1_s\otimes I_c)^\top b=0$; hence $\beta^*$ is the unique solution to Problem~\ref{prob:composite}.
By convergence of $F^t\star (P\star \Kc)$ and hence of $(\tilde{F}^t\star P)\star\Kc=P^\bm\star\Kc$, we conclude that $\Kc$ achieves regulation for the plant $P^\bm$ described by \eqref{qua}.

As seen in Section \ref{app:regulation}, we conclude that there exits a solution $(\Pi, \Gamma, \Phi, \Theta)$ to
\begin{align}
\label{eq:regulator_equation_m}
(\Omega_0 + \Omega_\bm)
\mat{c}{\Pi\hl I\\\Gamma}&=\mat{c}{\Pi\hl 0\\\Phi}, &
\mat{c|c}{\Ac&\Bc\hl \Cc&\Dc}
\mat{c}{\Theta\\\Phi} &= \mat{c}{\Theta\\ \Gamma},
\end{align}
where we introduce the following abbreviations for the matrices describing $P^\bm$:
\begin{align}
    \label{eq:Omega_sys}
     \Omega_0 &:= \mat{c|cc:c}{ A &  0 & B_1 N  & B_2 \hl
    C_1  & \1_s \otimes I_c  & D_{1}N    &  D_{12} \hdl
    C_2 & 0 & D_{21}N  & D_2 }, \\
    \Omega_\bm &:= \mat{c}{B_1 \hl D_1 \hdl D_{21}} \bm E(\bm)^{-1} \mat{c|cc:c}{C_1 & \1_s \otimes I_c & D_{1} N  & D_{12}}.
    \end{align}
Since $I+D_1\bm E(\bm)^{-1}=E(\bm)^{-1}$, the middle block row of $\Omega_0+\Omega_\bm$ is
\begin{align}
    E(\bm)^{-1}\mat{c|cc:c}{C_1  &\1_s \otimes I_c & D_{1}N  &  D_{12}}.
\end{align}
Hence the middle block row of the first equation in \eqref{eq:regulator_equation_m} leads to
    \begin{align}
        E(\bm)^{-1}\mat{c|cc:c}{C_1  &\1_s \otimes I_c & D_{1}N  &  D_{12}} \mat{c}{\Pi \hl I \hdl \Gamma} = 0 & & & \text{and hence} &  \Omega_\bm  \mat{c}{\Pi \hl I \hdl \Gamma} = 0. \label{eq:cancel_implication}
    \end{align}
In view of the definition of $\Omega_0$, \eqref{eq:regulator_equation_m} is identical to  \eqref{eq:nominal_regulation_control_sys}, which concludes the proof.
}

\subsection{Proof of Sufficiency of Conditions 1 and 2 for Convergence}

{
Fix any $F \in \mathcal{O}_{m, L}$. By assumption, Problem~\ref{prob:composite} has a solution $\beta^*$ and
we can pick $\hat w^*$ with $0\in F(\1_s\otimes\beta^*)-N\hat w^*$. This permits to define the error map $\t F\in \mathcal{O}_{m, L}^0$ by \eqref{tF} and the corresponding error system \eqref{eq:regnonlinear}.

Since $I-D_2\Dc$ is invertible (Condition 1), $P_e\star\Kc$ and $P\star\Kc$ are both well-posed. Since $\t F\star (P\star\Kc)$ is well-posed (Condition 1), we infer that $\t F\star(P_e\star \Kc)$ and hence also
$F\star (P\star\Kc)$ is well-posed (Section~\ref{Serr}).

Let us now pick any trajectory of the algorithm $F\star (P\star\Kc)$ described by $w_k\in F(z_k)$ and \r{eq:PK}.
The signal transformation \eqref{errsig} generates a trajectory of the error system \eqref{eq:regnonlinear}.
Now pick a solution $(\Pi, \Gamma, \Phi, \Theta)$ of the regulator equations in Condition 2. The subsequent
signal transformation
\begin{align}\label{tra}
 \hat{x}_k^N &= x_k^N - \Pi d, & \hat{\xi}_k &= \xi_k - \Theta d, & \hat{y}_k &= y_k - \Phi d, & \hat{u}_k &= u_k - \Gamma d.
\end{align}
applied to \eqref{eq:regnonlinear} leads
with $d:=\col(-\beta^*,\hat w^*)$ to 
\begin{subequations}
\begin{align*}
    \mat{c}{
    \hat{x}^N_{k+1}  \hl \tilde{z}_k \hdl e_k \hdl \hat{y}_k
    } &= \mat{c|c:c:c}{ A & B_1 &(A-I) \Pi + B_2 \Gamma + \begin{pmatrix}
        0 & B_1 N
    \end{pmatrix} & B_2 \hl
    C_1 & D_1 & C_1 \Pi + D_{12} \Gamma + \begin{pmatrix}
    \1_s \otimes I_c & D_1 N
    \end{pmatrix} &  D_{12} \hdl
    C_1 & D_1 & C_1 \Pi + D_{12} \Gamma + \begin{pmatrix}
        \1_s \otimes I_c & D_1 N
    \end{pmatrix}  & D_{12} \\
    C_2 & D_{21} & C_2 \Pi + D_2\Gamma + \begin{pmatrix}
        0 & D_{21} N
    \end{pmatrix} - \Phi & D_2 }\mat{c}{
        \hat{x}^N_k \hl \tilde{w}_k \hdl d \hdl  \hat{u}_k
    },  \\
         \mat{c}{
        \hat{\xi}_{k+1}  \hl \hat{u}_k
    } &= \mat{c|c:c}{ \Ac & (\Ac-I) \Theta + \Bc \Phi & \Bc \hl
    \Cc & \Cc \Theta + \Dc \Phi - \Gamma & \Dc}\mat{c}{
        \hat{\xi}_k \hl d \hdl \hat{y}_k
    },\qquad \tilde{w}_k \in \tilde{F}(\tilde{z}_k).
    \end{align*}
\end{subequations}
Due to the regulator equations, this simplifies to
\label{eq:reg_nonlinear_zero}
\begin{align*}
    \mat{c}{
    \hat{x}^N_{k+1}  \hl \tilde{z}_k \hdl e_k \hdl \hat{y}_k
    } &= \mat{c|c:c:c}{ A & B_1 &0& B_2 \hl
    C_1 & D_1 & 0 &  D_{12} \hdl
    C_1 & D_1 &0  & D_{12} \\
    C_2 & D_{21} & 0 & D_2 }\mat{c}{
        \hat{x}^N_k \hl \tilde{w}_k \hdl d \hdl  \hat{u}_k
    }, &
         \mat{c}{
        \hat{\xi}_{k+1}  \hl \hat{u}_k
    } &= \mat{c|c:c}{ \Ac & 0 & \Bc \hl
    \Cc & 0 & \Dc}\mat{c}{
        \hat{\xi}_k \hl d \hdl \hat{y}_k
    }, 
    \end{align*}
with $\tilde{w}_k \in \tilde{F}(\tilde{z}_k)$, and hence, to
\begin{align}
\mat{c}{
    \hat{x}^N_{k+1}  \hl \tilde{z}_k \hdl \hat{y}_k
    } &= \mat{c|c:c}{ A & B_1 & B_2 \hl
    C_1 & D_1 &  D_{12} \hdl
    C_2 & D_{21} & D_2 }\mat{c}{
        \hat{x}^N_k \hl \tilde{w}_k \hdl  \hat{u}_k
    }, &
         \mat{c}{
        \hat{\xi}_{k+1}  \hl \hat{u}_k
    } &= \mat{c|c}{ \Ac & \Bc \hl
    \Cc &  \Dc}\mat{c}{
        \hat{\xi}_k \hl \hat{y}_k
    }. 
    \label{eq:reg_nonlinear_delta_zero}
\end{align}
By Condition 1, we conclude $\lim_{k\to\infty} \hat{x}_{k}^N=0$ and $\lim_{k\to\infty} \hat{\xi}_k=0$.

Note that \r{eq:reg_nonlinear_delta_zero} is a description of $\t F\star (P\star\Kc)$. Again with
$(\Acl,\Bcl,\Ccl,\Dcl)$ representing $G=P\star \Kc$, we obtain a trajectory of
\begin{align}
\label{eq:algorithm_zero}
   \hat{x}_k &= \mat{c}{\hat{x}^N_k \\ \hat{\xi}_k}, & \mat{c}{\hat{x}_{k+1}  \hl \tilde{z}_k}&=
        \mat{c|c}{ \mathcal{A} & \mathcal{B} \hl\mathcal{C} &  \mathcal{D}}\mat{c}{\hat{x}_k \hl \tilde{w}_k},  &   \tilde{w}_k \in \tilde{F}(\tilde{z}_k)
\end{align}
which satisfies $\lim_{k\to\infty}\hat{x}_k=0$. Since \eqref{eq:algorithm_zero} is well-posed
(because $\t{F}\star G=\t{F}\star (P\star\Kc)$ is) and has the fixed-point $(0,0,0)$ (since $0\in\t{F}(0)$),
Proposition \ref{prop:well_posed_converge} implies $\lim_{k\to\infty}(\hat{x}_k,\tilde{w}_k,\tilde{z}_k)=0$.
The definition of the error signals $(\tilde{w}, \tilde{z})$ in \r{errsig} {lets} us conclude that $\lim_{k\rightarrow \infty} z_k =   \1_s \otimes \beta^*$ and $\lim_{k\rightarrow \infty} w_k = N \hat{w}^*$.

On the one hand, we infer $\lim_{k\to\infty}(\hat{x}_k^N,\hat{\xi}_k)=0$,
and hence $\lim_{k\to\infty}(x_k^N,\xi_k)=(\Pi d,\Theta d)$ due to \eqref{tra}. On the other hand,
well-posedness of $P\star\Kc$ permits to infer $\lim_{k\to\infty}(\hat{u}_k,\hat{y}_k)=0$ (Section~\ref{Ssys})
which in turn gives $\lim_{k\to\infty}(u_k,y_k)=(\Gamma d,\Phi d)$ by \eqref{tra}. Since
$d=\col(-\beta^*,\hat w^*)$, this concludes the proof.
}

\section{Proof of Proposition \ref{prop:fixed_point_encoding}: Integrator Structures}
\label{app:fixed_point_encoding}

 \textbf{Range-space {condition}: } The range-space condition in \eqref{eq:encoding_range} can be recognized as a Kronecker-structured and sign-adjusted version of the {equation} \eqref{eq:direct_regulator}. If $\Theta^a$ solves the Kronecker structured instance of \eqref{eq:direct_regulator}, then \eqref{eq:encoding_range}  is certified  with
\begin{align}
      \mat{c}{I - \Ac^a\\ -\Cc^a } \left[ \Theta^a \mat{cc}{0 & 1 \\ -I_{s-1} & 0} \right]= \mat{cc}{0 &  \Bc^a N^a\\  \1_s & \Dc^a N^a}.
    \label{eq:direct_regulator_kron}
\end{align}

\textbf{Nullspace {condition}:} 
If $(\Ac-I, \Bc)$ has  full row rank,  {we have $\text{dim}( \nulls(\Ac-I \  \Bc)) = sc$. Due to the Kronecker structure, we infer} implies $\text{dim}( \nulls(\Ac^a-I \  \Bc^a)) = s$.
Letting $n_c$ be dimension of $\Ac^a$, we can partition and rearrange \eqref{eq:direct_regulator} in the Kronecker structured setting into
\begin{align}
    \mat{cc}{\Ac^a -I& \Bc^a \\ \Cc^a & \Dc^a} \mat{cc}{\Theta_1^a & \Theta_2^a \\ 0 & N^a} = \mat{cc}{0 & 0 \\ \1_s & 0}, \label{eq:kron_manipulate_reg}
\end{align}
in which $\Theta_1^a \in \R^{n_c \times 1}$ and $\Theta_2^a \in \R^{n_c \times (s-1)}$. Equation \eqref{eq:kron_manipulate_reg} {shows} $\Cc^a \Theta_1^a = \1_s$, which implies that $\text{rank}(\Theta_1^a) = 1$. The matrix $N^a \in \R^{s \times (s-1)}$ has full column rank because $N$ is a  Kronecker-structured consensus matrix (Definition \ref{defn:consensus}). We therefore have $text{dim}\left( \nulls {\mat{cc}{\Ac^a-I &  \Bc^a}}\right) = s,$
\begin{align}
    \text{dim} \left(\text{ran}  \mat{cc}{\Theta_1^a & \Theta_2^a \\ 0 & N^a} \right) &= s, & \text{and} & &   \text{ran}  \mat{cc}{\Theta_1^a & \Theta_2^a \\ 0 & N^a} \subseteq \nulls {\mat{cc}{\Ac^a-I &  \Bc^a}},
\end{align}
from which it follows that $\text{ran}  \mat{cc}{\Theta_1^a & \Theta_2^a \\ 0 & N^a} =\nulls 
{\mat{cc}{\Ac^a-I &  \Bc^a}}$.
We {continue} by noting that{
\begin{align}
   \mat{cc}{(N^a)^\top \Cc^a & (N^a)^\top \Dc^a \\
            0 & \1^\top_s} \mat{cc}{\Theta_1^a & \Theta_2^a \\ 0 & N^a} &=0
         \label{eq:null_property}
\end{align}
if using the regulator equation \eqref{eq:kron_manipulate_reg} and the relation $\1_s^\top N^a=0$.} 
Together, this implies {\eqref{eq:encoding_null} since}
\begin{align}
    \nulls \mat{cc}{\Ac^a-I &  \Bc^a} = \text{ran}\mat{cc}{\Theta_1^a & \Theta_2^a \\ 0 & N^a} \subseteq \mathrm{null}\mat{cc}{
            (N^a)^\top \Cc^a & (N^a)^\top \Dc^a \\
            0 & \1^\top_s}.
\end{align}

\section{Proof of Theorem \ref{thm:main_structure}: Algorithm Structure}

\label{app:structure_proof}

Assumption \ref{assum:disturbance_detec} separates the nullspaces of $\Phi$ and $\Gamma$ {as follows}.
\begin{lemma}
\label{lem:disjoint_null}
    Under Assumptions \ref{assum:well_posed}-\ref{assum:disturbance_detec}, any solution $(\Pi, \Gamma, \Phi)$ to the regulator  \\equation \eqref{eq:nominal_regulation_control_sys_plant} satisfies $\mathrm{null}(\Phi) \cap \mathrm{null}(\Gamma) = 0$.
\end{lemma}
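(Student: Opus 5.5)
Take $v\in\mathrm{null}(\Phi)\cap\mathrm{null}(\Gamma)$ and aim to show $v=0$ from the detectability in Assumption~\ref{assum:disturbance_detec}. The idea is that a disturbance direction invisible at both $u$ and $y$ gives an unobservable, marginally stable mode of the augmented pair $\bigl(\bigl(\begin{smallmatrix}A^\bm & B^\bm_1\\0&I\end{smallmatrix}\bigr),(C^\bm_2\ D^\bm_{21})\bigr)$ at eigenvalue $1$. That contradicts detectability unless the mode vanishes.

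First I would pass from the open-loop plant data to the $\bm$-closed plant $P^\bm$ in \eqref{qua}. As in the necessity proof of Theorem~\ref{thm:convergence} (see \eqref{eq:cancel_implication}), the middle block row of \eqref{eq:nominal_regulation_control_sys_plant} gives $C_1\Pi+(\1_s\otimes I_c\ \ D_1N)+D_{12}\Gamma=0$. Hence $\Omega_\bm(\Pi;I;\Gamma)=0$, so $(\Pi,\Gamma,\Phi)$ also solves $(\Omega_0+\Omega_\bm)(\Pi;I;\Gamma)=(\Pi;0;\Phi)$. Reading off the first and third block rows gives
\begin{align*}
A^\bm\Pi+B^\bm_1+B^\bm_2\Gamma=\Pi,\qquad C^\bm_2\Pi+D^\bm_{21}+D^\bm_2\Gamma=\Phi.
\end{align*}

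Next, multiply both relations on the right by $v$ and use $\Gamma v=0$, $\Phi v=0$. This yields $(A^\bm-I)\Pi v+B^\bm_1v=0$ and $C^\bm_2\Pi v+D^\bm_{21}v=0$. Therefore the vector $(\Pi v;v)$ satisfies
\begin{align*}
\mat{cc}{A^\bm-I & B^\bm_1\\ 0 & I-I\\ C^\bm_2 & D^\bm_{21}}\mat{c}{\Pi v\\ v}=0 .
\end{align*}
Thus $(\Pi v;v)$ lies in the kernel of the Hautus matrix of the augmented pair at $\lambda=1$. Since $|\lambda|=1$, detectability forces this kernel to be trivial. Hence $(\Pi v;v)=0$, and in particular $v=0$.

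The main obstacle is the bookkeeping in the first step: checking that the $\bm$-loop closure preserves the regulator equation without changing $\Gamma$ and $\Phi$. In particular, the third block row of $\Omega_\bm$ contributes $D_{21}\bm E(\bm)^{-1}$ times the vanishing middle row. I would verify this using $I+D_1\bm E(\bm)^{-1}=E(\bm)^{-1}$, exactly as in the appendix proof. Assumptions~\ref{assum:well_posed}--\ref{assum:stab_dect} are used only so that $P^\bm$ is defined; Assumption~\ref{assum:regulator} is not needed for this lemma.
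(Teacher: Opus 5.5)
Your proposal is correct and is essentially the paper's argument. The paper first applies the coordinate change $\hat x^N = x^N - \Pi d$, which tacitly uses that $(\Pi,\Gamma,\Phi)$ also solves the regulator equation for $P^\bm$ (the step you make explicit). It then tests the Hautus matrix at $\lambda = 1$ on the vector $(0;v)$, which is exactly your $(\Pi v; v)$ in the original coordinates. Two small remarks: the identity $I + D_1\bm E(\bm)^{-1} = E(\bm)^{-1}$ is not actually needed in this direction, and only Assumption~\ref{assum:well_posed}, not Assumption~\ref{assum:stab_dect}, is required to define $P^\bm$.
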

\begin{proof}
    Applying {the transformation $\hat{x}_{k}^N := x_k^N - \Pi d$ to the system \r{qua} yields
    \begin{align}
    \mat{c}{
    \hat{x}^N_{k+1}  \\ d \hl  \tilde{y}_k
    } \!=\! \left[ \! \mat{c:c}{ A &  -B_2 \Gamma   \hdl
     0 & I_{sc}    \hl
    C_2 & \Phi - D_{2} \Gamma } \!\!+\!\!  \mat{c}{B_1 \hdl 0  \hdl D_{21}} \bm E(\bm)^{-1} \!\! \mat{c:c}{C_1 & -D_{12} \Gamma} \! \right] \!\! \mat{c}{
        \hat{x}^N_k \\ d
    }\!, \label{eq:formed_by_interconnection_exo_close}
\end{align}
which is detectable by by Assumption \ref{assum:disturbance_detec}. Hence,
     \begin{align}
 \mat{cc}{A   + B_1 \bm E(\bm)^{-1} C_1 - I & -(B_2 + B_1 \bm E(\bm)^{-1} D_{12}) \Gamma \\
     0 & 0_{sc} \hl
     C_2 + D_{21} \bm E(\bm)^{-1} C_1 & \Phi - (D_{22} + D_{21} \bm E(\bm)^{-1} D_{12})\Gamma} \label{eq:detectable_full_rank}
\end{align}
has full column rank, which} implies that
\begin{align}
    \mathrm{null}(\Phi) \cap \mathrm{null}\left[\mat{c}{B_2 + B_1 \bm E(\bm)^{-1} D_{12} \\ D_{22} + D_{21} \bm E(\bm)^{-1} D_{12}}\Gamma\right] = \{0\}. \label{eq:null_large}
\end{align}

Since $\text{null}(\Gamma) \subseteq \nulls(M \Gamma)$ for any matrix $M$, we {conclude}
 $\mathrm{null}(\Phi) \cap \mathrm{null}(\Gamma) = \{0\}$.
\end{proof}

\begin{lemma}
\label{lem:theta_injective}
    {
    Under Assumptions \ref{assum:well_posed}-\ref{assum:disturbance_detec}, for any solution $(\Pi, \Gamma, \Phi, \Theta)$ of \eqref{eq:nominal_regulation_control_sys} and any invertible matrix  $R = (R_1 \ R_2)$ with $\text{ran} (R_1) \!=\! \nulls(\Phi)$, the matrices $(\Theta_1 \ \Theta_2)\!:=\!\Theta R$ and
    $(\Gamma_1 \ \Gamma_2  )\!:=\!\Theta R$}
    satisfy $\text{rank}(\Theta_1) \!=\!  \text{rank}(\Gamma_1) \!=\! \text{dim}(\nulls(\Phi))$.
\end{lemma}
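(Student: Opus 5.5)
Note first that the statement defines $(\Gamma_1\ \Gamma_2)$ by $\Theta R$, which is a typo; we read it as $(\Gamma_1\ \Gamma_2):=\Gamma R$, consistent with \eqref{eq:basis_transform}. Set $r:=\dim(\nulls(\Phi))$. Since $\mathrm{ran}(R_1)=\nulls(\Phi)$ and $R$ is invertible, $R_1$ has exactly $r$ columns and full column rank. Hence both $\mathrm{rank}(\Theta_1)\le r$ and $\mathrm{rank}(\Gamma_1)\le r$ hold trivially, and only the lower bounds need proof.

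For $\Gamma_1=\Gamma R_1$, suppose $\Gamma R_1 v=0$. Then $R_1 v\in\nulls(\Gamma)$, and also $R_1v\in\mathrm{ran}(R_1)=\nulls(\Phi)$. Lemma~\ref{lem:disjoint_null} gives $\nulls(\Phi)\cap\nulls(\Gamma)=\{0\}$ under Assumptions~\ref{assum:well_posed}--\ref{assum:disturbance_detec}, so $R_1v=0$. Injectivity of $R_1$ then yields $v=0$. Thus $\Gamma_1$ has full column rank, and $\mathrm{rank}(\Gamma_1)=r$.

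For $\Theta_1=\Theta R_1$, we use the controller regulator equation \eqref{eq:nominal_regulation_control_sys_control}. Its bottom row reads $\Cc\Theta+\Dc\Phi=\Gamma$. Right-multiplying by $R_1$ and using $\Phi R_1=0$ gives $\Cc\Theta_1=\Gamma_1$. Therefore
\[
r=\mathrm{rank}(\Gamma_1)=\mathrm{rank}(\Cc\Theta_1)\le\mathrm{rank}(\Theta_1)\le r,
\]
which forces $\mathrm{rank}(\Theta_1)=r$.

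The only nontrivial ingredient is the nullspace separation in Lemma~\ref{lem:disjoint_null}, which is taken as given. The remaining care point is that the identity $\Cc\Theta_1=\Gamma_1$ uses the controller equation exactly as stated, with $\Phi R_1=0$ holding by the choice of $R_1$. No assumption on minimality of $\Kc$ is needed for this lemma.
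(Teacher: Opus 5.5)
Your proposal is correct and follows essentially the same route as the paper. Full column rank of $\Gamma_1$ comes from the nullspace separation in Lemma~\ref{lem:disjoint_null} together with injectivity of $R_1$. Then $\Cc\Theta_1=\Gamma_1$, obtained from the bottom row of \eqref{eq:nominal_regulation_control_sys_control} using $\Phi R_1=0$, forces $\mathrm{rank}(\Theta_1)=\dim(\nulls(\Phi))$. Your reading of the typo as $(\Gamma_1\ \Gamma_2):=\Gamma R$ is also the definition the paper's proof actually uses.
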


\begin{proof}
We prove this Lemma by contradiction.

\textbf{$\mathbf{\Gamma_1}$ condition:}
If $v \neq0$ is a vector such that $\Gamma_1 v = 0$, then $\Gamma R_1 v = 0$. Because $\text{ran}(R_1) = \nulls(\Phi)$, it holds that $\Phi R_1 v = 0$, which implies that $R_1 v \in \nulls(\Phi) \cap \nulls(\Gamma)$. By Lemma \ref{lem:disjoint_null}, we have $\nulls(\Phi) \cap \nulls(\Gamma) = \{0\}$, so $R_1 v = 0$. Since $R_1$ has full column rank, $R_1 v = 0$ {implies $v = 0$, which contradicts $v \neq0$.} Consequently, $\Gamma_1$ has full column rank, {which implies 
$\text{rank}(\Gamma_1) = \text{dim}(\nulls(\Phi))$.}

\textbf{$\mathbf{\Theta_1}$  condition:} We {post-multiply} both sides of the bottom equation in \eqref{eq:nominal_regulation_control_sys_control}:
\begin{align}
    \Gamma R_1 &= \Cc \Theta R_1 + \Dc \Phi R_1&\te{{which reads}} & \Gamma_1 = \Cc \Theta_1 + 0 = \Cc \Theta_1. \label{eq:reg_relation}
\end{align}
Hence ${\text{dim}(\nulls(\Phi))=\rank{\Gamma_1} = \rank{\Cc \Theta_1} = } \text{rank}(\Theta_1) - \text{dim}(\text{ran}(\Theta_1) \cap \nulls(\Cc)) \leq \text{rank}(\Theta_1)$.  Because $\Theta_1$ has $\text{dim}(\nulls(\Phi))$ columns, {we infer that} $\Theta_1$ has full rank.

\end{proof}

We now isolate structural properties of $\Kc$ {in} convergent optimization algorithms $F \star (P \star \Kc)$.
By Lemmas \ref{lem:disjoint_null} and  \ref{lem:theta_injective}, there exists an invertible matrix $Q$ such that
\begin{align}
    Q^{-1} \Theta = \mat{ccc}{-I_r & \Theta_{12}  \\ 0 & \Theta_{22}  }.
\end{align}

\section{Proof of Theorem \ref{thm:analysis}}
\label{app:analysis_proof}
Due to Corollary~\ref{cor:exp_stable}, it suffices to prove that \r{eq:G_exp} is Lyapunov stable.

Since the `strictly proper terms of $P$ and $\bar P$ coincide, we observe that
$\bar P$ satisfies Assumption 2 and $\bar G=\bar P\star\bar \Kc$ is well-posed. Hence
$I-\Dcl\bm$ is invertible and, therefore, $\Sigma_{m,L}\star \bar G$ is well-posed, having the ``D''-matrix $-\bsig I + \Dcl(I-\bm \Dcl)^{-1}$. Since the ``D''-matrix of the state-space representation of $\Psi(\blam)$ in \r{fil} is $\La:=\diag(\lambda^1_0I_c,\ldots\lambda^s_0I_c)$, we conclude that
$\hat{\Dcl}(\blam)=\Blam (-\bsig I + \Dcl(I-\bm \Dcl)^{-1})$. Now note that \eqref{eq:antipassivity_analysis} implies
\begin{align}
    \hat{\Dcl}(\blam) + \hat{\Dcl}(\blam)^\top \prec -\hat{\mathcal{B}}^\top \mathcal{M} \hat{\mathcal{B}} \preceq 0.\label{eq:neg_relation}
\end{align}
Hence
$\mathrm{Sym}[\Blam (-\bsig I + \Dcl(I-\bm \Dcl)^{-1})]\cl 0$.
In view of Condition 1, we conclude that \r{wpc} is satisfied, which implies that $(F^{-1}-\Dcl)^{-1}$ is globally continuous for all $F\in\c{O}_{m,L}$ by Lemma~\ref{Lwp}. For $\t{F}\in\c{O}_{m,L}^0$,
global continuity of $(\t{F}^{-1}-\Dcl)^{-1}$ implies global continuity of
$(\rho^{-k}(\t{F}(\rho^k .)^{-1}-\Dcl)^{-1}$ for every $k\in\N$ by Proposition~\ref{cor:well_posed_exp},
which in turn guarantees that \r{eq:G_exp} is well-posed. By following routine dissipation arguments \cite{scherer2023optimization,scherer2025tutorial}, the LMI \eqref{eq:antipassivity_analysis} guarantees
that \r{eq:G_exp} is Lyapunov stable.

\section{Full-Order Synthesis of Optimization Algorithms}

\label{app:synthesis}

The $\rho$-weighting of the full-order internal model $\Sigma_{\text{full}}$ is
\begin{align}
    \bar{\Sigma}_{\text{full}}: \qquad \mat{c|c:cc}{\rho^{-1} I_{sc} & 0 & \rho^{-1} I_{sc} & 0 \hl
    -\Gamma & 0 & 0 & I_{n_u} \hdl
    \Phi & I_{n_y} & 0 & 0 }.
\end{align}
{{Then} the interconnection $\bar{P} \star \bar{\Sigma}_{\text{full}}$ has the representation %
\begin{align}
\label{eq:partition_full}
  \bar{P} \star \bar{\Sigma}_{\text{full}}: \qquad \mat{c:c|c:cc}{
  \rho^{-1}A & -\rho^{-1} B_2 \Gamma & \rho^{-1} B_1 &  0 & \rho^{-1} B_2 \hdl
  0 & {\rho^{-1}}I_{cs} & 0 & {\rho^{-1}}I_{cs} & 0 \hl
  C_1 & -D_{12} \Gamma & D_1 & 0 & D_{12} \\
  C_2 & {-D_{22}\Phi} & D_{21} & 0 & D_{2}  }.
\end{align}
{We compute a state-space description of
$\hat{P}(\blam) = \Psi(\blam)(\Sigma_{m, L} \star \bar{P} \star \bar{\Sigma}_{\text{full}})$ denoted as
\begin{align}
    \hat{P}(\blam): \quad \mat{c}{\eta_{k+1} \hl \bar{r}_k \hdl  \bar{\tilde{y}}_k} &= \mat{c|c:c}{
    \hat{A}(\blam) &  \hat{B}_1(\blam) &  \hat{B}_2(\blam) \hl
    \hat{C}_1(\blam) & \hat{D}_1(\blam)   & \hat{D}_{12}(\blam) \hdl
    \hat{C}_2(\blam) & \hat{D}_{21}(\blam)  & \hat{D}_2(\blam)
    } \mat{c}{\eta_k \hl \bar{q}_k \hdl \bar{\tilde{u}}_k}. \label{eq:synth_system_hat_full}
\end{align}
}

For fixed filter coefficients $\blam$, the goal is to design  $\cf{{\Sigma}_{\text{f}}}$ such that
$\hat{P}(\blam)\star \cf{\bar{\Sigma}_{\text{f}}}$ satisfies the properties in  Problem~\ref{prob:synth_step}. {To convexify this problem, we follow the nonlinear convexification procedure for controller synthesis in \cite{scherer1997multiobjective}.
It only remains to clarify how to handle the fact that $\hat{D}_2(\blam) = D_2$ does not vanish and that the constraint
$\cf{D_f}_2 \in \mathcal{L}^D$ needs to be satisfied.
 
We first separate the direct feedthrough matrix $\hat{D}_{2}(\blam)$ from the plant $\hat{P}(\blam)$ using a linear fractional transformation, thus forming a control synthesis task with respect to a plant $\cf{\Sigma}^0_f$ (Figure \ref{fig:direct_feedthrough_syn}).
We  achieve condition 1 in
Problem~\ref{prob:synth_step} for the interconnection  on the right in Figure \ref{fig:direct_feedthrough}, by making sure that
the designed controller satisfies $\Dcf_2^0 \in \mathcal{L}^D$. Since $\mathcal{L}^D$ is assumed to be convex, this constraint can be taken into account in the transformation approach to controller design as proposed in \cite{scherer1997multiobjective}. In our presented examples with block-sparsity patterns, the set $\mathcal{L}^D$ is representable by a finite number of linear equality constraints in the entries of $\Dcf_2^0$.

{If non-zero, the matrix $\hat{D}_2(\blam)$ is} brought back in by choosing the controller on the left in
Figure \ref{fig:direct_feedthrough} such that both closed-loop systems coincide. This is achieved for
$$
\mas{c|c}{\Acf & \Bcf \hl \Ccf & \Dcf}= \mat{cc}{0 & I \\ 0 & -\hat{D}_{2}(\blam)}  \star \mas{c|c}{\Acf^0 & \Bcf^0 \hl \Ccf^0 & \Dcf^0}=
 \mas{c|c}{\Acf^0 - \Bcf^0 \es^{-1} \hat{D}_2 \Dcf^0 & \Bcf^0\es^{-1} \hl (I - \Dcf^0 \es^{-1} \hat{D}_{2})\Ccf^0 & \Dcf^0 \es^{-1}}
$$
where $\es = I + \hat{D}_2(\blam) \Dcf^0$.  A slight numerical perturbation of $\hat{D}_2(\blam)$
may be required to render $\es$ invertible,  and invertibility of $\es$  ensures that the star product {is} well-posed. Due to
Assumption \ref{assum:quad_invariance}, we infer that $\Dcf = \Dcf^0(I + \hat{D}_2 \Dcf^0)^{-1}\in\linfo$ is satisfied. Thus, the constructed controller will satisfy Conditions 1 and 2 in
Problem~\ref{prob:synth_step}.
}
\begin{figure}[h]
    \centering
    \begin{subfigure}{0.48\linewidth}
    \center 
    \begin{tikzpicture}[xscale=1,yscale=1,baseline=(ko1)]
\def\dl{2*\dn}
\def\ds{3*\dn}
\node[sy3] (ghat) at (0,0)  {
$
\setlength{\arraycolsep}{2pt}
\mas{c|c:cc}{
\hat{A}(\blam) &  \hat{B}_1(\blam) &  \hat{B}_2(\blam) \hl   
    \hat{C}_1(\blam) & \hat{D}_1(\blam)   & \hat{D}_{12}(\blam) \hdl
    \hat{C}_2(\blam) & \hat{D}_{21}(\blam)  & 0}
$};
\tio{i1}{ghat}{east}{2/3};
\tio{o1}{ghat}{west}{2/3};
\tio{i2}{ghat}{east}{1/3};
\tio{o2}{ghat}{west}{1/3};
\draw[->] (o1)--  node[swap]{$\bar{r}$} ($(o1) + (-2*\ds, 0)$)  ;
\draw[<-] (i1)-- node[]{$\bar{q}$} ($(i1) + (2*\ds, 0)$) ;

\node[sy3,below=0.8\dl of ghat] (lft) {
$
\mat{cc}{0 & I \\ I & \hat{D}_{2}(\blam)}
$
};

\tio{ilft1}{lft}{east}{2/3};
\tio{olft1}{lft}{west}{2/3};
\tio{ilft2}{lft}{east}{1/3};
\tio{olft2}{lft}{west}{1/3};

\node[sy3,below=0.8\dl of lft] (khat) {
$
\mas{c|c}{\rho^{-1} \Acf & \rho^{-1}\Bcf \hl
\Ccf_1 & \Dcf_1  \\ \Ccf_2 & \Dcf_2 }
$
};

\tio{ki1}{khat}{east}{1/2};
\tio{ko1}{khat}{west}{1/2};
\draw[<-] (olft1)--   ($(olft1) + (-2.8*\ds, 0)$)  |- node[pos=.25]{$\bar{\t y}_0$} (o2) ;
\draw[->] (ilft1)--  ($(ilft1) + (2.8*\ds, 0)$) |- node[pos=.25,swap]{$\bar{\t u}_0$} (i2);

\draw[<-] (ko1)--   ($(ko1) + (-1*\ds, 0)$)  |- node[pos=.25]{$\bar{\t y}$} (olft2) ;
\draw[->] (ki1)--  ($(ki1) + (1*\ds, 0)$) |- node[pos=.25,swap]{$\bar{\t u}$} (ilft2);
\end{tikzpicture}
    \caption{Separation of feedthrough}
    \label{fig:direct_feedthrough_sep}
    \end{subfigure}$\quad$
    \begin{subfigure}{0.48\linewidth}
    \center 
    \begin{tikzpicture}[xscale=1,yscale=1,baseline=(ko1)]
\def\dl{2*\dn}
\def\ds{3*\dn}
\node[sy3] (ghat) at (0,0)  {
$
\setlength{\arraycolsep}{2pt}
\mas{c|c:cc}{
\hat{A}(\blam) &  \hat{B}_1(\blam) &  \hat{B}_2(\blam) \hl   
    \hat{C}_1(\blam) & \hat{D}_1(\blam)   & \hat{D}_{12}(\blam) \hdl
    \hat{C}_2(\blam) & \hat{D}_{21}(\blam)  & 0}
$};
\tio{i1}{ghat}{east}{2/3};
\tio{o1}{ghat}{west}{2/3};
\tio{i2}{ghat}{east}{1/3};
\tio{o2}{ghat}{west}{1/3};
\draw[->] (o1)--  node[swap]{$\bar{r}$} ($(o1) + (-2*\ds, 0)$)  ;
\draw[<-] (i1)-- node[]{$\bar{q}$} ($(i1) + (2*\ds, 0)$) ;

\node[sy3,below=\dl of ghat] (khat) {
$
\mas{c|c}{\rho^{-1} \Acf^0 & \rho^{-1}\Bcf^0 \hl
\Ccf_1^0 & \Dcf_1^0  \\ \Ccf_2^0 & \Dcf_2^0 }
$
};

\tio{ki1}{khat}{east}{1/2};
\tio{ko1}{khat}{west}{1/2};
\draw[<-] (ko1)--   ($(ko1) + (-1.5*\ds, 0)$)  |- node[pos=.25]{$\bar{\t y}_0$} (o2) ;
\draw[->] (ki1)--  ($(ki1) + (1.5*\ds, 0)$) |- node[pos=.25,swap]{$\bar{\t u}_0$} (i2);
\end{tikzpicture}    
    \caption{{Modified configuration for } synthesis}
\label{fig:direct_feedthrough_syn}
    \vfill
    \end{subfigure}
    \vspace{-8mm}
    \caption{Removal of direct feedthrough by interconnection}
    \label{fig:direct_feedthrough}
    \vspace{-5mm}
\end{figure}
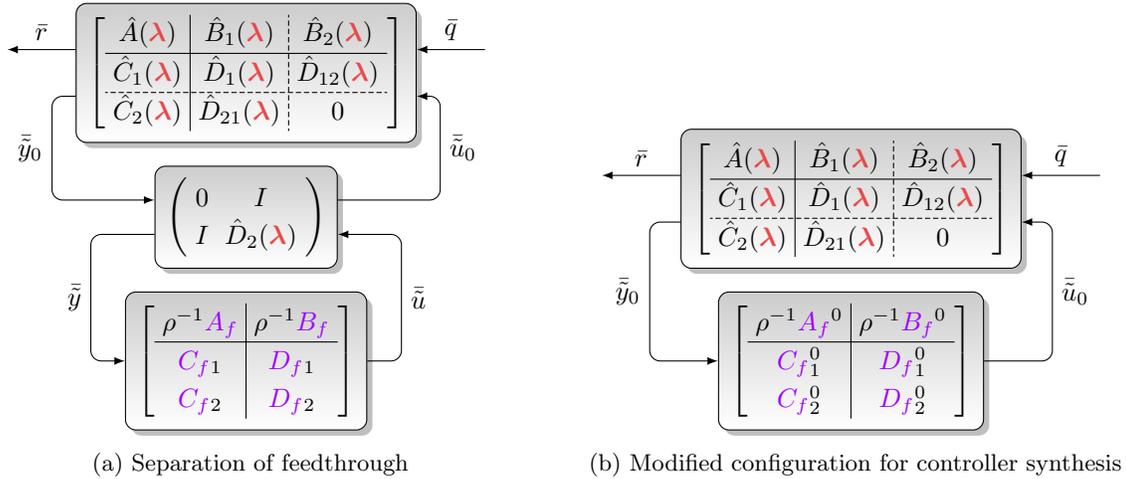

If the {network model and the} algorithm all have Kronecker structure,
then we perform synthesis for dimension $c=c'$ by performing algorithm design for $c=1$, and then applying a Kronecker product 
to all matrices in the controller representation $\Kc$.

\bibliographystyle{siamplain}
\bibliography{reference}
\end{document}